\documentclass[11pt,reqno,twoside]{amsart}
\usepackage{graphicx}
\usepackage{amssymb}
\usepackage{epstopdf}
\usepackage[asymmetric,top=2.5cm,bottom=2.8cm,left=2.38cm,right=2.38cm]{geometry}
\usepackage{hyperref}
\usepackage{booktabs} 
\usepackage{array}
\usepackage{paralist} 
\usepackage{verbatim} 
\usepackage{subfig}
\usepackage{tabularx}
\usepackage{amsmath,amsfonts,amsthm,mathrsfs,amssymb,cite}
\usepackage{algorithm}
\usepackage{algpseudocode}
\usepackage[usenames]{color}
\usepackage{bm}
\usepackage{color}

\usepackage{xcolor}
\usepackage{subcaption}
\graphicspath{ {./picture/} } 
\hypersetup{
	colorlinks,
	linkcolor={blue!80!black},
	urlcolor={blue!80!black},
	citecolor={blue!80!black}
}

\newtheorem{thm}{Theorem}[section]

\newtheorem{lem}{Lemma}[section]

\theoremstyle{definition}
\newtheorem{defn}{Definition}[section]
\newtheorem{example}{Example}[section]
\theoremstyle{remark}

\newtheorem{rem}{Remark}[section]
\numberwithin{equation}{section}

\newcommand{\bmf}[1]{{\mathbf{#1}}}

\usepackage{xstring} 

\newcommand{\AuthorInfo}[4]{ \textsc{#1} \IfStrEq{#4}{true}{$^{*}$}{}\\ #2\\ \textit{E-mail:} \texttt{#3} \IfStrEq{#4}{true}{\\\textit{$^{*}$Corresponding author.}}{} \par\vspace{0.5em} }

\usepackage{authblk}

\title[Monotonicity spectral sampling methods]{Robust shape reconstruction of elastic impenetrable scatterers via monotonicity spectral sampling methods}

\date{} 
\begin{document}
	\maketitle
	
	\begin{center}
		\bigskip
		\footnotesize
		
		\AuthorInfo{Mengjiao Bai}{School of Mathematics, Jilin University,
			Changchun, Jilin 130012, China.}{{baimj24@mails.jlu.edu.cn}}{false}
		
		\AuthorInfo{Huaian Diao}{School of Mathematics and Key Laboratory of Symbolic Computation and Knowledge Engineering of Ministry of Education, Jilin University, Changchun, Jilin, China.}{diao@jlu.edu.cn, hadiao@gmail.com}{true}
		
		\AuthorInfo{Weisheng Zhou}{School of Mathematics, Jilin University,
			Changchun, Jilin 130012, China.}{zhouws24@mails.jlu.edu.cn}{false}
	\end{center}

	\normalsize

\begin{abstract}	
Reconstructing the location and shape of an unknown impenetrable scatterer from far-field measurements is a fundamental inverse problem in elastic scattering. In this paper, we propose monotonicity-based shape characterization theorems and develop corresponding algorithms for rigid and traction-free impenetrable scatterers. By establishing the factorization of the elastic far-field operator and constructing localized wave functions, we derive a sharp monotonicity-based characterization criterion for determining the shape and position of the impenetrable scatterer. This criterion is based on the spectral properties of the \emph{monotonicity operator}, defined as a specific linear combination of the far-field and Herglotz probing operators.
Building on this theoretical foundation, we first present a counting-based monotonicity sampling method that evaluates the number of negative eigenvalues of the monotonicity operator. To address the inherent sensitivity of eigenvalue-counting to measurement noise, we further develop two novel monotonicity spectral sampling algorithms that exploit the magnitudes, rather than merely the signs, of the negative eigenvalues. The single-frequency monotonicity spectral sampling method provides robust stability against data perturbations, while the multi-frequency monotonicity spectral sampling method extension aggregates frequency information into a multiscale indicator that balances noise robustness with high-resolution geometric fidelity. Numerical experiments across various scatterer geometries and noise levels demonstrate sharp boundary localization and accurate reconstruction of complex concave features, confirming the effectiveness of the single-frequency and multi-frequency monotonicity spectral sampling methods.

\medskip
	\noindent{\bf Keywords:}~~ Inverse elastic impenetrable  scattering; Rigid impenetrable scatterer; Traction-free  impenetrable scatterer; Monotonicity method; Localized wave functions; Spectral sampling method.
    
		\medskip
	\noindent{\bf 2020 Mathematics Subject Classification:}~~ 35R30, 35P25, 45Q05, 74B05, 78A45, 65N21

	\end{abstract}

\section{Introduction}
	
\subsection{Mathematical setup}

Determining the location and shape of an unknown scatterer from measurements of the scattered wave field far away from the region of interest is a fundamental inverse problem in wave propagation. This challenge is central to numerous scientific and engineering fields, ranging from \emph{seismic imaging} for geophysical exploration and crustal monitoring to the \emph{high-precision nondestructive evaluation} (NDE) of aerospace components and civil infrastructures \cite{OB04,ABGKLW15}. In elastic scattering, this problem is particularly challenging due to the coexistence of compressional (P-) and shear (S-) wave modes, which travel at different velocities and interact with the scatterer's geometry in distinct ways.  The synergistic use of these complementary modes is crucial for resolving fine-scale geometric features that might be invisible to a single wave type, yet it requires a robust mathematical framework to decouple and interpret the multimodality of the elastic far-field data. 

This paper addresses the inverse problem of reconstructing a two-dimensional impenetrable elastic scatterer $D$ from far-field measurements under two boundary conditions. To this end, we consider a bounded Lipschitz domain $D \subset \mathbb{R}^2$ with a connected exterior, embedded in a homogeneous isotropic elastic background characterized by the Lam\'{e} constants $\lambda$ and $\mu$ satisfying the \emph{strong ellipticity conditions}:
\begin{equation}\notag
	\mu > 0 \quad 	\text{and} \quad \lambda + \mu > 0.
\end{equation} 
Without loss of generality, we assume the background homogeneous  medium has a normalized unit density. The impenetrable scatterer $D$ is illuminated by an incident time-harmonic elastic plane wave $\mathbf{u}^i$, defined as a superposition of a compressional (P-) wave and a shear (S-) wave:
\begin{equation}\label{eq:ui}
	\mathbf{u}^i(\mathbf{x}) = a_p \mathbf{d} e^{\mathrm{i} k_p \mathbf{d} \cdot \mathbf{x}} + a_s \mathbf{d}^\perp e^{\mathrm{i} k_s \mathbf{d} \cdot \mathbf{x}},
\end{equation}
where $\mathbf{d}\in\mathbb{S}$ denotes the propagation direction, $\mathbf{d}^\perp$ its orthogonal direction, $a_p,a_s\in\mathbb{C}$ are complex amplitudes satisfying $|a_p|+|a_s|>0$, and the wavenumbers are defined by
\begin{equation}\notag
	k_p = \frac{\omega}{\sqrt{\lambda + 2\mu}}, \quad k_s = \frac{\omega}{\sqrt{\mu}},
\end{equation}
with $\omega>0$ the angular frequency. The incident field $\mathbf{u}^i$ satisfies the Navier equation in $\mathbb{R}^2$:
\begin{equation}\notag
	\Delta^*\mathbf{u}^i + \omega^2 \mathbf{u}^i = \mathbf{0}.
\end{equation}
where \(\Delta^*\mathbf{u}^i:= \mu \Delta \mathbf{u}^i + (\lambda + \mu) \nabla( \nabla \cdot \mathbf{u}^i)\). The interaction of the incident wave $\mathbf{u}^i$ with the impenetrable scatterer $D$ gives rise to a scattered field $\mathbf{u}$, which satisfies the Navier equation in the exterior domain
\begin{equation}\label{eq:syst1}
	\Delta^* \mathbf{u} + \omega^2 \mathbf{u} = \mathbf{0}
	\quad \text{in } \mathbb{R}^2 \setminus \overline{D}.
\end{equation}

The total field is defined by
$$
\mathbf{u}^{\mathrm{tot}} := \mathbf{u}^i + \mathbf{u},
$$
and is required to satisfy the boundary condition
\begin{equation}\label{eq:BC}
	\mathcal{B} \mathbf{u}^{\mathrm{tot}} = \mathbf{0}
	\quad \text{on } \partial D,
\end{equation}
where $\mathcal{B}$ denotes the boundary operator. In this paper, we consider the following two types of boundary conditions imposed on the impenetrable scatterer:

\begin{itemize}
	\item[(1)] Dirichlet boundary condition
	\begin{equation}\label{eq:DirichletBC}
		\mathbf{u}^{\mathrm{tot}} = \mathbf{0}
		\quad \text{on } \partial D.
	\end{equation}
	In this case, $D$ is referred to as a \emph{rigid impenetrable scatterer or obstacle}. Physically, this condition characterizes a hard inclusion or a second-phase particle embedded within an elastic matrix, where the high stiffness of the inclusion relative to the background leads to vanishing total displacement at the interface.
	
	\item[(2)] Neumann boundary condition
	\begin{equation}\label{eq:NeumannBC}
		\mathbf{T}_{\mathbf{\nu}} \mathbf{u}^{\mathrm{tot}} = \mathbf{0}
		\quad \text{on } \partial D.
	\end{equation}
	Here, the traction operator $\mathbf{T}_{\mathbf{\nu}}$ is defined by
	$$
	\mathbf{T}_{\mathbf{\nu}} \mathbf{u}^{\mathrm{tot}}
	:=
	2\mu \partial_{\mathbf{\nu}} \mathbf{u}^{\mathrm{tot}}
	+ \lambda \mathbf{\nu} \nabla \cdot \mathbf{u}^{\mathrm{tot}}
	- \mu \mathbf{\tau} \, \mathrm{curl}\,\mathbf{u}^{\mathrm{tot}},
	$$
	where
	$$
	\mathrm{curl}\,\mathbf{u}^{\mathrm{tot}}
	=
	\mathrm{curl}\,(u_1^{\mathrm{tot}},u_2^{\mathrm{tot}})^\top
	:=
	\partial_1 u_2^{\mathrm{tot}} - \partial_2 u_1^{\mathrm{tot}},
	$$
    and $\bm{\nu}=(\nu_1,\nu_2)^{\top}$ denote the unit outward normal vector to $\partial D$. 
	From a physical standpoint, this boundary condition models the scattering of elastic waves by structural defects such as cracks, voids, or internal delaminations. In these scenarios, the vanishing of surface traction causes the boundary to act as a perfect reflector of elastic wave energy; hence, $D$ is referred to as a \emph{traction-free impenetrable scatterer or cavity}.
\end{itemize}

The scattered field $\mathbf{u}$ can be decomposed into its compressional and shear components via the Helmholtz decomposition:
\begin{equation}\notag
    \mathbf{u} = \mathbf{u}_p + \mathbf{u}_s,
\end{equation}
where the compressional (longitudinal) part $\mathbf{u}_p$ and the shear (transverse) part $\mathbf{u}_s$ are defined respectively by
\begin{equation}\notag 
    \mathbf{u}_p = -\frac{1}{k_p^2} 
\nabla ( 
\nabla \cdot \mathbf{u} ), \quad \mathbf{u}_s = \frac{1}{k_s^2} \mathbf{curl} ( \mathrm{curl} \, \mathbf{u} ).
\end{equation}
Here, the two-dimensional scalar curl of a vector field $\mathbf{w} = (w_1, w_2)^\top$ is defined as $\mathrm{curl} \, \mathbf{w} := \partial_1 w_2 - \partial_2 w_1$, while the vector curl of a scalar function $w$ is defined as $\mathbf{curl} \, w := (\partial_2 w, -\partial_1 w)^\top$.

To guarantee the uniqueness of the solution to \eqref{eq:syst1} and to ensure that the scattered wave propagates strictly outward to infinity, the scattered field $\mathbf{u}$ is required to be outgoing, satisfying the Kupradze radiation condition:
\begin{equation}\label{eq:RadiationCondition}
	\lim_{r \to \infty} r^{1/2} \left( \frac{\partial \mathbf{u}_p}{\partial r} - {i} k_p \mathbf{u}_p \right) = 0,
	\quad
	\lim_{r \to \infty} r^{1/2} \left( \frac{\partial \mathbf{u}_s}{\partial r} - {i} k_s \mathbf{u}_s \right) = 0,
\end{equation}
where $i=\sqrt{-1}$ and $r:=|\mathbf{x}|$.

Given an incident field $\mathbf{u}^i$ and an impenetrable scatterer $D$, the direct (forward) scattering problem is to find the scattered field $\mathbf{u}$ satisfying the governing Navier equation \eqref{eq:syst1}, the boundary condition \eqref{eq:BC}, and the Kupradze radiation condition \eqref{eq:RadiationCondition}. Under these conditions, the direct scattering problem is well-posed; indeed, it admits a unique weak solution $\mathbf{u} \in [H^1_{\mathrm{loc}}(\mathbb{R}^2 \setminus \overline{D})]^2$ \cite{LWWZ16, KGBB79}. Furthermore, the scattered field $\mathbf{u}$ exhibits the following asymptotic behavior as $r=|\mathbf x|\rightarrow \infty$:
\begin{equation}\label{eq:asymptotic}
    \mathbf{u}(\mathbf{x})
    =
    \frac{e^{\mathrm{i}k_p r}}{\sqrt{r}} \mathbf{u}_p^\infty(\hat{\mathbf{x}})
    +
    \frac{e^{\mathrm{i}k_s r}}{\sqrt{r}} \mathbf{u}_s^\infty(\hat{\mathbf{x}})
    +
    O\left( r^{-3/2} \right),
    \quad r := |\mathbf{x}| \to \infty,
\end{equation}
which holds uniformly in all observation directions $\hat{\mathbf{x}} := \mathbf{x}/|\mathbf{x}| \in \mathbb{S}^1$. Here, the unit circle in $\mathbb{R}^2$ is denoted by
\begin{equation*}
    \mathbb{S} := \left\{ \hat{\mathbf{x}} \in \mathbb{R}^2 : |\hat{\mathbf{x}}| = 1 \right\}.
\end{equation*}
The fields $\mathbf{u}_p^\infty$ and $\mathbf{u}_s^\infty$ are analytic vector-valued functions on $\mathbb{S}$, representing the compressional and shear  far-field patterns of $\mathbf{u}$, respectively. Due to the orthogonal nature of the Helmholtz decomposition, $\mathbf{u}_p^\infty(\hat{\mathbf{x}})$ is collinear with $\hat{\mathbf{x}}$, whereas $\mathbf{u}_s^\infty(\hat{\mathbf{x}})$ is orthogonal to $\hat{\mathbf{x}}$. Consequently, we can represent the combined far-field pattern as a vector-valued function $(\mathbf{u}_p^\infty, \mathbf{u}_s^\infty) \in [L^2(\mathbb{S})]^2$. In the sequel, we denote the total far-field pattern associated with the scattered field induced by the incident plane wave \eqref{eq:ui} as $\mathbf{u}^\infty(\cdot, \mathbf{d}, a_p, a_s):=(\mathbf{u}_p^\infty, \mathbf{u}_s^\infty)$.

The inverse problem under consideration is to reconstruct the boundary $\partial D$ (characterizing both the location and shape of the scatterer $D$) from the measured far-field pattern $\mathbf{u}^\infty(\cdot, \mathbf{d}, a_p, a_s)$ for all incident directions $\mathbf{d} \in \mathbb{S}$. Depending on the available data, this reconstruction is performed either at a fixed angular frequency $\omega > 0$ or across a finite range of frequencies $\omega \in [\omega_{\min}, \omega_{\max}]$ with $0 < \omega_{\min} < \omega_{\max}$. Assuming that the physical nature of the boundary is known \emph{a priori}, we investigate two canonical configurations:
\begin{itemize}
    \item \textbf{Inverse Problem 1 (IP1)}: Reconstruct $\partial D$ assuming $D$ is a rigid impenetrable scatterer satisfying the Dirichlet condition \eqref{eq:DirichletBC}.
    \item \textbf{Inverse Problem 2 (IP2)}: Reconstruct $\partial D$ assuming $D$ is a traction-free scatterer satisfying the Neumann condition \eqref{eq:NeumannBC}.
\end{itemize}
The severe non-linearity and ill-posedness inherent in inverse obstacle scattering present significant computational challenges, as small, unavoidable measurement perturbations in the far-field patterns can trigger massive instabilities in the reconstructed boundaries. This structural sensitivity underscores the necessity of developing robust and noise-tolerant numerical algorithms. To this end, this work develops monotonicity spectral sampling methods capable of exploiting both single-frequency and multi-frequency data, effectively stabilizing the reconstruction process while achieving high-fidelity boundary resolution under high noise levels.


\subsection{Connections to existing results and main contributions}

The uniqueness of inverse scattering problems has been extensively investigated over the past few decades; see \cite{CK19, DFL26, DGLY23, DGT25, DLM25, KS14, KS15, GS12, HKS12} and the references therein. In the context of elastic scattering, it is well-established that if the physical boundary type of the impenetrable scatterer (i.e., rigid or traction-free) is known \emph{a priori}, its shape is uniquely determined by the far-field patterns at a single frequency for all incident directions \cite{KS14, KS15, GS12, HKS12}. Remarkably, this unique determination remains valid even when the measurements are restricted to either the compressional (P-) or shear (S-) component of the far-field data (see \cite{GS12,KS15}).

For numerical reconstruction methods applied to inverse scattering problems, extensive numerical algorithms have been proposed to tackle the nonlinearity and ill-posedness inherent in the corresponding inverse problem (cf. \cite{CK19}). For inverse elastic impenetrable scattering, optimization-based iterative methods for determining the shape of the scatterer have been developed (see \cite{LWWZ16,DLL19,LY19}); these approaches typically incur significant computational cost due to the need to solve the associated direct scattering problem. Sampling-type methods (see \cite{Aren01,JLX18,LLS19,KG08,AK02,HKS12})  aim to image the boundary of the impenetrable scatterer by evaluating suitable imaging functionals at discrete sampling points. Furthermore, reverse time migration has been studied for inverse elastic impenetrable scattering (cf. \cite{CH15}). Multi-frequency based reconstruction methods have been shown to improve both the stability and resolution of recovering the scatterer; see \cite{BLLT15,BYZ19} for further discussion.

Unlike classical pointwise sampling-type methods \cite{CK19,KG08}, the monotonicity method operates as a domain-sampling approach. This paradigm was originally formulated for electrical impedance tomography (EIT) based on the monotonicity of the resistance matrix \cite{TR02}, and was subsequently investigated  for EIT by exploiting the monotonicity of the Neumann-to-Dirichlet (NtD) operator with respect to conductivity \cite{BU13}. In recent years, the monotonicity paradigm has been successfully extended to reconstruct elastic inclusions within bounded domains \cite{EH21, EH23, EP24}. These formulations establish rigorous monotonicity relations for the NtD operator with respect to the Lam\'{e} parameters and density, thereby enabling standard and linearized schemes to recover the support of internal elastic anomalies from boundary measurements.

Recently, monotonicity-based sampling methods have been developed for inverse acoustic and electromagnetic scattering in unbounded domains \cite{GB18,AG20,AG23}. However, time-harmonic elastic scattering presents substantial additional complexity due to the coexistence of P- and S-wave modes. These modes are governed by distinct wavenumbers and radiation conditions, and each contributes to the factorization of the far-field operator in a structurally unique way. This inherent coupling between wave modes introduces mathematical and numerical challenges. Although the monotonicity-based shape characterization for rigid impenetrable elastic scatterers was initiated in our previous work \cite{BDZ25}, and a recent framework for penetrable elastic media has been established in \cite{HX26}, a unified monotonicity-based sampling method that encompasses both rigid (Dirichlet) obstacles and traction-free (Neumann) cavities using far-field measurements has yet to be established. 

Furthermore, the classical monotonicity-based sampling method relies on counting the negative eigenvalues of the linear combination of specific operators, such as the far-field operator and the probing operator \cite{AG20, AG23}, hereafter referred to as the \emph{monotonicity operator}.  However, this approach  compresses the entire negative spectrum information into a single integer, discarding all magnitude information in favor of binary sign data. Meanwhile, since the underlying monotonicity operator is compact and self-adjoint, its eigenvalues accumulate at zero. Even arbitrary small data perturbations can flip the signs of numerous eigenvalues near the origin, making the counting map intrinsically discontinuous. This structural fragility renders the classical counting-based monotonicity sampling method highly sensitive to measurement noise, severely limiting its practical utility in realistic, noise-contaminated scattering scenarios. The numerical instability of classical counting-based sampling method represents a fundamental bottleneck that limits their applicability in practical inverse problems. To overcome this bottleneck, in this paper we propose novel spectral sampling methods for reconstructing rigid (Dirichlet) and traction-free (Neumann) impenetrable scatterers. By exploiting the continuous magnitude information of the eigenvalues of the monotonicity operator, rather than relying on a discontinuous, binary sign-counting of its spectrum, the proposed algorithms achieve exceptional robustness against measurement noise.

The main contributions of this work are twofold:
\begin{itemize}
    \item[(1)] 
  We establish sharp, monotonicity-based criteria for determining the shape and location of a Lipschitz impenetrable scatterer $D$ under either rigid (Dirichlet) or traction-free (Neumann) boundary conditions, as rigorously formulated in Theorems~\ref{Characterofimpenetrable scatterer} and~\ref{Characterofimpenetrable scatterer2}. Specifically, the family of monotonicity operators $\mathcal{A}_B$ associated with a probing domain $B$ serves as a rigorous mathematical test to verify the spatial inclusion $B \subseteq D$ (see Remark~\ref{rem:41} for a detailed discussion on this characterization).  To numerically realize these criteria, we first present a counting-based monotonicity sampling method, detailed in Algorithm~\ref{alg:discrete monotonicity}, which reconstructs the shape of $D$ utilizing single-frequency far-field data across all incident directions.

    \item[(2)] 
As discussed earlier, to resolve the critical computational bottleneck of the counting-based monotonicity sampling method, namely, the extreme sensitivity of the monotonicity operator's eigenvalue signs to arbitrarily small, unavoidable measurement perturbations which destabilizes the resulting indicator function, we introduce two novel monotonicity spectral sampling algorithms that  exploit the underlying eigenvalue \emph{magnitudes}. The single-frequency monotonicity spectral sampling method (Algorithm~\ref{alg:spectrum monotonicity}) employs a spectral indicator function that sums the negative eigenvalues of the discretized  the monotonicity operator. The multi-frequency monotonicity spectral sampling method (Algorithm~\ref{alg:multi-frequency spectrum monotonicity})  is obtained by taking a weighted combination of the single-frequency indicator functions across a frequency range, yielding a multiscale reconstruction that balances low-frequency stability with high-frequency resolution. Due to the continuous dependence of eigenvalues on perturbations, these indicator functions using the spetrum of the discretized  monotonicity operator are Lipschitz-continuous with respect to discretized  far-field data. These methods offer superior robustness and geometric fidelity compared to the counting-based monotonicity sampling method, particularly for non-convex scatterers, as demonstrated by extensive numerical experiments.
\end{itemize}

The remainder of this paper is organized as follows. Section~\ref{sec:far-field Operator decomposition} introduces the factorization of the far-field operator with some properties for rigid and traction-free impenetrable scatterers, introduces the relevant boundary integral operators, the Herglotz wave operator, and the notation used throughout the paper.
Section~\ref{sec:LocalizedWaveFunction} constructs the localized wave functions central to the monotonicity argument; more precisely, we build sequences of functions that blow up in a prescribed region while vanishing elsewhere in both the rigid and traction-free cases.
Section~\ref{sec:Characterization of the impenetrable scatterer} presents the main theoretical results (the shape characterization Theorems~\ref{Characterofimpenetrable scatterer} and~\ref{Characterofimpenetrable scatterer2}) and translates them into the counting-based monotonicity sampling method (Algorithm~\ref{alg:discrete monotonicity}) via numerical approximations of the far-field and probing operators.
Section~\ref{sec:Numerical algorithm} develops, within the monotonicity framework of Section~\ref{sec:Characterization of the impenetrable scatterer}, the single-frequency and multi-frequency monotonicity spectral sampling methods (Algorithms~\ref{alg:spectrum monotonicity} and~\ref{alg:multi-frequency spectrum monotonicity}). Finally, Section~\ref{sec:Numerical example} provides an extensive numerical validation of all three algorithms, demonstrating their respective noise robustness and geometric resolution across a range of scatterer geometries.


\section{Factorizations of far-field operators with properties}\label{sec:far-field Operator decomposition}
This section establishes the necessary operator-theoretic foundation for the monotonicity-based shape characterization to be developed in Section~\ref{sec:Characterization of the impenetrable scatterer}. We begin by reviewing the basic properties of the single-layer potential operator $\mathbf{S}$ and the boundary integral operator $\mathbf{N}$ (Lemmas~\ref{SProperty} and~\ref{NProperty}); these properties are essential  for establishing the control of the data-to-pattern operators $\mathbf{G}_D^d$ and $\mathbf{G}_D^n$. We then present the factorization of the far-field operators $\mathbf{F}^d$ and $\mathbf{F}^n$ for both boundary conditions (Lemma~\ref{Fdecomposition}), which expresses them in terms of $\mathbf{S}$, $\mathbf{N}$, the data-to-pattern operators $\mathbf{G}_D^d$ and $\mathbf{G}_D^n$. Finally, we establish quantitative estimates for the data-to-pattern operators $\mathbf{G}_D^d$ and $\mathbf{G}_D^n$ (Lemma~\ref{DatatoPatternProperty}) that allow the compact perturbations arising in the factorization to be controlled on finite-codimension subspaces, which is a technique that is central to the monotonicity argument in 
Section~\ref{sec:Characterization of the impenetrable scatterer}.

Let $D$ be an open bounded domain with Lipschitz boundary, we denote by $\left[L^2\left(\partial D\right) \right] ^2$ and $\left[H^{1/2} \left(\partial D\right) \right] ^2$ the standard vector-valued Sobolev spaces defined on $\partial D$. For any two vector functions $\mathbf{g}=\left( g_p,g_s\right)$, $\mathbf{h}=\left( h_p,h_s\right)$ belonging to $ \left[L^2\left(\partial D\right) \right] ^2$, the inner product on this Hilbert space is defined as

	\begin{align*}
		\left\langle \mathbf{g},\mathbf{h}\right\rangle := \frac{\omega}{k_p}\int_{\partial D} g_p(\mathbf d) \overline{h_p(\mathbf d)}ds(\mathbf d)+\frac{\omega}{k_s}\int_{\partial D} g_s(\mathbf d) \overline{h_s(\mathbf d)}ds(\mathbf d).
	\end{align*}
We usually denote the dual space of  $\left[H^{1/2}\left(\partial D\right) \right] ^2$ as  $\left[H^{-1/2}\left(\partial D\right) \right] ^2$ with respect to inner product in $\left[L^2\left(\partial D\right) \right] ^2$ and their dual pairing is denoted as $\left( \cdot,\cdot\right) $.

For a given $\mathbf{g}=\left( g_p, g_s\right)$, the \emph{Herglotz wave operator} associated with density $\mathbf{g}$ is defined by $\mathbf H: \left[L^2\left(\mathbb S\right) \right] ^2 \to \left[H^{1/2}\left(\partial D\right) \right] ^2$ with the explicit form
	 	\begin{align}\label{eq:HerglotzOperatorD}
	 		\mathbf{Hg}\left( \mathbf x\right) :=e^{-i\pi /4} \int_{\mathbb S}  \left\lbrace \sqrt{\frac{k_p}{\omega}}\mathbf{d} e^{ik_p\mathbf{d}\cdot\mathbf{x}}g_p(\mathbf{d})+\sqrt{\frac{k_s}{\omega}}\mathbf{d}^{\perp} e^{ik_s\mathbf{d}\cdot\mathbf{x}}g_s(\mathbf{d})\right\rbrace ds(\mathbf{d}).
	 	\end{align}
	 	It is straightforward to obtain the adjoint operator $\mathbf H^*$ have the following form
	 	\begin{align*}\label{eq:HerglotzOperatorDA}
	 		\mathbf{H^* \phi }\left( \mathbf d\right) :=e^{i\pi /4} \int_{\partial \mathbb D}  \left\lbrace \sqrt{\frac{\omega}{k_p}}\mathbf{d} \cdot e^{-ik_p\mathbf{d}\cdot\mathbf{x}}\mathbf{\phi}\left( \mathbf{x}\right)  , \sqrt{\frac{k_s}{\omega}}\mathbf{d}^{\perp} \cdot e^{-ik_s\mathbf{d}\cdot\mathbf{x}}\mathbf{\phi}\left( \mathbf{x}\right)\right\rbrace ds(\mathbf{x}).
	 	\end{align*}
The elastic Herglotz wave function with density $\mathbf{g}$ is defined as $\mathbf{v_g}(\mathbf{x}):= \mathbf{Hg}\left( \mathbf x\right) $, it is evident that $\mathbf{v_g}(\mathbf{x})$ can be regarded as a superposition of plane waves of the form $\eqref{eq:ui}$. With this definition, we are now in a position to introduce the far-field operator $\mathbf{F}$, which maps the incident Herglotz density $\mathbf{g}$ to the far-field pattern of the corresponding scattered field.
		
According to \cite{Aren01}, for $\hat{\mathbf x}\in \mathbb{S}$, the \emph{far-field operator} $\mathbf{F}$: $\left[L^2\left(\mathbb S\right) \right] ^2 \to \left[L^2\left(\mathbb S\right) \right] ^2$ is defined as
	\begin{equation}\begin{aligned}\label{eq:FarfieldOperator}
		\mathbf{Fg}(\hat{\mathbf x})
		 & :=e^{-i\pi /4} \int_{\mathbb S} \mathbf{u}^{\infty} \left( \hat{\mathbf x}, \mathbf d, \sqrt{\frac{k_p}{\omega}}g_p(\mathbf d),\sqrt{\frac{k_s}{\omega}}g_s(\mathbf d) \right) ds(\mathbf d)\\
		& =e^{-i\pi /4} \int_{\mathbb S}\left\lbrace \sqrt{\frac{k_p}{\omega}}\mathbf{u}^\infty (\hat{\mathbf x},\mathbf d,1,0)g_p(\mathbf d)+\sqrt{\frac{k_s}{\omega}}\mathbf{u}^\infty (\hat{\mathbf x},\mathbf d,0,1)g_s(\mathbf d)\right\rbrace ds(\mathbf d),
	\end{aligned}\end{equation}
here, 
    \begin{equation}\label{eq:far-field p s}
            \mathbf{u}^\infty(\hat{\mathbf{x}},\mathbf{d},1,0) \mbox{ and } \mathbf{u}^\infty(\hat{\mathbf{x}},\mathbf{d},0,1)
    \end{equation}
represent the far-field patterns generated by purely compressional  and purely shear incident waves, respectively. In our discussion, to distinguish between the far-field operators corresponding to the two types of boundary conditions, we denote by $\mathbf{F}^d$ the far-field operator associated with the rigid impenetrable scatterer, and by $\mathbf{F}^n$ the far-field operator associated with the traction-free impenetrable scatterer.

The factorization of the far-field operators $\mathbf{F}^d$ and $\mathbf{F}^n$ involves the single-layer potential operator $\mathbf{S}$ and the boundary integral operator $\mathbf{N}$, which are introduced below.

We define the single-layer potential operator $\mathbf{S}:\left[H^{-1/2}\left(\partial D\right) \right] ^2 \to \left[H^{1/2}\left(\partial D\right) \right] ^2$ and the boundary integral operator $\mathbf{N}:\left[H^{1/2}\left(\partial D\right) \right] ^2 \to \left[H^{-1/2}\left(\partial D\right) \right] ^2$ 
on $\partial D$ as follows
	\begin{align*}
		\mathbf S \phi \left( \mathbf x\right) &:= \int_{\partial D} \mathbb G \left( \mathbf x, \mathbf y\right) \phi\left(\mathbf y\right) ds\left( \mathbf y\right) , \qquad \mathbf x\in \partial D, \\
		\mathbf N \phi \left( \mathbf x\right) &:=\mathbf T_{\bm{\nu}(\mathbf x)}\int_{\partial D} \left[\mathbf T_{\bm{\nu}(\mathbf y)}\mathbb G \left( \mathbf x, \mathbf y\right)\right]^{\top} \phi\left(\mathbf y\right) ds\left( \mathbf y\right) , \qquad \mathbf x\in \partial D,
	\end{align*}
where $\mathbb G \left( \mathbf x, \mathbf y\right)$ is the Green's tensor of the Navier equation, namely
$$
\mathbb G \left( \mathbf x, \mathbf y\right):=\frac{i}{4\mu} H_0^{\left( 1\right) }\left( k_s|\mathbf x -\mathbf y|\right) \mathbf I_2 +\frac{i}{4\omega^2} \nabla_x^{\top} \nabla_x \left( H_0^{\left( 1\right) }\left( k_s|\mathbf x -\mathbf y|\right)-H_0^{\left( 1\right) }\left( k_p|\mathbf x -\mathbf y|\right)\right) .
$$
Here, $\mathbf I_2$ is the $2 \times 2$ identity matrix, $H_0^{\left( 1\right) }$ is the Hankel function of the first kind and of order 0.

The following two lemmas summarize some properties of the single-layer potential operator $\mathbf{S}$ and the boundary integral operator $\mathbf{N}$ established in \cite{Aren01,LLW25}. Here, $\mathbf{S}_i$  and $\mathbf{N}_i$ denote the single-layer potential operator and the boundary integral operator, respectively, corresponding to the angular frequency $\omega = i$.
	
	 \begin{lem}\label{SProperty}
	 	Assume $\omega^2$ is not a Dirichlet eigenvalue of $-\Delta^*$ in $D$, then there hold that
	 	\begin{itemize}
	 		\item [(1).] $\mathbf S$ is an isomorphism from the Sobolev space $\left[H^{-1/2}\left(\partial D\right) \right] ^2 $ onto $\left[H^{1/2}\left(\partial D\right) \right] ^2$.
	 		\item [(2).] $Im\left( \phi,\mathbf S\phi\right) =0$ for some $\phi \in \left[H^{-1/2}\left(\partial D\right) \right] ^2$ implies $\phi=0$.
	 		\item [(3).] The operator $\mathbf S_i$ is compact, self adjoint, and positive defined  in $\left[L^2\left(\partial D\right) \right] ^2 $. Moreover, $\mathbf S_i$ is coercive as an operator from $\left[H^{-1/2}\left(\partial D\right) \right] ^2 $ onto $\left[H^{1/2}\left(\partial D\right) \right] ^2$, more precisely, there exists $c_0>0$ such that
	 		$$
	 		\left( \phi,\mathbf S_i \phi\right) \geq c_0\Vert \phi\Vert^2_{\left[H^{-1/2}\left(\partial D\right) \right] ^2}, \qquad \phi \in \left[H^{-1/2}\left(\partial D\right) \right] ^2.
	 		$$
	 		Furthermore, there exists a self adjoint and positive definite square root $\mathbf S_{i}^{1/2}$ of $\mathbf S_i$ and $\mathbf S_{i}^{1/2}$ is an isomorphism from $\left[H^{-1/2}\left(\partial D\right) \right] ^2 $ onto $\left[L^2\left(\partial D\right) \right] ^2 $ and from $\left[L^2\left(\partial D\right) \right] ^2 $ onto $\left[H^{1/2}\left(\partial D\right) \right] ^2 $.
	 		\item [(4).] The difference $\mathbf S - \mathbf S_i$ is compact from $\left[H^{-1/2}\left(\partial D\right) \right] ^2 $ to $\left[H^{1/2}\left(\partial D\right) \right] ^2$.
	 	\end{itemize}
	 \end{lem}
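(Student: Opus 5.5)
The plan is to establish the four properties in the order (3), (4), (1), (2). Items (3) and (4) concern only the kernel, and they feed the Fredholm argument for (1). Item (2) is a separate energy/radiation argument.

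\emph{Step 1: $\mathbf S_i$ and its coercivity.} For $\omega=i$ the Navier operator becomes $\Delta^*-1$ and the Hankel functions become modified Bessel functions $K_0$. Thus $\mathbb G_i$ is real, symmetric, and exponentially decaying, so $\mathbf S_i$ is self adjoint. Set $\mathbf w=\mathrm{SL}_i\phi$ on $\mathbb R^2\setminus\partial D$. Green's formula on $D$ and on $\mathbb R^2\setminus\overline D$ (no boundary term at infinity thanks to the decay), together with the jump relation $[\mathbf T_{\bm\nu}\mathbf w]=-\phi$, gives
\[
(\phi,\mathbf S_i\phi)=\int_{\mathbb R^2}\big(\mathcal E(\mathbf w,\overline{\mathbf w})+|\mathbf w|^2\big)\,d\mathbf x ,
\]
where $\mathcal E$ is the Lamé energy density. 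By Korn's inequality in $H^1(\mathbb R^2)$ the right-hand side is bounded below by $c\|\mathbf w\|_{H^1(\mathbb R^2)}^2$. The trace theorem for conormal derivatives then yields $\|\phi\|_{H^{-1/2}}\le C\|\mathbf w\|_{H^1}$, which is coercivity. Compactness in $L^2$ comes from the compact embedding $H^{1/2}\hookrightarrow L^2$. Positivity and coercivity give the square root by spectral calculus. Because $\mathbf S_i$ is an isomorphism $H^{-1/2}\to H^{1/2}$ (Lax–Milgram) and is symmetric, interpolation shows that $\mathbf S_i^{1/2}$ maps $H^{-1/2}\to L^2\to H^{1/2}$ isomorphically.

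\emph{Step 2: compactness of $\mathbf S-\mathbf S_i$.} Expand $\mathbb G-\mathbb G_i$ using the series of $H_0^{(1)}$ and $K_0$. The logarithmic singularities cancel. The second-derivative terms $\nabla\nabla^\top$ have to be checked carefully, because the $1/\omega^2$ prefactor combines with $H_0(k_s r)-H_0(k_p r)$: the difference is of order $r^2\log r$, so after two derivatives only $\log r$ remains. I will verify that the $\log r$ coefficient agrees for $\omega$ and $i$ (it depends only on $\lambda,\mu$). The difference kernel is then $O(r^2\log r)$ with its derivatives, so it maps $H^{-1/2}\to H^{3/2}$, and this is compact into $H^{1/2}$ on a Lipschitz boundary.

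\emph{Step 3: isomorphism.} By Steps 1–2, $\mathbf S$ is Fredholm of index zero, so it suffices to prove injectivity. Suppose $\mathbf S\phi=0$ and let $\mathbf w=\mathrm{SL}\phi$. Then $\mathbf w$ solves the exterior Dirichlet problem with zero data and satisfies the Kupradze condition, so $\mathbf w=0$ outside $D$. Inside, it solves the interior Dirichlet problem with zero data; since $\omega^2$ is not a Dirichlet eigenvalue, $\mathbf w=0$ in $D$. The jump of the traction then gives $\phi=0$.

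\emph{Step 4: property (2).} Apply Green's formula in $D$ and in $B_R\setminus\overline D$:
\[
(\phi,\mathbf S\phi)=\int_{B_R}\big(\mathcal E-\omega^2|\mathbf w|^2\big)-\int_{|\mathbf x|=R}\mathbf T\mathbf w\cdot\overline{\mathbf w}.
\]
Taking imaginary parts and letting $R\to\infty$ with the asymptotics \eqref{eq:asymptotic}, I expect
\[
\mathrm{Im}(\phi,\mathbf S\phi)=c\Big(\omega\, k_p^{-1}\ldots\|\mathbf w_p^\infty\|^2+\ldots\|\mathbf w_s^\infty\|^2\Big),\qquad c>0 .
\]
This sign needs checking against the conjugation convention of the pairing; otherwise the relation may come out as $-\mathrm{Im}$. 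If the imaginary part vanishes, both far fields vanish. Rellich's lemma for the Navier equation then gives $\mathbf w=0$ outside $D$, and the argument finishes exactly as in Step 3, using the eigenvalue hypothesis.

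I expect Step 2 to be the main obstacle, namely controlling the hypersingular-looking $\nabla\nabla^\top$ part of the Green tensor difference. A secondary difficulty is justifying the Green formula and traction traces on Lipschitz boundaries, for which I would rely on the standard variational framework of McLean.
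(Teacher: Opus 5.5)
Your proposal is correct. The paper gives no proof of this lemma; it imports all four items from \cite{Aren01,LLW25}. Your argument is essentially the standard one behind those references, namely Kirsch's acoustic proof carried over to the Navier equation: an energy identity for $\mathbf S_i$, cancellation of the leading singularities for $\mathbf S-\mathbf S_i$, the Fredholm alternative plus exterior uniqueness and the non-eigenvalue hypothesis for (1), and the far-field energy identity plus Rellich's lemma for (2). What your route adds is a self-contained proof in the Lipschitz setting the paper claims; Arens works with $C^2$ boundaries (cf. the remark after Lemma~\ref{Fdecomposition}). Your coefficient check in Step 2 is right: the $r^2\log r$ coefficient of $\frac{i}{4\omega^2}\bigl(H_0^{(1)}(k_sr)-H_0^{(1)}(k_pr)\bigr)$ equals $\frac{1}{8\pi}\bigl(\frac1\mu-\frac1{\lambda+2\mu}\bigr)$ for every $\omega$, including $\omega=i$. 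The sign ambiguity you flag in Step 4 is harmless, because $\mathrm{Im}(\phi,\mathbf S\phi)=\pm\,\omega\bigl(\sqrt{\lambda+2\mu}\,\|\mathbf w_p^\infty\|^2+\sqrt{\mu}\,\|\mathbf w_s^\infty\|^2\bigr)$ and (2) only needs this to vanish exactly when both far fields vanish.

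Three technical adjustments are worth making.

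\emph{Step 2.} On a Lipschitz curve $H^{3/2}(\partial D)$ is not a meaningful space. The correct target is $[H^1(\partial D)]^2$, which embeds compactly into $[H^{1/2}(\partial D)]^2$. There is also a cleaner route that bypasses the kernel bookkeeping you expected to be the main obstacle. Set $\mathbf v:=\mathrm{SL}\phi-\mathrm{SL}_i\phi\in[H^1_{loc}(\mathbb R^2)]^2$. It satisfies $(\Delta^*+\omega^2)\mathbf v=-(1+\omega^2)\,\mathrm{SL}_i\phi$ distributionally on all of $\mathbb R^2$, since the source $\phi\,\delta_{\partial D}$ is common to both potentials and cancels. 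Interior regularity then gives $\|\mathbf v\|_{H^3(B)}\le C\|\phi\|_{H^{-1/2}(\partial D)}$ on a ball $B\supset\overline D$. Since $H^3(B)\subset C^1(\overline B)$, the trace $(\mathbf S-\mathbf S_i)\phi=\mathbf v|_{\partial D}$ is Lipschitz on $\partial D$ with constant controlled by $\|\phi\|_{H^{-1/2}(\partial D)}$, hence bounded in $H^1(\partial D)$.

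\emph{Step 1, square root.} The interpolation argument can be replaced by the identity $\|\mathbf S_i^{1/2}\phi\|_{L^2}^2=(\phi,\mathbf S_i\phi)$. By boundedness and coercivity this is equivalent to $\|\phi\|_{H^{-1/2}}^2$ for $\phi\in L^2$. Density then gives an isomorphism $H^{-1/2}\to L^2$ (bounded below with dense, hence closed and full, range), and its adjoint is $\mathbf S_i^{1/2}:L^2\to H^{1/2}$.

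\emph{Step 1, positivity.} Positivity of the Lam\'e energy is exactly where the paper's assumption $\lambda+\mu>0$ enters. Since $\mathbf w\in[H^1(\mathbb R^2)]^2$, you can also avoid Korn altogether via $\int_{\mathbb R^2}\mathcal E(\mathbf w,\overline{\mathbf w})=\mu\|\nabla\mathbf w\|^2+(\lambda+\mu)\|\nabla\cdot\mathbf w\|^2$.
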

	 
	 \begin{lem}\label{NProperty}
	 Assume $\omega^2$ is not a Neumann eigenvalue of $-\Delta^*$ in $D$, then there hold that
	 	\begin{itemize}
	 		\item [(1).] $\mathbf{N}$ is an isomorphism from $[H^{1/2}(\partial D)]^2$ onto $[H^{-1/2}(\partial D)]^2$.
	 		\item [(2).] $Im(\mathbf{N}\varphi, \varphi) > 0$ for all $\varphi \in [H^{1/2}(\partial D)]^2$ with $\varphi \neq 0$.
	 		\item [(3).] The operator $\mathbf{N}_i$ is self-adjoint in $[L^2(\partial D)]^2$. Moreover, $-\mathbf{N}_i$ is coercive as an operator $\left[H^{-1/2}\left(\partial D\right) \right] ^2 $ onto $\left[H^{1/2}\left(\partial D\right) \right] ^2$, more precisely there exists $\tilde{c}_0>0$ such that
			\[
	 		-(\mathbf{N}_i\varphi, \varphi) \geq \tilde{c_0} \|\varphi\|_{[H^{1/2}(\partial D)]^2}, \qquad \varphi \in [H^{1/2}(\partial D)]^2.
	 		\]
			\item [(4).] The difference $\mathbf{N} - \mathbf{N}_i$ is compact from $[H^{1/2}(\partial D)]^2$ into $[H^{-1/2}(\partial D)]^2$.
		\end{itemize}
	 \end{lem}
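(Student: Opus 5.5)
The proof of Lemma~\ref{NProperty} will follow the same route as the proof of Lemma~\ref{SProperty}. We work with layer potentials on $\partial D$ and use the jump relations together with Green's (Betti's) formula in $D$ and in $B_R\setminus\overline{D}$. The operator $\mathbf N$ is the exterior (equivalently interior) traction of the double-layer potential
\[
\mathbf w(\mathbf x)=\int_{\partial D}\big[\mathbf T_{\bm\nu(\mathbf y)}\mathbb G(\mathbf x,\mathbf y)\big]^{\top}\varphi(\mathbf y)\,ds(\mathbf y),
\]
whose traction is continuous across $\partial D$ and whose trace jumps by $[\mathbf w]=\pm\varphi$.

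\emph{Step (2).} For (2), we apply Betti's formula to $\mathbf w$ inside $D$ and in $B_R\setminus\overline D$. This gives
\[
(\mathbf N\varphi,\varphi)=\int_{\partial D}\mathbf T_\nu\mathbf w\cdot\overline{(\mathbf w_- - \mathbf w_+)}\,ds,
\]
which equals a real energy term (the Lamé bilinear form minus $\omega^2\|\mathbf w\|^2$ over $D\cup (B_R\setminus\overline D)$) plus the boundary term $\int_{|x|=R}\mathbf T\mathbf w\cdot\overline{\mathbf w}\,ds$. By the Kupradze radiation condition, the imaginary part of that boundary term tends to a positive multiple of $\|\mathbf w_p^\infty\|^2+\|\mathbf w_s^\infty\|^2$; the paper's weighted inner product with factors $\omega/k_p,\omega/k_s$ appears exactly here. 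Hence $\mathrm{Im}(\mathbf N\varphi,\varphi)\ge 0$, with the sign fixed by the orientation convention. If equality holds, Rellich's lemma gives $\mathbf w=0$ outside $D$, so $\mathbf T_\nu \mathbf w_-=\mathbf T_\nu\mathbf w_+=0$. Then $\mathbf w_-$ solves the interior Neumann problem, and since $\omega^2$ is not a Neumann eigenvalue, $\mathbf w_-=0$. Therefore $\varphi=\mathbf w_--\mathbf w_+=0$.

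\emph{Steps (3) and (4).} For (3), at $\omega=i$ the kernel decays exponentially and is real and symmetric, so $\mathbf N_i$ is self-adjoint. The same Green identity, now with no radiation term and with the sign $+|\omega|^2$, gives
\[
-(\mathbf N_i\varphi,\varphi)=\mathcal E(\mathbf w)+\|\mathbf w\|^2_{L^2(\mathbb R^2\setminus\partial D)},
\]
where $\mathcal E(\mathbf w)=\int\big(\lambda|\nabla\!\cdot\mathbf w|^2+2\mu|\varepsilon(\mathbf w)|^2\big)$ is the Lamé energy. By Korn's inequality this dominates $\|\mathbf w\|^2_{H^1(D)}+\|\mathbf w\|^2_{H^1(\mathbb R^2\setminus\overline D)}$, and the trace theorem applied to the jump gives $\ge c\|\varphi\|^2_{H^{1/2}}$. (The statement's $\|\varphi\|$ without a square is presumably a typo.) For (4), $\mathbb G_\omega-\mathbb G_i$ is smoother by two orders, since its leading singularities cancel as in the Hankel expansion. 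Consequently $\mathbf N-\mathbf N_i$ maps $H^{1/2}\to H^{1/2}$ boundedly, and this space embeds compactly into $H^{-1/2}$. On Lipschitz boundaries the smoothing statement is justified by the local $H^1$ regularity of the difference potential.

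\emph{Step (1) and main obstacle.} Finally, (1) follows from (3) and (4): $\mathbf N$ is a compact perturbation of the isomorphism $\mathbf N_i$ (an isomorphism by Lax--Milgram), hence Fredholm of index zero, and it is injective by (2). The main obstacle is the sign and limit computation in (2): extracting the positive far-field energy from the elastic radiation condition, which requires decoupling the P- and S-parts, and keeping the orientation of the jump consistent so that the sign comes out as $>0$.
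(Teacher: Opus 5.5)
The paper does not prove this lemma; it quotes it from the elastic layer-potential literature it cites. Your outline is the standard argument behind those sources: the double-layer potential, Betti's formula in $D$ and in $B_R\setminus\overline D$, the radiation condition for the imaginary part, Korn plus the trace theorem for coercivity, and the Fredholm alternative for (1). However, the step you yourself call the crux is left unresolved, and the identity you display produces the wrong sign.

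\textbf{The sign in (2) and (3).} The sign is not a matter of orientation convention. It is fixed by the paper's definition of $\mathbf N$: the kernel is $[\mathbf T_{\bm\nu(\mathbf y)}\mathbb G(\mathbf x,\mathbf y)]^\top$, $\bm\nu$ is outward, and $\mathbb G$ is normalized by $\Delta^*\mathbb G+\omega^2\mathbb G=-\delta\,\mathbf I_2$. For this potential the jump is $\mathbf w_+-\mathbf w_-=\varphi$, where $\mathbf w_+$ is the \emph{exterior} trace. To see this, write $\mathrm{SL}$ and $\mathrm{DL}$ for the single- and double-layer potentials. Green's representation formula for a solution $\mathbf u$ in $D$ gives $\mathbf u=\mathrm{SL}(\mathbf T_{\bm\nu}\mathbf u)-\mathrm{DL}(\mathbf u)$ in $D$ and $0$ outside; continuity of the single layer then yields the jump.

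You instead write $\varphi=\mathbf w_--\mathbf w_+$, with $\mathbf w_-$ the interior trace (as your interior Neumann step makes clear). Carrying Betti's formula through with that identity gives $\mathrm{Im}(\mathbf N\varphi,\varphi)=-\mathrm{Im}\int_{|\mathbf x|=R}\mathbf T\mathbf w\cdot\overline{\mathbf w}\,ds\le 0$. At $\omega=i$ it gives $(\mathbf N_i\varphi,\varphi)=\mathcal E(\mathbf w)+\|\mathbf w\|^2\ge 0$. Both are the opposite of (2) and (3), so your text asserts the right signs while your displayed step derives the wrong ones.

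With the correct jump, $(\mathbf N\varphi,\varphi)=\int_{\partial D}\mathbf T_{\bm\nu}\mathbf w\cdot\overline{(\mathbf w_+-\mathbf w_-)}\,ds$. Use $\mathbf T\mathbf w_p\sim ik_p(\lambda+2\mu)\mathbf w_p$, $\mathbf T\mathbf w_s\sim ik_s\mu\,\mathbf w_s$, and the fact that $\mathbf w_p^\infty\parallel\hat{\mathbf x}$ while $\mathbf w_s^\infty\perp\hat{\mathbf x}$, so cross terms drop. You then get
\[
\mathrm{Im}(\mathbf N\varphi,\varphi)=\frac{\omega^2}{k_p}\|\mathbf w_p^\infty\|^2_{L^2(\mathbb S)}+\frac{\omega^2}{k_s}\|\mathbf w_s^\infty\|^2_{L^2(\mathbb S)},\qquad -(\mathbf N_i\varphi,\varphi)=\mathcal E(\mathbf w)+\|\mathbf w\|^2_{L^2(\mathbb R^2\setminus\partial D)},
\]
which is exactly what (2) and (3) require. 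Note that (2) and (3) rest on the same jump relation, so they stand or fall together. Your Rellich step also needs $\mathbb R^2\setminus\overline D$ to be connected, which the paper assumes.

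\textbf{Step (4) on Lipschitz boundaries.} You should not claim that $\mathbf N-\mathbf N_i$ maps into $[H^{1/2}(\partial D)]^2$: the traction multiplies by $\bm\nu\in L^\infty$ only. Moreover, $H^1$ regularity of the difference potential alone gives a traction only in $H^{-1/2}$, so you need one more derivative.

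What works is this. Let $\mathbf v:=\mathbf w_\omega-\mathbf w_i$ be the difference of the two double-layer potentials with density $\varphi$. It has neither a trace jump nor a traction jump across $\partial D$. Hence it is an $H^1_{loc}(\mathbb R^2)$ weak solution of $\Delta^*\mathbf v+\omega^2\mathbf v=-(1+\omega^2)\mathbf w_i$ in all of $\mathbb R^2$, and interior regularity gives $\mathbf v\in H^2_{loc}$. Therefore $(\mathbf N-\mathbf N_i)\varphi=\mathbf T_{\bm\nu}\mathbf v\in[L^2(\partial D)]^2$ with norm at most $C\|\varphi\|_{H^{1/2}}$, and $L^2\hookrightarrow H^{-1/2}$ is compact. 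Equivalently, the kernel of $\mathbf N-\mathbf N_i$ has only a logarithmic singularity.

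Similarly, since $\mathbf N_i$ is hypersingular, derive its self-adjointness from the sesquilinear Green identity rather than from symmetry of the kernel.
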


Finally, we introduce the \emph{data-to-pattern operator} $\mathbf{G}_D^d$ and $\mathbf{G}_D^n$. By the existence and uniqueness of radiating solutions to the exterior Dirichlet and Neumann boundary value problems for the Navier equation \eqref{eq:syst1}~\cite{HH93}, the mappings from boundary values to far-field patterns are well-defined and injective. We define these operators as

\begin{align}\label{eq:DatatoPatternOperatorD}
	\mathbf{G}_D^d : \left[H^{1/2}\left(\partial D\right)\right]^2 \to \left[L^2\left(\mathbb{S}^1\right)\right]^2, \quad \mathbf{G}_D^d \mathbf{f} := \mathbf{u}_d^{\infty},
\end{align}

\noindent and

\begin{align}\label{eq:DatatoPatternOperatorN}
	\mathbf{G}_D^n : \left[H^{-1/2}\left(\partial D\right)\right]^2 \to \left[L^2\left(\mathbb{S}^1\right)\right]^2, \quad \mathbf{G}_D^n \mathbf{g} := \mathbf{u}_n^{\infty},
\end{align}

\noindent where $\mathbf{u}_d^{\infty}$ and $\mathbf{u}_n^{\infty}$ are the far-field patterns of the solutions to the exterior Dirichlet and Neumann boundary value problems with boundary values $\mathbf{f}$ and $\mathbf{g}$, respectively.
 
In Lemma \ref{Fdecomposition}, we  recall from \cite{Aren01,LLW25} the factorization of the far-field operators $\mathbf{F}^d$ and $\mathbf{F}^n$, as well as the relationships among the operators $\mathbf{S}$, $\mathbf{H}$, and $\mathbf{G}_D^d$. 
    
\begin{lem}\label{Fdecomposition}
For the far-field operator $\mathbf F^d$ and $\mathbf F^n$, Herglotz wave operator $\mathbf H$, data-to-pattern operator $\mathbf G_D^d$ and $\mathbf G_D^n$, we have the following three results
\begin{itemize}
\item [(1).] The far-field operator $\mathbf F^d$ can be represented as
\begin{align}\label{eq:FarfieldOperatorDecomposition}
 \mathbf F^d=-\sqrt{8\pi\omega}\mathbf{G}_D^d \bmf{S}^*\bmf{G}_D^{d*},
\end{align}
where $\bmf{G}_D^{d*}:\left[L^2\left(\mathbb S\right) \right] ^2 \to \left[H^{-1/2}\left(\partial D\right) \right] ^2$ and $\mathbf{S^*}:\left[H^{-1/2}\left(\partial D\right) \right] ^2 \to \left[H^{1/2}\left(\partial D\right) \right] ^2$ denote the adjoints of $\mathbf G_D^d$ and $\mathbf S$, respectively. 
\item [(2).] The far-field operator $\mathbf F^n$ can be represented as
\begin{align*}
 \mathbf F^n=-\sqrt{8\pi\omega}\mathbf{G}_D^n \bmf{N}^*\bmf{G}_D^{n*},
\end{align*}
where $\bmf{G}_D^{n*}:\left[L^2\left(\mathbb S\right) \right] ^2 \to \left[H^{1/2}\left(\partial D\right) \right] ^2$ and $\mathbf{N^*}:\left[H^{1/2}\left(\partial D\right) \right] ^2 \to \left[H^{-1/2}\left(\partial D\right) \right] ^2$ denote the adjoints of $\mathbf G_D^n$ and $\mathbf N$, respectively. 
\item [(3).] The operators $\mathbf S$, $\mathbf H$ and $\mathbf G_D^d$ and their adjoints satisfy the following two equations
\begin{align}\label{eq:HGSoperator}
	\mathbf{H^*}=\sqrt{8\pi\omega}\mathbf G_D^d \mathbf S\quad, \quad \mathbf{H}=\sqrt{8\pi\omega}\mathbf{S^*} \mathbf G_D^{d*}.
\end{align}
\end{itemize}
\end{lem}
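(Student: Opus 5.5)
The proof follows the standard factorization argument of Kirsch type, adapted to the elastic setting as in \cite{Aren01}. I will prove item (3) first and then derive (1) and (2) from it.

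\textbf{Step 1: the identity $\mathbf H^*=\sqrt{8\pi\omega}\,\mathbf G_D^d\mathbf S$.} For $\phi\in[H^{-1/2}(\partial D)]^2$, consider the elastic single-layer potential $\mathbf w(\mathbf x)=\int_{\partial D}\mathbb G(\mathbf x,\mathbf y)\phi(\mathbf y)\,ds(\mathbf y)$ in $\mathbb R^2\setminus\overline D$. It is a radiating solution of the Navier equation whose Dirichlet trace is $\mathbf S\phi$, so by uniqueness of the exterior Dirichlet problem its far-field pattern is $\mathbf G_D^d\mathbf S\phi$. On the other hand, the far-field pattern can be computed directly from the asymptotics of the Green's tensor. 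The $H_0^{(1)}(k_p|\cdot|)$ part yields a longitudinal contribution proportional to $\hat{\mathbf x}\hat{\mathbf x}^\top e^{-ik_p\hat{\mathbf x}\cdot\mathbf y}$, and the $H_0^{(1)}(k_s|\cdot|)$ part a transversal contribution proportional to $\hat{\mathbf x}^\perp\hat{\mathbf x}^{\perp\top} e^{-ik_s\hat{\mathbf x}\cdot\mathbf y}$, each with constants of the form $\frac{e^{i\pi/4}}{\sqrt{8\pi k}}\cdot\frac{1}{\lambda+2\mu}$ or $\frac{1}{\mu}$. Using $k_p^2=\omega^2/(\lambda+2\mu)$ and $k_s^2=\omega^2/\mu$, the prefactors become $\frac{e^{i\pi/4}}{\sqrt{8\pi\omega}}\sqrt{k_p/\omega}$ (respectively $\sqrt{k_s/\omega}$). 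Writing these in components $(g_p,g_s)$ relative to the weighted inner product, and comparing with the formula for $\mathbf H^*$, gives exactly $\mathbf H^*\phi=\sqrt{8\pi\omega}\,\mathbf G_D^d\mathbf S\phi$. Taking adjoints with respect to the dual pairing then gives the second identity, $\mathbf H=\sqrt{8\pi\omega}\,\mathbf S^*\mathbf G_D^{d*}$.

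The main obstacle is the bookkeeping of constants. The weights $\omega/k_p$ and $\omega/k_s$ in the inner product are chosen precisely so that the $\sqrt{k/\omega}$ factors match. I would verify this carefully, including the phase $e^{\pm i\pi/4}$ and the conjugation in the adjoint.

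\textbf{Step 2: factorization of $\mathbf F^d$.} By linearity and superposition, $\mathbf F^d\mathbf g$ is the far-field pattern of the scattered field for the incident Herglotz wave $\mathbf H\mathbf g$, up to the common factor $e^{-i\pi/4}$, which is already built into $\mathbf H$. The Dirichlet condition gives scattered trace $-\mathbf H\mathbf g$ on $\partial D$. Hence $\mathbf F^d=-\mathbf G_D^d\mathbf H$, and inserting Step 1 yields
\[
\mathbf F^d=-\sqrt{8\pi\omega}\,\mathbf G_D^d\mathbf S^*\mathbf G_D^{d*}.
\]

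\textbf{Step 3: factorization of $\mathbf F^n$.} Here the scattered traction is $-\mathbf T_\nu\mathbf H\mathbf g$, so $\mathbf F^n=-\mathbf G_D^n\mathbf T_\nu\mathbf H$. The analogue of Step 1 uses the double-layer potential $\int_{\partial D}[\mathbf T_{\nu(\mathbf y)}\mathbb G(\mathbf x,\mathbf y)]^\top\psi(\mathbf y)\,ds(\mathbf y)$. Its traction on $\partial D$ is $\mathbf N\psi$, and its far field, computed via the same asymptotics, equals $\frac{1}{\sqrt{8\pi\omega}}(\mathbf T_\nu\mathbf H)^*\psi$. This gives $(\mathbf T_\nu\mathbf H)^*=\sqrt{8\pi\omega}\,\mathbf G_D^n\mathbf N$, hence $\mathbf T_\nu\mathbf H=\sqrt{8\pi\omega}\,\mathbf N^*\mathbf G_D^{n*}$, and substituting into $\mathbf F^n=-\mathbf G_D^n\mathbf T_\nu\mathbf H$ gives the claimed factorization of $\mathbf F^n$.

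The only delicate point beyond the constants is that the far field of a double layer involves $\mathbf T_{\nu(\mathbf y)}$ applied to plane waves. This matches $\mathbf T_\nu$ applied to the kernel of $\mathbf H$, after using the reciprocity $\mathbb G(\mathbf x,\mathbf y)^\top=\mathbb G(\mathbf y,\mathbf x)$.
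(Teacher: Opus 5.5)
Your proposal is correct and is essentially the standard Kirsch-type argument of \cite{Aren01,LLW25}; the paper does not prove this lemma but recalls it from those works. The argument combines $\mathbf F^d=-\mathbf G_D^d\mathbf H$ (resp.\ $\mathbf F^n=-\mathbf G_D^n\mathbf T_{\nu}\mathbf H$) with the far-field representation of the single-layer (resp.\ double-layer) potential and uniqueness of the exterior Dirichlet (resp.\ Neumann) problem. One bookkeeping slip: the P-part far-field prefactor of $\mathbb G$ is $\frac{e^{i\pi/4}}{\sqrt{8\pi\omega}}(k_p/\omega)^{3/2}$ (and $(k_s/\omega)^{3/2}$ for the S-part), not $\sqrt{k_p/\omega}$; the extra factor $k_p/\omega$ is exactly what the weight $\omega/k_p$ in the inner product contributes to $\mathbf H^*$, so $\mathbf H^*=\sqrt{8\pi\omega}\,\mathbf G_D^d\mathbf S$ still holds.
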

\begin{rem}
The factorization \eqref{eq:FarfieldOperatorDecomposition} for the far-field operator $\mathbf{F}^d$ in the case where $D$ is a rigid impenetrable scatterer was proved in \cite{Aren01}, under the assumption that $D$ is a $C^2$-smooth domain. In fact, this result can be extended to the case where $D$ is merely a bounded Lipschitz domain, with only minor modifications.
\end{rem}
In the following lemma, we establish estimates for the inner products involving the operators $\mathbf{G}^d$ and $\mathbf{G}^n$ and arbitrary compact operators $\mathbf{K}_1$ and $\mathbf{K}_2$, acting on $\phi \in V_1^\perp$ and $\phi \in V_2^\perp$, respectively. These estimates play a crucial role in the derivation of the shape characterization by means of the monotonicity method.
    
	\begin{lem}\label{DatatoPatternProperty}
		\mbox{}
	\begin{itemize}
	  \item [(1).] For the rigid impenetrable scatterer, let $\mathbf K_1:\left[H^{-1/2}\left(\partial D\right) \right] ^2 \to \left[H^{1/2}\left(\partial D\right) \right] ^2$ is a compact and self and adjoint operator, then for any constant $c_1>0$, there exists a finite-dimensional subspace $V_1 \subseteq \left[L^2\left(\mathbb S\right) \right] ^2$, such that
	\begin{align*}
		\left|\left( \mathbf G_D^{d*} \mathbf {\phi} ,\mathbf {K}_1\mathbf{G}_D^{d*} \mathbf {\phi}\right) \right| \leq c_1\Vert \mathbf G_D^{d*} \mathbf \phi\Vert _{\left[H^{-1/2}\left(\partial D\right) \right] ^2} ^2, \qquad \forall \mathbf {\phi} \in V_1^\perp,
	\end{align*}
	here $\left( \cdot,\cdot\right) $denotes the duality pairing in $\left( \left[H^{-1/2}\left(\partial D\right) \right] ^2 ,\left[H^{1/2}\left(\partial D\right) \right] ^2\right) $.
	\item [(2).] For the traction-free impenetrable scatterer, let $\mathbf K_2:\left[H^{1/2}\left(\partial D\right) \right] ^2 \to \left[H^{-1/2}\left(\partial D\right) \right] ^2$ is a compact and self and adjoint operator, then for any constant $c_2>0$, there exists a finite-dimensional subspace $V_2 \subseteq \left[L^2\left(\mathbb S\right) \right] ^2$, such that
	\begin{align*}
		\left|\left( \mathbf G_D^{n*} \mathbf {\phi} ,\mathbf {K}_2\mathbf {G}_D^{n*} \mathbf {\phi}\right) \right| \leq c_2\Vert \mathbf G_D^{n*} \mathbf \phi\Vert _{\left[H^{1/2}\left(\partial D\right) \right] ^2} ^2, \qquad \forall \mathbf {\phi} \in V_2^\perp,
	\end{align*}
	here $\left( \cdot,\cdot\right) $denotes the duality pairing in $\left( \left[H^{1/2}\left(\partial D\right) \right] ^2 ,\left[H^{-1/2}\left(\partial D\right) \right] ^2\right) $.
	\end{itemize}
	\end{lem}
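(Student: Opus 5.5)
The plan is the standard spectral-truncation argument for compact self-adjoint operators. I give it in detail for part (1); part (2) is identical after exchanging the roles of $[H^{-1/2}(\partial D)]^2$ and $[H^{1/2}(\partial D)]^2$.

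\emph{Step 1: move to a Hilbert space with the right norm.} By Lemma~\ref{SProperty}(3), $\mathbf S_i^{1/2}$ is an isomorphism from $[H^{-1/2}(\partial D)]^2$ onto $[L^2(\partial D)]^2$, and $\psi\mapsto\Vert \mathbf S_i^{1/2}\psi\Vert_{L^2}$ is a norm equivalent to $\Vert\cdot\Vert_{H^{-1/2}}$. For $\psi=\mathbf S_i^{-1/2}\eta$ with $\eta\in[L^2(\partial D)]^2$ we write
\[
(\psi,\mathbf K_1\psi)=\langle \eta,\widetilde{\mathbf K}\eta\rangle_{L^2},\qquad \widetilde{\mathbf K}:=\mathbf S_i^{-1/2}\mathbf K_1\mathbf S_i^{-1/2}.
\]
The operator $\widetilde{\mathbf K}$ is compact and self-adjoint on $[L^2(\partial D)]^2$, since $\mathbf S_i^{-1/2}$ is bounded from $[H^{1/2}]^2$ to $[L^2]^2$.

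\emph{Step 2: spectral truncation.} By the spectral theorem, $\widetilde{\mathbf K}=\sum_j\lambda_j\langle\cdot,e_j\rangle e_j$ with $\lambda_j\to0$. Given $c_1>0$, we fix $N$ such that $|\lambda_j|\le c$ for all $j>N$. Here $c>0$ is chosen so that $c\,\Vert\mathbf S_i^{1/2}\psi\Vert_{L^2}^2\le c_1\Vert\psi\Vert_{H^{-1/2}}^2$, which is possible by the norm equivalence. Then for every $\eta\perp\operatorname{span}\{e_1,\dots,e_N\}$,
\[
|\langle\eta,\widetilde{\mathbf K}\eta\rangle|\le c\Vert\eta\Vert^2.
\]

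\emph{Step 3: pull the finite-codimension condition back to $[L^2(\mathbb S)]^2$.} We have $\eta=\mathbf S_i^{1/2}\mathbf G_D^{d*}\phi$, so for each $j$,
\[
\langle \eta,e_j\rangle_{L^2}=\langle \phi,\, \mathbf G_D^{d}\mathbf S_i^{1/2}e_j\rangle_{L^2(\mathbb S)},
\]
with the appropriate duality and weighted inner products. We set
\[
V_1:=\operatorname{span}\{\mathbf G_D^d\mathbf S_i^{1/2}e_j:\ j=1,\dots,N\}\subseteq[L^2(\mathbb S)]^2.
\]
Here $\mathbf S_i^{1/2}e_j\in[H^{1/2}(\partial D)]^2$, so $\mathbf G_D^d$ is applicable. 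Then $\phi\in V_1^\perp$ implies $\eta\perp e_j$ for $j\le N$, and Step 2 gives
\[
|(\mathbf G_D^{d*}\phi,\mathbf K_1\mathbf G_D^{d*}\phi)|\le c\Vert\mathbf S_i^{1/2}\mathbf G_D^{d*}\phi\Vert^2\le c_1\Vert\mathbf G_D^{d*}\phi\Vert^2_{H^{-1/2}}.
\]

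\emph{Part (2).} We replace $\mathbf S_i^{1/2}$ by a square root of the coercive self-adjoint operator $-\mathbf N_i$ from Lemma~\ref{NProperty}(3). Equivalently, we use the Riesz isomorphism $[H^{1/2}]^2\cong[L^2]^2$ induced by the inner product $-(\mathbf N_i\cdot,\cdot)$, and then repeat Steps 1--3 with $\mathbf G_D^{n}$.

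\emph{Main obstacle.} The delicate step is Step 3: making the adjoint bookkeeping consistent. Lemma~\ref{Fdecomposition} uses $\mathbf G_D^{d*}$ in the dual-pairing sense with the weighted inner product on $[L^2(\mathbb S)]^2$, so one must check that orthogonality to $V_1$ really transfers to orthogonality of $\eta$ to the $e_j$. To avoid the explicit square root, I would first try an abstract version: the map $\phi\mapsto\mathbf G_D^{d*}\phi$, composed with the finite-rank spectral projection of $\widetilde{\mathbf K}$, is a finite-rank map on $[L^2(\mathbb S)]^2$. Its kernel therefore has finite codimension, and $V_1$ can be taken as the orthogonal complement of that kernel.
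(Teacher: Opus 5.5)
Your proposal is correct and is essentially the paper's own proof. It uses the same conjugation $\widetilde{\mathbf K}=\mathbf S_i^{-1/2}\mathbf K_1\mathbf S_i^{-1/2}$ on $[L^2(\partial D)]^2$, the same spectral truncation at threshold $c_1/\Vert\mathbf S_i^{1/2}\Vert^2$, and the same choice $V_1=\mathbf G_D^d\mathbf S_i^{1/2}\widetilde V_1$ with the same transfer of orthogonality through the adjoint. For part (2), which the paper dismisses as analogous, your Riesz-isomorphism option is the cleaner route, since Lemma~\ref{NProperty} does not supply a square root of $-\mathbf N_i$: work directly in $[H^{1/2}(\partial D)]^2$ with the equivalent inner product $-(\mathbf N_i\cdot,\cdot)$ and the compact self-adjoint operator $(-\mathbf N_i)^{-1}\mathbf K_2$.
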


	\begin{proof}

    We first prove (1).
    From Lemma \ref{SProperty}, we can deduce that $\mathbf S_{i}^{1/2}$ is an isomorphism from $\left[H^{-1/2}\left(\partial D\right) \right] ^2 \to \left[L^2\left(\partial D\right) \right] ^2 ,$  thus it inverse operator $\mathbf S_{i}^{-1/2}:\left[L^2\left(\partial D\right) \right] ^2 \to \left[H^{-1/2}\left(\partial D\right) \right] ^2$  exists, and then $\mathbf S_{i}^{-1/2} \mathbf S_{i}^{1/2}$ is the identity operator from $\left[H^{-1/2}\left(\partial D\right) \right] ^2$ to $\left[H^{-1/2}\left(\partial D\right) \right] ^2$. Since $\mathbf S_{i}^{1/2}$ is a self adjoint operator, we can derived that $\mathbf S_{i}^{1/2} \mathbf S_{i}^{-1/2}$ is the identity operator from $\left[H^{1/2}\left(\partial D\right) \right] ^2$ to $\left[H^{1/2}\left(\partial D\right) \right] ^2$. Then we have
\begin{equation}\begin{aligned} \label{eq:inequality1}
	\left|\left( \mathbf G_D^{d*} \mathbf {\phi} , \mathbf {K_1}\mathbf{G}_D^{d*} \mathbf {\phi}\right) \right|
	 & =\left|\left( \mathbf G_D^{d*} \mathbf {\phi}, \mathbf S_{i}^{1/2} \mathbf S_{i}^{-1/2} \mathbf K_1 \mathbf S_{i}^{-1/2} \mathbf S_{i}^{1/2} \mathbf {G}_D^{d*} \mathbf {\phi}\right)\right|\\
		& =\left|\left( \mathbf S_{i}^{1/2} \mathbf {G}_D^{d*} \mathbf {\phi}, \mathbf {\tilde K_1}\mathbf S_{i}^{1/2} \mathbf {G}_D^{d*} \mathbf {\phi} \right)\right|,
	\end{aligned}\end{equation}
	where $ \mathbf {\tilde K}_1:=\mathbf S_{i}^{-1/2} \mathbf K_1 \mathbf S_{i}^{-1/2}$ is a compact and self adjoint operator from $\left[L^2\left(\partial D\right) \right] ^2$ to $\left[L^2\left(\partial D\right) \right] ^2$. From the spectral theory of compact and self adjoint operators, $\mathbf {\tilde K}_1$ admits a countable number of real eigenvalues with no non-zero accumulation points. The associated eigenvectors constitute a complete orthogonal basis of $\left[L^2\left(\partial D\right) \right] ^2$. We denote the space $\tilde V_1$ which are formed by the eigenvectors corresponding to the eigenvalues of $\mathbf {\tilde K} _1$ that are greater than $\tilde c_1$, here $\tilde c_1:=c_1/{||\mathbf S^{1/2}_i||^2_{\left[H^{-1/2}\left(\partial D\right) \right] ^2 \to \left[L^2\left(\partial D\right) \right] ^2}}$, thus $\tilde V_1$ is finite-dimensional, and for any $\tilde v_1 \in {\tilde V_1}^{\perp} $, we have the following inequality
	 \begin{align}\label{eq:inequality2}
	 	\left|\left( \tilde v_1,\mathbf {\tilde K}_1 \tilde v_1 \right) \right| \leq \tilde c_1 \Vert \tilde v_1\Vert^2_{\left[L^2\left(\partial D\right) \right] ^2}.
	 \end{align}
	Let $\phi \in \left[L^2\left(\partial D\right) \right] ^2$, then from the definition of the orthogonal complement space, we know that $\mathbf S^{1/2}_i \mathbf G_D^{d*} \phi \in {\tilde V_1}^{\perp}$ if and only if the following equality holds
	$$
	0=\left( \mathbf S^{1/2}_i \mathbf G_D^{d*} \phi,\tilde v_1 \right)=\left( \phi, \mathbf G_D^d \mathbf S^{1/2}_i \tilde v_1 \right) ,\qquad \tilde v_1 \in \tilde V_1.
	$$
	Therefore, $\mathbf S^{1/2}_i \mathbf G_D^{d*} \phi \in {\tilde V_1}^{\perp}$ if and only if $\phi \in {\mathbf G_D^d \mathbf S^{1/2}_i \tilde V_1}^{\perp}$, we can define $V_1:=\mathbf G_D^d \mathbf S^{1/2}_i \tilde V_1 \subseteq \left[L^2\left(\mathbb S\right) \right] ^2$, then we have
	$$
	\rm {dim}\left( V_1 \right) =\rm {dim}\left( \mathbf G_D^d \mathbf S^{1/2}_i \tilde V_1\right) \leq \rm {dim}\left( \tilde V_1\right) < \infty.
	$$
	From  \eqref{eq:inequality1} and \eqref{eq:inequality2} , for any $\phi \in V_1^{\perp}$ we have
\begin{equation}\notag\begin{aligned}
	\left|\left( \mathbf G_D^{d*} \mathbf {\phi} , \mathbf {K}_1\mathbf G_D^{d*} \mathbf {\phi}\right)\right| 
	&\leq \tilde c_1 \left\|\mathbf S^{1/2}_i \mathbf G_D^{d*} \phi\right\|^2_{\left[L^2\left(\partial D\right) \right] ^2}\\
	&\leq c_1\left\|\mathbf G_D^{d*} \phi\right\|^2_{\left[H^{-1/2}\left(\partial D\right) \right] ^2}.
	\end{aligned}\end{equation}

The proof of (2) is analogous and is therefore omitted.

The proof of this lemma is complete.
\end{proof}

\section{The existences of localized wave functions}\label{sec:LocalizedWaveFunction}

In this section, we address the existence of localized wave functions stated in Theorems \ref{LocialWaveFounc} and \ref{LocialWaveFounc2}. Specifically, let $B \subset \mathbb{R}^2$ be a bounded, simply connected Lipschitz domain. If $D$ is a rigid impenetrable scatterer and $B \not\subseteq D$, then we prove the existence of a sequence of functions such that the Herglotz wave operator $\mathbf{H}_B$ acting on this sequence tends to infinity in the $L^2$-norm, whereas the data-to-pattern operator $\mathbf{G}_D^d$ acting on the same sequence tends to zero in the $L^2$-norm. A similar result holds in the case where $D$ is a traction-free impenetrable scatterer. Before proceeding to the main results of this section, we first introduce the operators $\mathbf{H}_B$, $\mathbf{R}_{\tau}$, and $\mathbf{H}_{\tau}$, which will be useful in the subsequent discussion.

Given a bounded simply connected Lipschitz domain \(B \subset \mathbb R^2\), the Herglotz wave operator  $\mathbf{H}_B$ associated with \(B\) is defined similar to \eqref{eq:HerglotzOperatorD}:
\begin{equation}\label{eq:HerglotzOperatorB}
    \mathbf{H}_B \mathbf{g}(\mathbf{x})
    :=
    e^{-\mathrm{i}\pi/4}
    \int_{\mathbb{S}}
    \left\{
        \sqrt{\frac{k_p}{\omega}} \mathbf{d} \, e^{\mathrm{i}k_p \mathbf{d}\cdot\mathbf{x}} g_p(\mathbf{d})
        +
        \sqrt{\frac{k_s}{\omega}} \mathbf{d}^{\perp} \, e^{\mathrm{i}k_s \mathbf{d}\cdot\mathbf{x}} g_s(\mathbf{d})
    \right\}
    ds(\mathbf{d}),
    \qquad \mathbf{x} \in \partial B.
\end{equation}
It is easy to see that
$$
    \mathbf{H}_B : [L^2(\mathbb{S})]^2 \to [H^{1/2}(\partial B)]^2.
$$

Let $\tau \subseteq \partial B$ be a relatively open subset.  We firstly define the \emph{restriction operator}
$$
    \mathbf{R}_{\tau} : [H^{1/2}(\partial B)]^2 \to [H^{1/2}(\tau)]^2
$$
by
\begin{equation}\label{eq:RestrictionOperator}
    \mathbf{R}_{\tau}\mathbf{f} := \mathbf{f}|_{\tau}.
\end{equation}
In order to define the adjoint of $\mathbf{R}_{\tau}$, we first introduce the Sobolev space
\begin{equation}\notag
    [H^{-1/2}_{\mathrm{supp}}(\tau)]^2
    :=
    \left\{
        \mathbf{f} \in [H^{-1/2}(\partial B)]^2 \;:\; \operatorname{supp}\mathbf{f} \subseteq \overline{\tau}
    \right\}.
\end{equation}
Then the adjoint operator
$$
    \mathbf{R}_{\tau}^* : [H^{-1/2}_{\mathrm{supp}}(\tau)]^2 \to [H^{-1/2}(\partial B)]^2
$$
is given by the extension by zero, namely,
\begin{equation}\label{eq:RestrictionOperatorA}
    \mathbf{R}_{\tau}^* \mathbf{f}
    :=
    \begin{cases}
        \mathbf{f}, & \mbox{on } \tau,\\
        \mathbf{0}, & \mbox{on } \partial B \setminus \tau.
    \end{cases}
\end{equation}

We define the operator $\mathbf{H}_{\tau}$ by composing the Herglotz wave operator $\mathbf{H}_B$ defined by \eqref{eq:HerglotzOperatorB} with the restriction operator $\mathbf{R}_{\tau}$ defined by \eqref{eq:RestrictionOperator}, namely,
$$
    \mathbf{H}_{\tau} := \mathbf{R}_{\tau}\mathbf{H}_B.
$$
According to \eqref{eq:HGSoperator} in Lemma~\ref{Fdecomposition}, we immediately obtain the following decompostion of $\mathbf{H}_{\tau}^*$:
\begin{equation}\label{eq:RestriH}
    \mathbf{H}_{\tau}^*
    =
    \mathbf{H}_B^* \mathbf{R}_{\tau}^*
    =
    \sqrt{8\pi\omega}\,\mathbf{G}_B^d \mathbf{S}_B \mathbf{R}_{\tau}^*.
\end{equation}

We now prove in the following theorem that the range of $\mathbf{H}_{\tau}^*$ and the ranges of both data-to-pattern operators $\mathbf{G}_D^d$ and $\mathbf{G}_D^n$ (defined in \eqref{eq:DatatoPatternOperatorD} and \eqref{eq:DatatoPatternOperatorN}) have only the zero element in their intersection. This properties is the analytical mechanism that permits the construction of sequences of Herglotz densities with the divergence properties required in Theorems~\ref{LocialWaveFounc} and~\ref{LocialWaveFounc2}.
    
	\begin{thm}\label{RangeHG}
		Let $B, D \subset \mathbb R^2$ are open and bounded domain with Lipschitz  boundary, if $B \nsubseteq D$, let $\tau \subseteq \partial B \setminus \overline{D} $ is relatively open and $\mathbb R^2 \setminus \left( \overline{\tau \cup D }\right) $ is connected, then we have
			\begin{itemize}
				\item [(1).] When $D$ is a rigid impenetrable scatterer, it holds that
				\begin{align}\notag
					R\left( \mathbf H^*_{\tau} \right) \cap R\left( \mathbf G_D^d\right) =\left\lbrace 0\right\rbrace.
				\end{align}
				\item[(2).] When $D$ is a traction-free impenetrable scatterer, we obtain the following result
				\begin{align}\notag
					R\left( \mathbf H^*_{\tau} \right) \cap R\left( \mathbf G_D^n\right) =\left\lbrace 0\right\rbrace.
				\end{align}
			\end{itemize}
			
	\end{thm}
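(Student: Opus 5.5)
The plan is the standard unique-continuation argument for range intersections, used in acoustic monotonicity proofs (cf.\ \cite{AG20,GB18}). Let $\mathbf{h}\in R(\mathbf H_\tau^*)\cap R(\mathbf G_D^d)$. By \eqref{eq:RestriH} there is $\bm\psi\in[H^{-1/2}_{\mathrm{supp}}(\tau)]^2$ with
\[
\mathbf h=\mathbf H_\tau^*\bm\psi=\sqrt{8\pi\omega}\,\mathbf G_B^d\mathbf S_B\mathbf R_\tau^*\bm\psi .
\]
Hence $\mathbf h$ is the far-field pattern of the radiating field $\mathbf v:=\sqrt{8\pi\omega}\,\mathrm{SL}_B(\mathbf R_\tau^*\bm\psi)$. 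Here $\mathrm{SL}_B$ is the elastic single-layer potential on $\partial B$; the same representation follows directly from the integral form of $\mathbf H_B^*$, without needing the invertibility assumption in Lemma~\ref{SProperty}. Since $\mathbf R_\tau^*\bm\psi$ is supported in $\overline\tau$, $\mathbf v$ solves the Navier equation in $\mathbb R^2\setminus\overline\tau$ and satisfies the Kupradze radiation condition. On the other hand, $\mathbf h=\mathbf G_D^d\mathbf f$ is the far-field pattern of the radiating solution $\mathbf w$ of the exterior Dirichlet problem in $\mathbb R^2\setminus\overline D$ with boundary data $\mathbf f$.

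The next step is to apply Rellich's lemma for elastic waves, which holds separately for the P- and S-parts of the Helmholtz decomposition. It gives $\mathbf v=\mathbf w$ in the unbounded connected component of $\mathbb R^2\setminus(\overline{\tau\cup D})$. By hypothesis this set is connected, so the identity holds on all of $\mathbb R^2\setminus(\overline{\tau\cup D})$.

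Now define a global field $\mathbf u$ by $\mathbf u:=\mathbf w$ in $\mathbb R^2\setminus\overline D$ and $\mathbf u:=\mathbf v$ in $\mathbb R^2\setminus\overline\tau$. This is consistent on the overlap, because $\tau\cap\overline D=\emptyset$. The domains $\mathbb R^2\setminus\overline D$ and $\mathbb R^2\setminus\overline\tau$ cover $\mathbb R^2$ exactly when $\overline\tau\cap\overline D=\emptyset$. I would handle this by first shrinking $\tau$ slightly: it suffices to prove the claim for relatively open $\tau'\Subset\tau$ with $\overline{\tau'}\cap\overline D=\emptyset$, and then use a density or limiting argument. Alternatively, one notes that $\mathbf v$ is real-analytic near $\overline D\setminus\overline\tau$ and $\mathbf w$ is analytic near $\overline\tau$. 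So $\mathbf u$ is an entire radiating solution of the Navier equation, and therefore $\mathbf u\equiv0$. In particular $\mathbf v\equiv 0$ outside $\overline\tau$, hence $\mathbf h=0$.

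The traction-free case (2) is identical, with $\mathbf w$ replaced by the radiating solution of the exterior Neumann problem with data $\mathbf g$. Only the definition of $\mathbf w$ outside $D$ matters; the boundary condition itself plays no role in the gluing.

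The main obstacle is the gluing and regularity near the points where $\overline\tau$ might touch $\partial D$, together with verifying that $\mathbf H_\tau^*\bm\psi$ really is the far field of the single-layer potential of a distribution supported on $\overline\tau$. I would try first to prove the result for $\tau$ with $\overline\tau\cap\overline D=\emptyset$, where the argument is clean. I would then extend to the general case by writing $\tau$ as an increasing union of such sets, and argue that a nonzero intersection element would already appear for some compactly contained piece.
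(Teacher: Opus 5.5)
Your core argument is the paper's proof. $\mathbf h$ is the far field both of $\sqrt{8\pi\omega}\,\mathbf S_B\mathbf R_\tau^*\bm\psi$ and of the exterior Dirichlet (resp.\ Neumann) solution. Rellich's lemma and the connectedness of $\mathbb R^2\setminus\overline{\tau\cup D}$ identify the two fields there, and gluing gives an entire radiating solution, so $\mathbf h=0$. For $\overline\tau\cap\overline D=\emptyset$ this is complete. That is also the only case the paper uses, since in Theorem~\ref{LocialWaveFounc} one may take $\tau$ to be a small arc about a point of $\partial B\setminus\overline D$.

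You are right that the paper's proof ignores the points of $\overline\tau\cap\partial D$, but neither of your repairs works.
\begin{itemize}
\item The exhaustion step has no mechanism behind it. $R(\mathbf H_\tau^*)$ is not the union of the ranges $R(\mathbf H_{\tau'}^*)$ over $\tau'\Subset\tau$. For an arc $\tau$, the equality $\mathbf H_\tau^*\bm\psi=\mathbf H_{\tau'}^*\bm\psi'$ forces $\mathbf R_\tau^*\bm\psi=\mathbf R_{\tau'}^*\bm\psi'$, by Rellich's lemma plus the traction jump relation of the single-layer potential. So a density whose support reaches the endpoints of $\tau$ gives an $\mathbf h$ lying in no smaller range. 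Nothing transfers a nonzero intersection element for $\tau$ to a compactly contained piece.
\item The analyticity alternative fails exactly at the contact points, because $\mathbf w$ is not defined on the $D$-side of $\partial D$.
\end{itemize}

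If you want the statement as written for, say, an arc $\tau$, argue weakly instead; then $E:=\overline\tau\cap\partial D$ is finite. The glued field equals $\mathbf v\in[H^1(D)]^2$ in $D$ and $\mathbf w$ outside $\overline D$, so the only interface is $\partial D$. Consider the jump of its displacement trace, which lies in $[H^{1/2}(\partial D)]^2$, and the jump of its traction, which lies in $[H^{-1/2}(\partial D)]^2$. Both vanish on $\partial D\setminus\overline\tau$, where $\mathbf v$ is a solution on both sides and coincides with $\mathbf w$ outside. Hence both jumps are supported in $E$.
\begin{itemize}
\item The displacement jump is zero because $E$ is a null set.
\item The traction jump is zero because points have zero $H^{1/2}$-capacity on a curve. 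Equivalently, functions vanishing near $E$ are dense in $[H^{1/2}(\partial D)]^2$.
\end{itemize}
Thus the glued field is an $H^1_{\mathrm{loc}}(\mathbb R^2)$ weak solution of the Navier equation, hence entire, and your conclusion follows. The traction-free case is identical. For a general relatively open $\tau$, the honest fix is to add the hypothesis $\overline\tau\cap\overline D=\emptyset$.
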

	
	\begin{proof}
	We provide a detailed proof for the case where $D$ is a rigid impenetrable scatterer. Since the proof for a traction-free impenetrable scatterer is analogous, we omit the proof of part~(2).

	Let $\mathbf h \in 	R\left( \mathbf H^*_{\tau} \right) \cap R\left( \mathbf G_D^d\right)$, since $\mathbf h$ lies in the ranges of $\mathbf H^*_{\tau}$ and $\mathbf G_D^d$, we can deduce that there exists $\mathbf{\phi} _{\tau} \in \left[H^{-1/2} \left(\tau \right) \right] ^2$, $\mathbf f \in \left[H^{1/2}\left(\partial D\right) \right] ^2$, such that
	$$
	\mathbf h= \mathbf H^*_{\tau} \phi _{\tau}= \mathbf G_D^d \mathbf f.
	$$
    Let $\mathbf v _1:= \sqrt{8\pi\omega} \mathbf S_B \mathbf R^*_{\tau} \phi _{\tau} \in \left[ H^1_{loc} \left( \mathbb R^2 \backslash \overline{\tau} \right) \right] ^2$, then $\mathbf v _1$ is the radiating solution to the following equation
	\begin{equation}\notag
			\Delta ^* \mathbf v _1 + \omega ^2 \mathbf v _1 =0,\qquad \mbox{in}\quad \mathbb R^2 \backslash \overline{\tau}.
	\end{equation}
	From \eqref{eq:HGSoperator}, we can infer that $\mathbf H^*_{\tau} \phi _{\tau}$ is $\sqrt{8\pi\omega}$ times the far-field pattern of the single-layer potential $\mathbf S_B$ with density $\mathbf R^*_{\tau} \phi _{\tau}$. Then, we can derive that $\mathbf H^*_{\tau} \phi _{\tau}= \mathbf v^{\infty} _1$. 
	
	On the other hand, if we let $\mathbf v _2 \in \left[ H^1_{loc} \left( \mathbb R^2 \backslash \overline{D} \right) \right] ^2$ be the radiating solution to the Navier equation satisfying $ \mathbf v _2= \mathbf f$ on $\partial D $, then we can obtain $ \mathbf G_D^d \mathbf f= \mathbf v^{\infty} _2$. Clearly, $\mathbf v _2 $ is the radiating solution to the following equation
   \begin{equation}\notag
	\Delta ^* \mathbf v _2 + \omega ^2 \mathbf v _2 =0,\qquad \mbox{in}\quad \mathbb R^2 \backslash \overline{D}.
   \end{equation}
   
	Since $\mathbf v^{\infty} _1=\mathbf H^*_{\tau} \phi _{\tau}= \mathbf h= \mathbf G_D^d \mathbf f= \mathbf v^{\infty} _2$, according to the Rellich lemma for elastic waves, we can derive that
	$$
	\mathbf v _1=\mathbf v _2, \qquad \mbox{in}\quad \mathbb R^2 \backslash \overline{\tau \cup D}.
	$$
	Then we can define $\mathbf v \in \left[ H^1_{loc} \left( \mathbb R^2 \right) \right] ^2$
	\begin{equation}\notag
		\mathbf v:=
		\begin{cases}
			\mathbf v _1=\mathbf v _2,& \mbox{in}\quad \mathbb R^2 \backslash \overline{\tau \cup D},\\
			\mathbf v _1, &  \mbox{in}\quad D ,	\\
			\mathbf v _2, &  \mbox{on}\quad \tau .
		\end{cases}
	\end{equation}
	Then $\mathbf v$ is an entire solution to the Navier equation, so we have
	$$
	\mathbf h= \mathbf v^{\infty} _1= \mathbf v^{\infty} _2 =0.
	$$
	
	The proof is complete.	
	\end{proof}

To construct the sequences of localized wave functions in Theorems \ref{LocialWaveFounc} and \ref{LocialWaveFounc2}, we require two auxiliary results from functional analysis: one relating operator norms to range inclusions (Lemma~\ref{Inquallity}), and another providing a dimension-counting argument for subspaces with prescribed intersection properties (Lemma~\ref{Dim}). These lemmas, adapted from ~\cite{HPS19b}, form the final technical components of the existence proof.

\begin{lem}\label{Inquallity}
	Assume $X, Y, Z$ be Hilbert spaces, let $A_1:X \to Y$ and $A_2: X \to Z$ are linear operators, then the following two statements are equivalent
	\begin{itemize}
		\item [(1).] There exists a constant $C>0$ such that $||A_1 x||_Y \leq C ||A_2 x||_Z, \quad \forall x \in X$.
		\item [(2).] $R \left( A^*_1 \right) \subseteq R \left( A^*_2 \right) $.
	\end{itemize}
\end{lem}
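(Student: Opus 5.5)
The plan is the standard Douglas-type range-inclusion argument. I assume, as in all applications in this paper, that $A_1$ and $A_2$ are bounded, so that the adjoints $A_1^*:Y\to X$ and $A_2^*:Z\to X$ exist and are bounded. I prove the two implications separately: the first uses only the Riesz representation theorem, and the second uses the closed graph theorem.

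\emph{(1) $\Rightarrow$ (2).} Fix $y\in Y$. On the subspace $R(A_2)\subseteq Z$ define $\ell(A_2x):=(A_1x,y)_Y$.

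First, $\ell$ is well defined: if $A_2x=A_2x'$, then (1) gives $\|A_1(x-x')\|_Y\le C\|A_2(x-x')\|_Z=0$. Second, $\ell$ is bounded, since $|\ell(A_2x)|\le C\|y\|_Y\|A_2x\|_Z$.

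Extend $\ell$ by continuity to $\overline{R(A_2)}$ and by zero on its orthogonal complement. This produces a bounded functional on $Z$, so Riesz representation gives $z\in Z$ with
$$(A_2x,z)_Z=(A_1x,y)_Y \quad\text{for all } x\in X.$$
Hence $(x,A_2^*z)_X=(x,A_1^*y)_X$ for all $x$, so $A_1^*y=A_2^*z$. Since $y$ was arbitrary, $R(A_1^*)\subseteq R(A_2^*)$.

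\emph{(2) $\Rightarrow$ (1).} For $y\in Y$, hypothesis (2) guarantees some $z$ with $A_2^*z=A_1^*y$. The restriction of $A_2^*$ to $N(A_2^*)^\perp$ is injective with the same range, so there is a unique $z\in N(A_2^*)^\perp$ with $A_2^*z=A_1^*y$. Set $Ty:=z$. Uniqueness makes $T:Y\to N(A_2^*)^\perp$ linear.

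The main step is to show $T$ is bounded, via the closed graph theorem. Let $y_n\to y$ and $Ty_n\to z$. Then:
\begin{itemize}
\item $z\in N(A_2^*)^\perp$, because this subspace is closed;
\item by boundedness of the adjoints, $A_2^*z=\lim A_2^*Ty_n=\lim A_1^*y_n=A_1^*y$.
\end{itemize}
By uniqueness, $z=Ty$. So the graph of $T$ is closed and $T$ is bounded.

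Finally, for any $x\in X$,
$$\|A_1x\|_Y=\sup_{\|y\|_Y=1}|(x,A_1^*y)_X|=\sup_{\|y\|_Y=1}|(x,A_2^*Ty)_X|=\sup_{\|y\|_Y=1}|(A_2x,Ty)_Z|\le \|T\|\,\|A_2x\|_Z,$$
which is (1) with $C=\|T\|$.

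The only delicate point is this boundedness of $T$. It needs the uniqueness normalization $z\in N(A_2^*)^\perp$ and the boundedness of $A_1^*$ and $A_2^*$. Without the normalization, $T$ would not be a well-defined linear map, and the closed graph theorem could not be applied.
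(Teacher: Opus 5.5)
Your proof is correct. The paper gives no proof of this lemma; it imports it from \cite{HPS19b}, where it is the classical Douglas-type range-inclusion lemma. Your argument is exactly the standard proof behind that citation: Riesz representation of the functional $A_2x\mapsto (A_1x,y)_Y$ for (1)$\Rightarrow$(2), and the closed graph theorem applied to the solution operator $T$ normalized into $N(A_2^*)^\perp$ for (2)$\Rightarrow$(1).

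You were right to add boundedness of $A_1,A_2$. The paper's statement omits it, but without it the adjoints need not be defined on all of $Y$ and $Z$, and it holds in every application in the paper.

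One cosmetic point: if $A_1=0$ then $\|T\|=0$, so take $C=\max\{\|T\|,1\}$ to meet the requirement $C>0$.
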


\begin{lem}\label{Dim}
	Assume $V, Z_1, Z_2 $ be subspaces of a vector space $Z$, if 
	$$
	Z_1 \cap Z_2= \left\lbrace 0 \right\rbrace \quad and \quad Z_1 \subseteq Z_2 +V,
	$$
	then $\rm {dim}\left(  Z_1 \right) \leqslant \rm {dim}\left(  V \right) $.
\end{lem}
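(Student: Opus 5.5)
This is a purely linear-algebraic statement, so I will argue by contradiction with a basis argument. Suppose $\dim(Z_1) > \dim(V)$. Then $V$ is finite-dimensional, say $\dim(V)=m<\infty$, and $Z_1$ contains $m+1$ linearly independent vectors $z_1,\dots,z_{m+1}$. (If $\dim(V)=\infty$ there is nothing to prove.)

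Since $Z_1 \subseteq Z_2+V$, I decompose each of these vectors as
\begin{equation*}
z_j = w_j + v_j, \qquad w_j\in Z_2,\ v_j\in V, \qquad j=1,\dots,m+1.
\end{equation*}
The $m+1$ vectors $v_1,\dots,v_{m+1}$ lie in the $m$-dimensional space $V$, so they are linearly dependent. Hence there are scalars $\alpha_1,\dots,\alpha_{m+1}$, not all zero, with $\sum_j \alpha_j v_j=0$.

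Setting $z:=\sum_j\alpha_j z_j$, we get $z\in Z_1$. On the other hand,
\begin{equation*}
z=\sum_j\alpha_j w_j+\sum_j\alpha_j v_j=\sum_j\alpha_j w_j\in Z_2 .
\end{equation*}
So $z\in Z_1\cap Z_2=\{0\}$, which means $z=0$. This contradicts the linear independence of $z_1,\dots,z_{m+1}$, because the coefficients $\alpha_j$ are not all zero. Therefore $\dim(Z_1)\le\dim(V)$.

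An equivalent route is to note that the restriction to $Z_1$ of the quotient map $Z_2+V\to (Z_2+V)/Z_2$ is injective (its kernel is $Z_1\cap Z_2=\{0\}$), and that $(Z_2+V)/Z_2\cong V/(V\cap Z_2)$ has dimension at most $\dim V$.

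No step is a genuine obstacle. The only point needing care is the infinite-dimensional case, which is trivial, and the fact that the decomposition $z_j=w_j+v_j$ need not be unique; the argument only requires some choice of decomposition, so this does not matter.
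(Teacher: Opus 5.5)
Your proof is correct, including the quotient-map remark. The paper gives no proof of its own and defers to \cite{HPS19b}, where the argument is this same decomposition $z_j = w_j + v_j$: one shows directly that the $V$-components of any linearly independent family in $Z_1$ stay linearly independent, because $\sum_j \alpha_j v_j = 0$ forces $\sum_j \alpha_j z_j \in Z_1 \cap Z_2 = \{0\}$. Your contradiction argument is just that reasoning phrased in contrapositive form.
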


Following the above introduction, we will demonstrate the existence of localized wave functions for the rigid impenetrable scatter and traction-free impenetrable scatter separately through the following two theorems.

\begin{thm}\label{LocialWaveFounc}
 Let $B, D  \subseteq \mathbb R^2$ be open and bounded domain with  Lipschitz boundary such that $\mathbb R^2 \backslash \overline {D}$ is connected. Assume $B \nsubseteq D$, then for any finite-dimensional subspace $V_1 \subseteq \left[L^{2}\left(\mathbb S\right) \right] ^2 $ , there exists a sequence $\left\lbrace  \mathbf f_n \right\rbrace  \subseteq  V_1^{\perp} $ such that
 $$
 \left\|\mathbf H_B \mathbf f_n\right\|_{\left[H^{1/2}\left(\partial B\right) \right] ^2 } \to \infty \quad and \quad \left\| \mathbf G_D^{d*} \mathbf f_n\right\|_{\left[H^{-1/2}\left(\partial D\right) \right] ^2 } \to 0, \qquad n\to \infty.
 $$
\end{thm}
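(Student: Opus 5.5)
We argue by contradiction, following the standard localized-potentials scheme of \cite{HPS19b} and using Theorem~\ref{RangeHG}, Lemma~\ref{Inquallity} and Lemma~\ref{Dim}. Since $B\nsubseteq D$ and $D$ is Lipschitz with connected exterior, we first pick a nonempty relatively open arc $\tau\subseteq\partial B\setminus\overline{D}$, small enough that $\mathbb R^2\setminus(\overline{\tau\cup D})$ is connected. Because $\mathbf H_\tau=\mathbf R_\tau\mathbf H_B$ and $\mathbf R_\tau$ is bounded from $[H^{1/2}(\partial B)]^2$ to $[H^{1/2}(\tau)]^2$, it suffices to build $\mathbf f_n\in V_1^\perp$ with $\|\mathbf H_\tau\mathbf f_n\|\to\infty$ and $\|\mathbf G_D^{d*}\mathbf f_n\|_{[H^{-1/2}(\partial D)]^2}\to0$. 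Let $P$ be the orthogonal projection of $[L^2(\mathbb S)]^2$ onto $V_1^\perp$. It then suffices to show that no constant $C>0$ satisfies
\begin{align*}
\|\mathbf H_\tau P\mathbf g\|\le C\|\mathbf G_D^{d*}P\mathbf g\|,\qquad \forall\, \mathbf g\in[L^2(\mathbb S)]^2 .
\end{align*}
Indeed, if no such $C$ exists, we choose $\mathbf g_n$ with $\|\mathbf H_\tau P\mathbf g_n\|>n^2\|\mathbf G_D^{d*}P\mathbf g_n\|$. In particular $\mathbf H_\tau P\mathbf g_n\neq 0$, so we may rescale and set $\mathbf f_n:=P\mathbf g_n/(n^{-1}\|\mathbf H_\tau P\mathbf g_n\|)$. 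Then $\|\mathbf H_\tau\mathbf f_n\|=n\to\infty$ and $\|\mathbf G_D^{d*}\mathbf f_n\|<1/n\to0$.

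Suppose instead that such a $C$ exists. Lemma~\ref{Inquallity}, applied with $A_1=\mathbf H_\tau P$ and $A_2=\mathbf G_D^{d*}P$, gives $R(P\mathbf H_\tau^*)\subseteq R(P\mathbf G_D^d)$. Now $P=I-Q$, where $Q$ projects onto $V_1$. So for any $\psi$ there is $\mathbf f$ with $\mathbf H_\tau^*\psi-\mathbf G_D^d\mathbf f=Q(\mathbf H_\tau^*\psi-\mathbf G_D^d\mathbf f)\in V_1$. This yields the inclusion
\begin{align*}
R(\mathbf H_\tau^*)\subseteq R(\mathbf G_D^d)+V_1 .
\end{align*}
By Theorem~\ref{RangeHG}(1), $R(\mathbf H_\tau^*)\cap R(\mathbf G_D^d)=\{0\}$. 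Lemma~\ref{Dim} therefore forces $\dim R(\mathbf H_\tau^*)\le\dim V_1<\infty$.

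The remaining step, which I expect to be the main obstacle, is to show that $R(\mathbf H_\tau^*)$ is infinite-dimensional. By \eqref{eq:RestriH}, $\mathbf H_\tau^*=\sqrt{8\pi\omega}\,\mathbf G_B^d\mathbf S_B\mathbf R_\tau^*$, so it suffices to show that the map $\phi_\tau\mapsto$ (far field of the single-layer potential with density $\mathbf R_\tau^*\phi_\tau$) is injective on the infinite-dimensional space $[H^{-1/2}_{\rm supp}(\tau)]^2$. To prove this, suppose the far field vanishes. By the Rellich lemma the potential vanishes in the connected set $\mathbb R^2\setminus\overline{\tau}$, since $\tau$ is a proper arc. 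Both one-sided traces then vanish, and by the jump relation for the traction of the single-layer potential the density $\phi_\tau$ is zero.

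If one prefers to avoid the Lipschitz jump relations on an open arc, an alternative route is available: $\dim R(\mathbf H_\tau^*)=\dim R(\mathbf H_\tau)$, and $R(\mathbf H_\tau)$ is infinite-dimensional because Herglotz waves restricted to an arc form an infinite-dimensional family. For instance, plane P-waves with distinct directions restrict to linearly independent functions on $\tau$ by analyticity. This contradiction shows that no such $C$ exists, which completes the argument.
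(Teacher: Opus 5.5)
Your proof is correct and uses the same skeleton as the paper's: choose an arc $\tau\subseteq\partial B\setminus\overline D$, invoke Theorem~\ref{RangeHG}, combine Lemma~\ref{Dim} with Lemma~\ref{Inquallity}, and reduce to showing that $R(\mathbf H_\tau^*)$ is infinite-dimensional. It differs in two places.

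\textbf{Handling of $V_1$.} The paper applies Lemma~\ref{Inquallity} to $\mathbf H_\tau$ and the stacked operator $\mathbf g\mapsto(\mathbf G_D^{d*}\mathbf g,\mathbf P_1\mathbf g)$, whose adjoint has range $R(\mathbf G_D^d)+V_1$. This yields densities $\tilde{\mathbf g}_n$ that are not yet in $V_1^\perp$, and the $V_1$-component is then removed by two triangle inequalities. You instead compose both operators with the projection $P$ onto $V_1^\perp$. Your $\mathbf f_n$ therefore lie in $V_1^\perp$ from the start, with exact norm control. The price is the short algebraic step from $R(P\mathbf H_\tau^*)\subseteq R(P\mathbf G_D^d)$ to $R(\mathbf H_\tau^*)\subseteq R(\mathbf G_D^d)+V_1$. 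The two are equivalent bookkeeping.

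\textbf{Infinite-dimensionality of $R(\mathbf H_\tau^*)$.} This difference is more substantive. The paper obtains it by \emph{assuming} that $\omega^2$ is not a Dirichlet eigenvalue of $-\Delta^*$ in $B$. That makes $\mathbf S_B$, and hence $\mathbf G_B^d\mathbf S_B\mathbf R_\tau^*$, injective, but this hypothesis is not part of the theorem. Your argument runs through the Rellich lemma, unique continuation across $\partial B\setminus\overline\tau$ (where the density vanishes), and the jump of the traction of the single-layer potential. It shows that $\mathbf H_\tau^*$ is injective on $[H^{-1/2}_{\mathrm{supp}}(\tau)]^2$ for every $\omega$. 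So your version proves the statement as written, at the modest cost of using the traction jump relation for the elastic single-layer potential on a Lipschitz boundary, which is standard.

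\textbf{Your fallback route.} Its stated justification is inaccurate. Plane P-waves with distinct directions need not restrict to linearly independent functions on $\tau$. If $\tau$ lies on the line $\mathbf d\cdot\mathbf x=0$, then $\mathbf d\, e^{ik_p\mathbf d\cdot\mathbf x}$ and $(-\mathbf d)\, e^{ik_p(-\mathbf d)\cdot\mathbf x}$ sum to zero there. Infinitely many independent restrictions still exist, for example those with pairwise distinct tangential frequencies, but this would have to be argued. Also, plane waves are not Herglotz waves with $L^2$ densities, so you would additionally need an approximation argument together with the closedness of a finite-dimensional range. Since your main route is complete, this does not affect the proof.
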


\begin{proof}
Let $B, D  \subseteq \mathbb R^2$ is open and bounded domain with Lipschitz boundary such that $\mathbb R^2 \backslash \overline {D}$ is connected. Assume $B \nsubseteq D$, let $V_1 \subseteq \left[L^{2}\left(\mathbb S\right) \right] ^2 $ is a finite-dimensional subspace, therefore, the orthogonal projection onto $V_1$ is well-defined, and we denote it as $\mathbf P_1 :\left[L^{2}\left(\mathbb S\right) \right] ^2 \to \left[L^{2}\left(\mathbb S\right) \right] ^2$. 

Since $B \nsubseteq D $ and $\mathbb R^2 \backslash \overline {D}$ is connected, there exists a relatively open $\tau \subseteq \partial B\backslash \overline{D}$ such that $\mathbb R^2 \backslash \overline {\left( \tau \cup D\right) }$ is connected, from Theorem \ref{RangeHG} we have
\begin{align}\label{eq:RangeHtG}
R\left( \mathbf H^*_{\tau} \right) \cap R\left( \mathbf G_D^d\right) =\left\lbrace 0\right\rbrace .
\end{align}
We now turn our attention to \eqref{eq:RestriH}. Assume that $\omega^2$ is not a Dirichlet eigenvalue of the Navier operator $\Delta^*$ in $B$. Then both the single-layer potential operator $\mathbf{S}_B$ and the data-to-pattern operator $\mathbf{G}_B^d$ associated with $B$ are injective.

Moreover, the range of the extension operator $\mathbf R^*_{\tau} $ is infinite-dimensional, which implies that $ R \left( \mathbf H^*_{\tau} \right) = R \left( \sqrt{8\pi\omega}\mathbf{G}^d_B \mathbf S_B  \mathbf R^*_{\tau}   \right) $ is infinite-dimensional. Therefore, according to Lemma \ref{Dim} and \eqref{eq:RangeHtG} , we can obtain
$$
R \left( \mathbf H^*_{\tau} \right) \nsubseteq R\left( \mathbf G_D^d\right) +V_1=  R \left( \left[ \mathbf G_D^d \quad \mathbf P_1 \right] \right).
$$
Hence, utilizing Lemma \ref{Inquallity}, we can deduce that there does not exist a constant $C_1 > 0$ such that
\begin{equation}\begin{aligned}\label{eq:inver}
|| \mathbf H_{\tau} \mathbf g ||^2_{\left[H^{1/2}\left(\tau \right) \right] ^2 }
&\leq C_1^2 \left\| 
\begin{bmatrix}
	\mathbf G_D^{d*} \\ \mathbf P_1
\end{bmatrix}
\mathbf g \right\| ^2_{\left[H^{-1/2}\left(\partial D \right) \right] ^2  \times \left[L^{2}\left(\mathbb S \right) \right] ^2}\\
&=  C_1^2 \left( \left\| \mathbf G_D^{d*} \mathbf g \right\|^2_{\left[H^{-1/2}\left(\partial D \right) \right] ^2}+\left\| \mathbf P_1 \mathbf g \right\|^2_{\left[L^{2}\left(\mathbb S \right) \right] ^2} \right) .
\end{aligned}\end{equation}
Since $\mathbf P_1$ is an orthogonal projection operator, it follows that $\mathbf P_1$ is a self-adjoint operator, that is, $\mathbf P_1 = \mathbf P_1 ^*$. Therefore, from \eqref{eq:inver} for any $n \in \mathbb N$, there exists a vector valued function $\mathbf g_n \in \left[L^{2}\left(\mathbb S \right) \right] ^2  $ such that
$$
\left\| \mathbf H_{\tau} \mathbf g_n \right\|^2_{\left[H^{1/2}\left(\tau \right) \right] ^2 } > n^2 \left( \left\| \mathbf G_D^{d*} \mathbf g_n \right\|^2_{\left[H^{-1/2}\left(\partial D \right) \right] ^2}+\left\| \mathbf P_1 \mathbf g_n \right\|^2_{\left[L^{2}\left(\mathbb S \right) \right] ^2} \right).
$$
For any $n \in \mathbb N$, we denote $M_n:=\left\| \mathbf G_D^{d*} \mathbf g_n \right\|^2_{\left[H^{-1/2}\left(\partial D \right) \right] ^2}+\left\| \mathbf P_1 \mathbf g_n \right\|^2_{\left[L^{2}\left(\mathbb S \right) \right] ^2}$,  $\tilde{\mathbf g}_n:=\mathbf g_n /\left(\sqrt{n M_n} \right) $. On one hand, we have
\begin{equation}\notag 
\begin{aligned}
\left\| \mathbf H_{\tau} \tilde{\mathbf g}_n\right\|^2_{\left[H^{1/2}\left(\tau \right) \right] ^2 }
&= \frac{1}{n M_n} \left\| \mathbf H_{\tau} \mathbf g_n \right\|^2_{\left[H^{1/2}\left(\tau \right) \right] ^2 } \\
&>\frac{n}{M_n} \left( \left\|  \mathbf G_D^{d*} \mathbf g_n \right\|^2_{\left[H^{-1/2}\left(\partial D \right) \right] ^2}+\left\|  \mathbf P_1 \mathbf g_n \right\|^2_{\left[L^{2}\left(\mathbb S \right) \right] ^2} \right) \\
&= n,
\end{aligned}\end{equation}
which implies that 
$$
\left\| \mathbf H_{\tau} \tilde{\mathbf g}_n \right\|^2_{\left[H^{1/2}\left(\tau \right) \right] ^2 } > n \to \infty,\quad   n \to \infty .
$$

On the other hand, 
\begin{equation}\notag
\begin{aligned}
		\left\| \mathbf G_D^{d*} \tilde{\mathbf g}_n \right\|^2_{\left[H^{-1/2}\left(\partial D \right) \right] ^2}+\left\| \mathbf P_1 \tilde{\mathbf g}_n \right\|^2_{\left[L^{2}\left(\mathbb S \right) \right] ^2}
		&= \frac{1}{n M_n} \left(\left\| \mathbf G_D^{d*} \mathbf g_n \right\|^2_{\left[H^{-1/2}\left(\partial D \right) \right] ^2}+\left\| \mathbf P_1 \mathbf g_n \right\|^2_{\left[L^{2}\left(\mathbb S \right) \right] ^2} \right) \\
		&= \frac{1}{n}.
\end{aligned}\end{equation}
Thus, it yields that 
$$
\left\| \mathbf G_D^{d*} \tilde{\mathbf g}_n \right\|^2_{\left[H^{-1/2}\left(\partial D \right) \right] ^2}+\left\| \mathbf P_1 \tilde{\mathbf g}_n \right\|^2_{\left[L^{2}\left(\mathbb S \right) \right] ^2}= \frac{1}{n} \to 0, \quad n \to \infty .
$$
In summary, when $n \to \infty $, it can be deduced that
\begin{equation}\begin{aligned}\label{eq:HGPasymptotic}
  \left\| \mathbf H_{\tau} \tilde{\mathbf g}_n \right\|^2_{\left[H^{1/2}\left(\tau \right) \right] ^2 } \to \infty ,\quad \left\| \mathbf G_D^{d*} \tilde{\mathbf g}_n \right\|^2_{\left[H^{-1/2}\left(\partial D \right) \right] ^2} \to 0,\quad \left\| \mathbf P_1 \tilde{\mathbf g}_n \right\|^2_{\left[L^{2}\left(\mathbb S \right) \right] ^2} \to 0.
 \end{aligned}\end{equation}

For any $n \in \mathbb{N}$, we define $\mathbf{f}_n := \tilde{\mathbf{g}}_n - \mathbf{P}_1 \tilde{\mathbf{g}}_n$. By applying the triangle inequality, we obtain
\begin{equation}\begin{aligned}\label{eq:HInequality}
	    \left\| \mathbf H_{\tau} \mathbf f_n \right\|_{\left[H^{1/2}\left(\tau \right) \right]^2 }
		&\geq \left\| \mathbf H_{\tau} \tilde{\mathbf g}_n \right\|_{\left[H^{1/2}\left(\tau \right) \right]^2 } - \left\| \mathbf H_{\tau} \mathbf P_1 \tilde{\mathbf g}_n \right\|_{\left[H^{1/2}\left(\tau \right) \right]^2 }\\
		&\geq \left\| \mathbf H_{\tau} \tilde{\mathbf g}_n \right\|_{\left[H^{1/2}\left(\tau \right) \right]^2 } - \left\| \mathbf H_{\tau} \right\|_{ \left[L^{2}\left(\mathbb S \right) \right]^2 \to \left[H^{1/2} \left(\tau \right) \right] ^2 } \left\|\mathbf P_1 \tilde{\mathbf g}_n \right\|_{\left[L^{2} \left(\mathbb S \right) \right] ^2}.
\end{aligned}\end{equation}
and
\begin{equation}\begin{aligned}\label{eq:GInequality}
		\left\| \mathbf G_D^{d*} \mathbf f_n \right\|_{\left[H^{-1/2}\left(\partial D \right) \right]^2 }
		&\leq \left\| \mathbf G_D^{d*} \tilde{\mathbf g }_n\right\|_{\left[H^{-1/2}\left(\partial D \right) \right]^2 }+ \left\| \mathbf G_D^{d*} \mathbf P_1 \tilde{\mathbf g }_n\right\|_{\left[H^{-1/2}\left(\partial D \right) \right]^2 } \\
		&\leq \left\| \mathbf G_D^{d*} \tilde{\mathbf g }_n\right\|_{\left[H^{-1/2}\left(\partial D \right) \right]^2 }+ \left\| \mathbf G_D^{d*}\right\|_{\left[L^{2} \left(\mathbb S \right) \right] ^2 \to \left[H^{-1/2}\left(\partial D \right) \right]^2} \left\|\mathbf P_1 \tilde{\mathbf g }_n\right\|_{\left[L^{2} \left(\mathbb S \right) \right] ^2}.
\end{aligned}\end{equation}

It follows from \eqref{eq:HGPasymptotic}, \eqref{eq:HInequality}, and \eqref{eq:GInequality} that 
$\|\mathbf{H}_{\tau} \mathbf{f}_n\|_{[H^{1/2}(\tau)]^2} \to \infty$ and 
$\|(\mathbf{G}_D^d)^* \mathbf{f}_n\|_{[H^{-1/2}(\partial D)]^2} \to 0$ as $n \to \infty$. 
Recalling the definition $\mathbf{H}_{\tau} = \mathbf{R}_{\tau} \mathbf{H}_B$, we have
\begin{equation}\notag
\begin{aligned}
\|\mathbf{H}_{\tau} \mathbf{f}_n\|_{[H^{1/2}(\tau)]^2} 
&= \|\mathbf{R}_{\tau} \mathbf{H}_B \mathbf{f}_n\|_{[H^{1/2}(\partial B)]^2} \\
& \leq \left\|\mathbf R_{\tau}\right\|_{\left[H^{1/2}\left(\partial B \right) \right]^2 \to \left[H^{1/2}\left(\tau \right) \right]^2 } \left\| \mathbf H_{B} \mathbf f_n \right\|_{\left[H^{1/2}\left(\partial B \right) \right]^2 },
\end{aligned}
\end{equation}
which implies
\begin{equation}\notag 
\|\mathbf{H}_B \mathbf{f}_n\|_{[H^{1/2}(\partial B)]^2} 
\geq \frac{\|\mathbf{H}_{\tau} \mathbf{f}_n\|_{[H^{1/2}(\tau)]^2}}{\left\|\mathbf R_{\tau}\right\|_{\left[H^{1/2}\left(\partial B \right) \right]^2 \to \left[H^{1/2}\left(\tau \right) \right]^2 }}.
\end{equation}
Since $\|\mathbf{H}_{\tau} \mathbf{f}_n\|_{[H^{1/2}(\tau)]^2} \to \infty$ as $n \to \infty$ and the norm of the restriction operator $\mathbf{R}_{\tau}$ is bounded, it follows that
\begin{equation}\notag
\|\mathbf{H}_B \mathbf{f}_n\|_{[H^{1/2}(\partial B)]^2} \to \infty \quad \text{as } n \to \infty.
\end{equation}
This, together with \eqref{eq:GInequality}, completes the proof.

\end{proof}

The proof of Theorem~\ref{LocialWaveFounc2} is analogous to that of Theorem~\ref{LocialWaveFounc} and follows by repeating the similar arguments. For the sake of brevity, the detailed proof is omitted here.
\begin{thm}\label{LocialWaveFounc2}
	Let $B, D  \subseteq \mathbb R^2$ be open and bounded domain with Lipschitz boundary such that $\mathbb R^2 \backslash \overline {D}$ is connected. Assume $B \nsubseteq D$, then for any finite-dimensional subspace $V_2 \subseteq \left[L^{2}\left(\mathbb S\right) \right] ^2 $ , there exists a sequence $\left\lbrace  \mathbf h_n \right\rbrace  \subseteq  V_2^{\perp} $ such that
	$$
	\left\|\mathbf H_B \mathbf h_n\right\|_{\left[H^{1/2}\left(\partial B\right) \right] ^2 } \to \infty \quad and \quad \left\| \mathbf G_D^{n*} \mathbf h_n\right\|_{\left[H^{1/2}\left(\partial D\right) \right] ^2 } \to 0, \qquad n\to \infty.
	$$
\end{thm}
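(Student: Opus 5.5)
The plan is to mirror the proof of Theorem~\ref{LocialWaveFounc}, replacing $\mathbf G_D^d$ by $\mathbf G_D^n$ and the space $[H^{-1/2}(\partial D)]^2$ by $[H^{1/2}(\partial D)]^2$. Since $B\nsubseteq D$ and $\mathbb R^2\setminus\overline D$ is connected, I will first choose a relatively open $\tau\subseteq\partial B\setminus\overline D$ with $\mathbb R^2\setminus\overline{\tau\cup D}$ connected. Part~(2) of Theorem~\ref{RangeHG} then gives $R(\mathbf H_\tau^*)\cap R(\mathbf G_D^n)=\{0\}$. Part~(2) is stated without proof, so I will check it quickly. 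Take $\mathbf v_1=\sqrt{8\pi\omega}\,\mathbf S_B\mathbf R_\tau^*\phi_\tau$, and let $\mathbf v_2$ be the radiating solution outside $D$ with traction $\mathbf g$. Equal far fields and Rellich's lemma give $\mathbf v_1=\mathbf v_2$ in $\mathbb R^2\setminus\overline{\tau\cup D}$. Gluing as before yields an entire radiating solution, so the common far field vanishes.

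Next I need $R(\mathbf H_\tau^*)$ to be infinite-dimensional. I will use \eqref{eq:RestriH}, $\mathbf H_\tau^*=\sqrt{8\pi\omega}\,\mathbf G_B^d\mathbf S_B\mathbf R_\tau^*$. Assuming, as in the proof of Theorem~\ref{LocialWaveFounc}, that $\omega^2$ is not a Dirichlet eigenvalue in $B$, the operators $\mathbf S_B$ and $\mathbf G_B^d$ are injective. Since $\mathbf R_\tau^*$ has infinite-dimensional range, so does $\mathbf H_\tau^*$. Let $\mathbf P_2$ be the orthogonal projection onto $V_2$. Lemma~\ref{Dim} then gives
\[
R(\mathbf H_\tau^*)\nsubseteq R(\mathbf G_D^n)+V_2=R\bigl([\mathbf G_D^n\ \ \mathbf P_2]\bigr).
\]
By Lemma~\ref{Inquallity}, no constant $C$ satisfies
\[
\|\mathbf H_\tau\mathbf g\|_{[H^{1/2}(\tau)]^2}\le C\bigl(\|\mathbf G_D^{n*}\mathbf g\|^2_{[H^{1/2}(\partial D)]^2}+\|\mathbf P_2\mathbf g\|^2\bigr)^{1/2}.
\]
Here $\mathbf G_D^{n*}$ maps into $[H^{1/2}(\partial D)]^2$, the dual of the domain $[H^{-1/2}(\partial D)]^2$ of $\mathbf G_D^n$.

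For each $n$ I then pick $\mathbf g_n$ violating this inequality with constant $n$. Let $M_n=\|\mathbf G_D^{n*}\mathbf g_n\|^2+\|\mathbf P_2\mathbf g_n\|^2$, and rescale $\tilde{\mathbf g}_n=\mathbf g_n/\sqrt{nM_n}$. Then $\|\mathbf H_\tau\tilde{\mathbf g}_n\|^2>n$, while $\|\mathbf G_D^{n*}\tilde{\mathbf g}_n\|^2+\|\mathbf P_2\tilde{\mathbf g}_n\|^2=1/n$. Setting $\mathbf h_n:=\tilde{\mathbf g}_n-\mathbf P_2\tilde{\mathbf g}_n\in V_2^\perp$, the triangle inequality and boundedness of $\mathbf H_\tau$ and $\mathbf G_D^{n*}$ give $\|\mathbf H_\tau\mathbf h_n\|\to\infty$ and $\|\mathbf G_D^{n*}\mathbf h_n\|_{[H^{1/2}(\partial D)]^2}\to0$. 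Finally, $\|\mathbf H_\tau\mathbf h_n\|\le\|\mathbf R_\tau\|\,\|\mathbf H_B\mathbf h_n\|$ gives $\|\mathbf H_B\mathbf h_n\|_{[H^{1/2}(\partial B)]^2}\to\infty$.

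The one step that is not purely formal is Theorem~\ref{RangeHG}(2), where the Dirichlet argument is adapted to the Neumann exterior problem. The main check is that $M_n\neq0$, which holds because $\mathbf H_\tau\mathbf g_n\neq0$. A second point is the auxiliary assumption that $\omega^2$ is not a Dirichlet eigenvalue in $B$. This is only a condition on the probing domain $B$, independent of the boundary condition on $D$, so it carries over unchanged. If it is to be avoided, I would enlarge $R(\mathbf H_\tau^*)$ by noting that $\mathbf G_B^d\mathbf S_B\mathbf R_\tau^*\phi$ is the far field of a single layer on $\tau$. By Rellich's lemma this far field is injective in $\phi$, since a single layer potential on an open arc with zero far field vanishes off $\overline\tau$ and hence has zero density by the jump relations.
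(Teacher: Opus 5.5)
Your proposal is correct and follows the paper's intended proof: the paper omits it as ``analogous'' to Theorem~\ref{LocialWaveFounc}, and you carry that analogy out. You use Theorem~\ref{RangeHG}(2), whose Rellich-and-gluing verification you do correctly, then Lemmas~\ref{Dim} and~\ref{Inquallity}, rescaling and projection onto $V_2^\perp$, with $\mathbf G_D^{n*}$ now valued in $[H^{1/2}(\partial D)]^2$. One small repair: $\mathbf H_\tau\mathbf g_n\neq 0$ does not by itself give $M_n\neq 0$, but if $M_n=0$ then $\mathbf g_n\in V_2^\perp\cap N(\mathbf G_D^{n*})$ with $\mathbf H_\tau\mathbf g_n\neq 0$, so the multiples $k\mathbf g_n$, $k\to\infty$, already give the required sequence (alternatively, $\mathbf G_D^{n*}$ is injective because $R(\mathbf G_D^n)$ is dense).
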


Theorems~\ref{LocialWaveFounc} and~\ref{LocialWaveFounc2} demonstrate that: whenever $B \not\subseteq D$, Herglotz wave fields exist that concentrate on $\partial B$ while becoming asymptotically invisible to $\partial D$. Together with the factorization of Section~\ref{sec:far-field Operator decomposition} and the boundary-space estimates of Lemma~\ref{DatatoPatternProperty}, this allows the geometric condition \(B \subseteq D\) to be reformulated as a finite-dimensional spectral condition on the monotonicity operator, which serves as the cornerstone for the shape characterization theorems derived in the next section.

\section{Characterizations of the impenetrable scatterer with reconstruction algorithms}\label{sec:Characterization of the impenetrable scatterer}

In this section, we present the shape characterization theorems for impenetrable scatterers and their numerical implementation, which are organized into three separate subsections.
In the first subsection, we establish the main theoretical results in Theorems~\ref{Characterofimpenetrable scatterer} and~\ref{Characterofimpenetrable scatterer2}. Specifically, when $D$ is a rigid impenetrable scatterer, we employ a probing domain $B$ to detect $D$ by examining the number of negative eigenvalues of the far-field operator $\mathbf{F}^d$ and of the \emph{probing operator} $\mathbf{H}_B^* \mathbf{H}_B$ in Theorem~\ref{Characterofimpenetrable scatterer}. The operators $\mathbf{F}^d$ and $\mathbf{H}_B$ are defined by \eqref{eq:FarfieldOperator} and \eqref{eq:HerglotzOperatorB}, respectively. A parallel characterization is derived for a traction-free impenetrable scatterer in Theorem~\ref{Characterofimpenetrable scatterer2}.
In the second subsection, we focus on the numerical discretization of the far-field operator $\mathbf{F}$ and the probing operator $\mathbf{H}_B^* \mathbf{H}_B$. And in 
the third subsection, we devise a counting-based indicator function $I_{{count}}$ based on the shape characterization criteria established by Theorems~\ref{Characterofimpenetrable scatterer} and~\ref{Characterofimpenetrable scatterer2}. Finally, the location and shape of the scatterer can be quantitatively reconstructed by evaluating the introduced indicator function $I_{{count}}$ at the sampling points. The detailed description of the proposed reconstruction method is displayed in Algorithm \ref{alg:discrete monotonicity}.

\subsection{Monotonicity shape characterization theorem}
First, we introduce a boundary value mapping operator $\mathbf{M}_{B \to D}$ and establish its compactness. Let $B \subset \mathbb{R}^2$ be a simply connected and bounded Lipschitz domain such that $\overline{B} \subset D$. We define the \emph{boundary value mapping operator} $\mathbf{M}_{B \to D} : [H^{1/2}(\partial B)]^2 \to [H^{1/2}(\partial D)]^2$ by
\begin{equation*}
    \mathbf{M}_{B \to D} \mathbf{f} := \mathbf{u}|_{\partial D},
\end{equation*}
where $\mathbf{u} \in [H^1_{\mathrm{loc}}(\mathbb{R}^2 \setminus \overline{B})]^2$ is the unique solution to the following exterior Dirichlet boundary value problem:
\begin{equation}\label{eq:ueq}
    \begin{cases}
        \Delta^* \mathbf{u} + \omega^2 \mathbf{u} = \mathbf{0}, & \text{in } \mathbb{R}^2 \setminus \overline{B}, \\
        \mathbf{u} = \mathbf{f}, & \text{on } \partial B, \\
        \lim\limits_{\rho \to \infty} \rho^{1/2} \left( \frac{\partial \mathbf{u}_\beta}{\partial \rho} - i k_\beta \mathbf{u}_\beta \right) = 0, & \rho = |\mathbf{x}|, \, \beta = p, s.
    \end{cases}
\end{equation}
\begin{rem}
By the definition of the data-to-pattern operator \(\mathbf{G}_D^d\) associated with \(D\), we have
\begin{equation}\notag
    \mathbf{G}_D^d \mathbf{M}_{B \to  D} \mathbf{f} = \mathbf{w}^{\infty},
\end{equation}
where $\mathbf{w} \in [H^1_{\mathrm{loc}}(\mathbb{R}^2 \setminus \overline{D})]^2$ is the unique solution to the following exterior Dirichlet boundary value problem:
\begin{equation}\notag
    \begin{cases}
        \Delta^* \mathbf{w} + \omega^2 \mathbf{w} = \mathbf{0}, & 	\text{in } \mathbb{R}^2 \setminus \overline{D}, \\
        \mathbf{w} = \mathbf{M}_{B 	\to D} \mathbf{f}, & 	\text{on } \partial D, \\
        \lim\limits_{\rho 	\to \infty} \rho^{1/2} \left( \frac{\partial \mathbf{w}_\beta}{\partial \rho} - i k_\beta \mathbf{w}_\beta \right) = 0, & \rho = |\mathbf{x}|, \, \beta = p, s.
    \end{cases}
\end{equation}
Since $\overline{B} \subset D$, the uniqueness of the solution to the exterior Dirichlet problem implies that $\mathbf{w} = \mathbf{u}$ in $\mathbb{R}^2 \setminus \overline{D}$, where $\mathbf{u}$ is defined in \eqref{eq:ueq}. Consequently, their far-field patterns coincide, namely $\mathbf{w}^{\infty} = \mathbf{u}^{\infty}$. Recall that $\mathbf{G}^d_B \mathbf{f} = \mathbf{u}^{\infty}$, where $\mathbf{G}^d_B$ denotes the data-to-pattern operator associated with \(B\). We then obtain
\begin{equation}\notag
    \mathbf{G}_D^d \mathbf{M}_{B 	\to D} \mathbf{f} = \mathbf{w}^{\infty} = \mathbf{u}^{\infty} = \mathbf{G}_B^d \mathbf{f},
\end{equation}
which leads to the operator identity 
\begin{equation}\label{eq:GM relation}
\mathbf{G}_B^d = \mathbf{G}_D^d \mathbf{M}_{B \to D}.
\end{equation}
\end{rem}

In Lemma~\ref{BoundaryVCompac}, we establish that $\mathbf{M}_{B \to D}$ is a compact operator. This result follows from the interior regularity of the Navier operator, the trace theorem, and the compact embedding properties of the associated Sobolev spaces.

\begin{lem}\label{BoundaryVCompac}
	Let $B \subset \mathbb{R}^2$ be a simply connected and bounded Lipschitz domain such that $\overline{B} \subset D$, the boundary value mapping operator $ \mathbf M_{B \to D}$ is a compact operator from $ \left[H^{1/2}\left(\partial B \right) \right]^2 $ to $\left[H^{1/2}\left(\partial D \right) \right]^2$.
\end{lem}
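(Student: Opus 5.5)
The plan is to write $\mathbf M_{B\to D}$ as a composition $\gamma_D\circ\mathbf E_B$: first a bounded solution operator into a space of smooth functions near $\partial D$, then a trace map. Compactness will come from the gap between $\partial B$ and $\partial D$. Since $\overline B\subset D$ and both sets are bounded, $\delta:=\operatorname{dist}(\partial B,\partial D)>0$. Fix an open annular neighbourhood $U$ of $\partial D$, for instance $U:=\{\mathbf x:\operatorname{dist}(\mathbf x,\partial D)<\delta/2\}$. Then $\overline U\subset\mathbb R^2\setminus\overline B$, so $U$ has positive distance from $\partial B$.

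The first step is the solution operator. By well-posedness of the exterior Dirichlet problem \eqref{eq:ueq}, which is the same existence and uniqueness result from \cite{HH93} used to define $\mathbf G^d_B$, the map $\mathbf f\mapsto\mathbf u$ is bounded from $[H^{1/2}(\partial B)]^2$ into $[H^1(\Omega_R\setminus\overline B)]^2$, where $\Omega_R$ is a large ball containing $\overline D$. This gives
\[
\|\mathbf u\|_{[H^1(\Omega_R\setminus\overline B)]^2}\le C\|\mathbf f\|_{[H^{1/2}(\partial B)]^2}.
\]

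The second step is interior regularity. Let $U'$ be a slightly larger neighbourhood with $\overline U\subset U'$ and $\overline{U'}\subset\Omega_R\setminus\overline B$. The Navier operator $\Delta^*+\omega^2$ has constant coefficients and is strongly elliptic because $\mu>0$ and $\lambda+\mu>0$. Interior elliptic estimates (Caccioppoli iteration, or analyticity of solutions) therefore give
\[
\|\mathbf u\|_{[H^2(U)]^2}\le C'\|\mathbf u\|_{[H^1(U')]^2}\le C''\|\mathbf f\|_{[H^{1/2}(\partial B)]^2}.
\]
So $\mathbf f\mapsto\mathbf u|_U$ is bounded into $[H^2(U)]^2$.

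The third step produces compactness. The domain $U\cap D$ is a Lipschitz annular region whose boundary contains $\partial D$. By Rellich, the embedding $[H^2(U\cap D)]^2\hookrightarrow[H^{1}(U\cap D)]^2$ is compact, and the trace $[H^1(U\cap D)]^2\to[H^{1/2}(\partial D)]^2$ is bounded on Lipschitz domains. Hence $\mathbf M_{B\to D}$ factors as a bounded map, followed by a compact embedding, followed by a bounded trace, and is therefore compact. One could instead use the bounded trace $H^2\to H^{3/2}(\partial D)$ and the compact embedding $H^{3/2}(\partial D)\hookrightarrow H^{1/2}(\partial D)$, but on a merely Lipschitz $\partial D$ the $H^{3/2}$ trace is delicate. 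Routing compactness through the domain embedding avoids that issue.

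The main obstacle is step one: the paper does not state the $H^1$ bound explicitly. Here I would cite the well-posedness (Fredholm theory plus uniqueness via Rellich's lemma for elastic waves), or argue by closed graph. The map $\mathbf f\mapsto\mathbf u$ is linear and everywhere defined, and its graph is closed by the weak formulation on $\Omega_R\setminus\overline B$ with the Dirichlet-to-Neumann map on $\partial\Omega_R$. This yields boundedness. The remaining steps are routine.
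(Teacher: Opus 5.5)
Your proof is correct, up to one small detail noted at the end. It rests on the same mechanism as the paper's proof. Because $\partial D$ lies at positive distance from $\partial B$, interior regularity for the strongly elliptic Navier system makes $\mathbf u$ an $H^2$ function on a two-sided neighbourhood of $\partial D$, and compactness comes from this gain in smoothness.

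The two routes differ in where the compact embedding sits. The paper traces the $H^2$ function into $[H^{3/2}(\partial D)]^2$ and writes $\mathbf M_{B\to D}=\mathbf E\,\mathbf M'_{B\to D}$, with $\mathbf E:[H^{3/2}(\partial D)]^2\hookrightarrow[H^{1/2}(\partial D)]^2$ compact. It never verifies that $\mathbf M'_{B\to D}$ is bounded, which the composition argument needs. Also, as you observe, the $H^{3/2}$ trace is not standard on a merely Lipschitz boundary.

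Your factorization runs: a bounded solution operator into $[H^1(\Omega_R\setminus\overline B)]^2$, the interior estimate into $[H^2(U)]^2$, the Rellich embedding $H^2\hookrightarrow H^1$, then the $H^1\to H^{1/2}(\partial D)$ trace. It uses only tools valid for Lipschitz domains. It also makes explicit the bound $\|\mathbf u\|_{[H^2(U)]^2}\le C\|\mathbf f\|_{[H^{1/2}(\partial B)]^2}$ that the paper leaves implicit, so it is the more careful version.

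The paper's boundary-space route can also be repaired by replacing $H^{3/2}(\partial D)$ with $H^1(\partial D)$. For an $H^2$ function, the tangential derivative of its trace is $(\nabla\mathbf u)\bm\tau\in L^2(\partial D)$, and $H^1(\partial D)\hookrightarrow H^{1/2}(\partial D)$ is compact.

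The detail to tighten concerns your claim that $U\cap D$ is a Lipschitz domain. Its inner boundary $\{\mathbf x\in D:\operatorname{dist}(\mathbf x,\partial D)=\delta/2\}$ is a level set of a distance function and need not be Lipschitz. Both Rellich's theorem and the trace theorem, as you invoke them, need some regularity of the whole boundary.

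A cutoff avoids this. Take $\chi\in C_c^\infty(U)$ with $\chi\equiv 1$ near $\partial D$. Then $\chi\mathbf u$, extended by zero, lies in $[H^2(D)]^2$ with norm at most $C\|\mathbf f\|_{[H^{1/2}(\partial B)]^2}$, and its trace on $\partial D$ is exactly $\mathbf M_{B\to D}\mathbf f$. You can then apply the compact embedding $[H^2(D)]^2\hookrightarrow[H^1(D)]^2$ and the trace $[H^1(D)]^2\to[H^{1/2}(\partial D)]^2$ on the bounded Lipschitz domain $D$ itself.
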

\begin{proof}
 From \eqref{eq:ueq}, it is clear that $\mathbf{u}$ satisfies the Navier equation in $\mathbb{R}^2 \setminus \overline{B}$. Since the Navier operator is strictly elliptic, for any open set $D'$ such that $\overline{D} \subset D'$, we have that $\mathbf{u}$ satisfies the Navier equation in $D' \setminus \overline{B}$. Given that $\overline{B} \subset D$, there exists an open neighborhood $\Omega'$ of $\partial D$ such that $\overline{\Omega'} \subset D' \setminus \overline{B}$. By the interior regularity of the Navier operator \cite{GT01}, it follows that $\mathbf{u} \in [H^2(\Omega')]^2$.

 Furthermore, by the trace theorem, it follows that $\mathbf u |_{\partial D} \in \left[H^{3/2}\left(\partial D \right) \right]^2$. Therefore, the boundary value mapping operator \(\mathbf M_{B \to D}\) can be rewritten as
 $$
 \mathbf M_{B \to D} = \mathbf E \mathbf M^{'}_{B \to D}.
 $$
Here, $\mathbf E$ is the compact embedding operator from $\left[H^{3/2}\left(\partial D \right) \right]^2$ to $\left[H^{1/2}\left(\partial D \right) \right]^2$, and $\mathbf M^{'}_{B \to D} := \left[H^{1/2}\left(\partial B \right) \right]^2 \to \left[H^{3/2}\left(\partial D \right) \right]^2$ is defined as
$$
\mathbf M^{'}_{B \to D} \mathbf f:= \mathbf u |_{\partial D}.
$$
Since $\mathbf{E}$ is compact, we conclude that $\mathbf{M}_{B 	\to D}$ is a compact operator from $[H^{1/2}(\partial B)]^2$ to $[H^{1/2}(\partial D)]^2$. 

The proof is complete.
\end{proof}

To prepare for the proof of Theorem~\ref{Characterofimpenetrable scatterer2},  in which the traction-free case is connected to the rigid case via the identity $\mathbf{G}_D^d = \mathbf{G}_D^n \bm{\mathcal{T}}$ (see~\eqref{eq:DNrelation}), we introduce the Dirichlet-to-Neumann (DtN) operator on $\partial D$.
\begin{defn} \label{de:DtN} 
		Let $B \subset \mathbb{R}^2$ be a simply connected and bounded Lipschitz domain such that $\overline{B} \subset D$, for any $\mathbf f\in \left[H^{1/2}\left(\partial D \right) \right]^2$, the DtN operator $ \bm{\mathcal{T}}: \left[H^{1/2}\left(\partial D \right) \right]^2 \to \left[H^{-1/2}\left(\partial D \right) \right]^2 $ for elastic waves is defined as follows
		$$ \bm{\mathcal{T}} \mathbf f =T_{\bm\nu}\mathbf u\vert_{\partial D},$$
		where $\mathbf u$ is the solution to the exterior Dirichlet boundary value problem \eqref{eq:ueq} with the impenetrable scatterer $B$ replaced by $D$.
\end{defn}
\begin{rem}
Since the solution to the exterior Dirichlet boundary value problem exists and is unique, the DtN operator $\bm{\mathcal{T}}$ is well-defined. Furthermore, according to \cite{BHSY18}, $\bm{\mathcal{T}}$ is a bounded operator.
\end{rem}

Finally, for two compact self-adjoint operators, we define a notation regarding the number of negative eigenvalues of their difference.
\begin{defn} 
	Let $X$ be a Hilbert space, and $A_1,A_2: X \to X $ be compact self-adjoint linear operators, if $A_2-A_1$ has at most $r$ negative eigenvalues (where $r\in \mathbb N$), we define this as
	$$
	A_1 \leq_r A_2.
	$$
\end{defn}

In particular, when the number of negative eigenvalues of $A_2-A_1$ is finite (that is, we can find $r \in \mathbb N$ such that $A_2-A_1$ satisfies $A_1 \leq_r A_2$), we denote this as $A_1 \leq_{fin} A_2$. Meanwhile, we recall from \cite{HPS19b} the equivalent condition under which the difference of these two operators has finitely many negative eigenvalues.

\begin{lem}\label{EquDef}
	Let $X$ be a Hilbert space with the inner product defined as $\left\langle  \cdot , \cdot \right\rangle _X$, and let $r \in \mathbb N$. Then the following two statements are equivalent:
	\begin{itemize}
		\item [(1).] $ A_1 \leq_r A_2 $.
		\item [(2).] There exists a finite-dimensional subspace $V \subseteq X$ and $dim \left( V\right) \leq r$, such that:
		$$
		\left\langle  \left( A_2- A_1\right) v, v \right\rangle _X \geq 0, \quad \forall v \in V^{\perp}.
		$$
	\end{itemize}
\end{lem}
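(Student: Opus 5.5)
The plan is to work with $A:=A_2-A_1$, which is compact and self-adjoint on $X$. By the spectral theorem, $A$ has real eigenvalues $\lambda_j$ that accumulate only at $0$, and its eigenvectors $e_j$ form an orthonormal system. Together with an orthonormal basis of $\ker A$, they span $X$. We read "at most $r$ negative eigenvalues" as counting multiplicity, so it means that $\dim$ of the negative spectral subspace is at most $r$. Both implications then reduce to comparing $\langle Av,v\rangle_X$ with this subspace.

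\textbf{(1) $\Rightarrow$ (2).} Set $V:=\operatorname{span}\{e_j:\lambda_j<0\}$. By assumption $\dim V\le r$.
\begin{itemize}
\item Take $v\in V^\perp$ and expand it as $v=v_0+\sum_{\lambda_j>0}\langle v,e_j\rangle_X e_j$, with $v_0\in\ker A$.
\item Then $\langle Av,v\rangle_X=\sum_{\lambda_j>0}\lambda_j|\langle v,e_j\rangle_X|^2\ge 0$, which is (2).
\end{itemize}

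\textbf{(2) $\Rightarrow$ (1).} We argue by contradiction, and this is where the dimension-counting idea of Lemma~\ref{Dim} enters.
\begin{itemize}
\item Suppose $A$ has more than $r$ negative eigenvalues counted with multiplicity. Then there is a subspace $W=\operatorname{span}\{e_1,\dots,e_{r+1}\}$ of eigenvectors with $\lambda_1,\dots,\lambda_{r+1}<0$.
\item For every $0\neq w\in W$ we get $\langle Aw,w\rangle_X=\sum_{j\le r+1}\lambda_j|\langle w,e_j\rangle_X|^2<0$.
\item Let $P$ be the orthogonal projection onto $V$ and consider $P|_W:W\to V$. Since $\dim W=r+1>\dim V$, this map has a nontrivial kernel.
\item So there is $0\neq w\in W\cap V^\perp$. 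Hypothesis (2) gives $\langle Aw,w\rangle_X\ge 0$, while the previous step gives $\langle Aw,w\rangle_X<0$, a contradiction.
\item Hence $A$ has at most $r$ negative eigenvalues, i.e. $A_1\le_r A_2$.
\end{itemize}

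\textbf{Main obstacle.} The only delicate point is bookkeeping: making sure that "number of negative eigenvalues" is understood with multiplicity, so that it equals the dimension of the negative spectral subspace. Compactness guarantees this subspace is a well-defined direct sum of finite-dimensional eigenspaces. Once that is fixed, both directions are immediate, and the rest is routine.
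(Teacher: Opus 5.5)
Your proof is correct. The paper does not prove this lemma itself but recalls it from \cite{HPS19b}, and your argument is the standard spectral proof: the negative eigenspace of $A_2-A_1$ serves as $V$ for (1)$\Rightarrow$(2), and for (2)$\Rightarrow$(1) an $(r+1)$-dimensional negative eigenspace $W$ must meet $V^{\perp}$ nontrivially, which is essentially a min--max argument.

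Your reading of ``at most $r$ negative eigenvalues'' as counted with multiplicity is the right one and is needed: counting only distinct eigenvalues, (1)$\Rightarrow$(2) fails, e.g.\ $A_2-A_1=-P$ with $P$ a rank-two orthogonal projection and $r=1$.
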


Next, we consider the case of the rigid impenetrable scatterer. We develop a criterion to recover the shape of the scatterer \( D \) by analyzing the eigenvalue distribution of the operator \( -\mathbf H^*_B\mathbf H_B-\Re\left(\mathbf F^d \right) \). This criterion is formulated rigorously in the following theorem.
\begin{thm}\label{Characterofimpenetrable scatterer}
	Let $B,D\subseteq \mathbb{R}^2$ be open and  Lipschitz bounded domains such that $\mathbb{R}^2\setminus D$ is connected, where $D$ is a rigid impenetrable scatterer,
	\begin{itemize}
		\item [(1).] If $\overline{B}\subseteq D$, then we have $\Re\left( \mathbf F^d\right) \leq_{fin} -\mathbf H^*_B\mathbf H_B$;
		\item [(2).] If $B\nsubseteq D$, then we have $\Re\left( \mathbf F^d\right) \nleq_{fin} -\mathbf H^*_B\mathbf H_B$.
	\end{itemize} 
\end{thm}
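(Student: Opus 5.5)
Both parts rest on one identity. For $\mathbf g\in[L^2(\mathbb S)]^2$ we write $\langle \cdot,\cdot\rangle$ for the $L^2(\mathbb S)$ inner product. Lemma~\ref{Fdecomposition}(1) gives
\[
\langle \Re(\mathbf F^d)\mathbf g,\mathbf g\rangle=-\sqrt{8\pi\omega}\,\Re\bigl(\mathbf S^*\mathbf G_D^{d*}\mathbf g,\mathbf G_D^{d*}\mathbf g\bigr),
\]
and we split $\Re\mathbf S=\mathbf S_i+\mathbf K_1$, where $\mathbf K_1:=\Re(\mathbf S-\mathbf S_i)$ is compact and self-adjoint by Lemma~\ref{SProperty}(4). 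Throughout we assume $\omega^2$ is not a Dirichlet eigenvalue in $D$ or in $B$.

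\textbf{Part (1).} Here $\overline B\subseteq D$. By \eqref{eq:HGSoperator} applied to $B$ together with \eqref{eq:GM relation},
\[
\mathbf H_B^*=\sqrt{8\pi\omega}\,\mathbf G_D^d\mathbf M_{B\to D}\mathbf S_B ,
\]
so $\|\mathbf H_B\mathbf g\|^2=\|\mathbf K_B\mathbf G_D^{d*}\mathbf g\|^2$ with
\[
\mathbf K_B:=\sqrt{8\pi\omega}\,\mathbf S_B^*\mathbf M_{B\to D}^*:[H^{-1/2}(\partial D)]^2\to[H^{1/2}(\partial B)]^2 .
\]
This operator is compact by Lemma~\ref{BoundaryVCompac}. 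Consequently
\[
\langle(-\mathbf H_B^*\mathbf H_B-\Re\mathbf F^d)\mathbf g,\mathbf g\rangle
=\sqrt{8\pi\omega}\Bigl[(\mathbf G_D^{d*}\mathbf g,\mathbf S_i\mathbf G_D^{d*}\mathbf g)+(\mathbf G_D^{d*}\mathbf g,\widetilde{\mathbf K}\,\mathbf G_D^{d*}\mathbf g)\Bigr],
\]
where $\widetilde{\mathbf K}=\mathbf K_1-(8\pi\omega)^{-1/2}\mathbf K_B^*\mathbf K_B$ is compact and self-adjoint from $H^{-1/2}$ to $H^{1/2}$. Coercivity of $\mathbf S_i$ (Lemma~\ref{SProperty}(3)) bounds the first term below by $c_0\|\mathbf G_D^{d*}\mathbf g\|^2_{H^{-1/2}}$. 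Lemma~\ref{DatatoPatternProperty}(1) with $c_1=c_0/2$ supplies a finite-dimensional $V_1$ on whose orthogonal complement the compact term is at most $\frac{c_0}{2}\|\mathbf G_D^{d*}\mathbf g\|^2$ in absolute value. The quadratic form is therefore nonnegative on $V_1^\perp$, and Lemma~\ref{EquDef} gives $\Re(\mathbf F^d)\leq_{fin}-\mathbf H_B^*\mathbf H_B$.

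\textbf{Part (2).} Suppose, for contradiction, that $\Re(\mathbf F^d)\leq_r-\mathbf H_B^*\mathbf H_B$. By Lemma~\ref{EquDef} there is a $V$ with $\dim V\le r$ and
\[
\|\mathbf H_B\mathbf g\|^2\le-\langle\Re(\mathbf F^d)\mathbf g,\mathbf g\rangle\quad\text{for all }\mathbf g\in V^\perp .
\]
Boundedness of $\mathbf S$ gives $|\langle\Re\mathbf F^d\mathbf g,\mathbf g\rangle|\le C\|\mathbf G_D^{d*}\mathbf g\|^2_{H^{-1/2}}$. Now apply Theorem~\ref{LocialWaveFounc} with $V_1=V$: it yields $\mathbf f_n\in V^\perp$ with $\|\mathbf H_B\mathbf f_n\|\to\infty$ and $\|\mathbf G_D^{d*}\mathbf f_n\|\to0$. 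Then $\|\mathbf H_B\mathbf f_n\|^2\le C\|\mathbf G_D^{d*}\mathbf f_n\|^2\to0$, which contradicts the divergence.

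\textbf{Main obstacle.} The delicate point is norm bookkeeping. $\|\mathbf H_B\mathbf g\|$ must mean the $L^2(\mathbb S)$-compatible quantity $\langle\mathbf H_B^*\mathbf H_B\mathbf g,\mathbf g\rangle$, whereas Theorem~\ref{LocialWaveFounc} controls the $H^{1/2}(\partial B)$ norm. Accordingly, the probing operator should be read with $\mathbf H_B$ mapping into $[L^2(\partial B)]^2$, or equivalently with the $H^{1/2}$ inner product. I would check the divergence in the weaker norm by using the $L^2(\partial B)$ version of Lemma~\ref{Inquallity} in the proof of Theorem~\ref{LocialWaveFounc}; the range argument there is unchanged. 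In part (1), the same choice must keep $\mathbf K_B$ compact into that space, which holds because the $L^2$ embedding only helps. A second check is needed in part (1): the sign conventions of $\mathbf S^*$ versus $\mathbf S$ in the real part have to be consistent, so that the principal part has the positive sign needed for coercivity.
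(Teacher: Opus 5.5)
Your proposal is correct and follows the paper's proof essentially step for step. In part (1) you factor $-\mathbf H_B^*\mathbf H_B-\Re(\mathbf F^d)$ through $\mathbf G_D^{d*}$ via $\mathbf G_B^d=\mathbf G_D^d\mathbf M_{B\to D}$ as a compact perturbation of the coercive $\mathbf S_i$ and conclude with Lemma~\ref{DatatoPatternProperty} and Lemma~\ref{EquDef}, and in part (2) you argue by contradiction using the localized wave functions of Theorem~\ref{LocialWaveFounc}. Your bookkeeping is tighter than the paper's on two points: the compact term $\mathbf K_B^*\mathbf K_B$ does enter with a minus sign, and the norm on $\partial B$ in $\langle \mathbf H_B^*\mathbf H_B\mathbf g,\mathbf g\rangle=\|\mathbf H_B\mathbf g\|^2$ must match the norm in which Theorem~\ref{LocialWaveFounc} gives divergence, which your $L^2(\tau)$ rerun of the range argument secures.
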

\begin{proof}
First, we now prove the first part of the theorem.

In  \eqref{eq:HGSoperator}, replacing \( D \) with \( B \), we derive the relation $\mathbf H^*_B=\sqrt{8\pi\omega}\mathbf G^d_B \mathbf S_B$. Furthermore, from \eqref{eq:GM relation}, it follows that 
\begin{equation}\label{eq:Decomposition H}
    \mathbf H_B= \sqrt{8\pi\omega}\mathbf S^*_B \left( \mathbf M_{B \to D} \right) ^* \mathbf G_D^{d*}.
\end{equation}
Consequently, based on the factorization of the far-field operator $\mathbf F^d$ in Lemma \ref{Fdecomposition}, we can obtain
$$
\Re\left( \mathbf F^d\right)+\mathbf H^*_B\mathbf H_B = -\sqrt{8\pi\omega}\mathbf G_D^d \left[ \frac{1}{2}\left( \mathbf S^*+\mathbf S\right) \right] \mathbf G_D^{d*}+ 8\pi\omega\mathbf G_D^d \left[\mathbf M_{B \to D}\mathbf S_B\mathbf S^*_B\left( \mathbf M_{B \to D}\right) ^*\right] \mathbf G_D^{d*},
$$
where
\begin{equation}\notag\begin{aligned}
\frac{1}{2}\left( \mathbf S^*+\mathbf S\right) 
= \mathbf S_{i} + \frac{1}{2}\left( \left( \mathbf S^*-\mathbf S_{i}\right) +\left( \mathbf S-\mathbf S_{i}\right) \right) .
\end{aligned}\end{equation}
From Lemma \ref{SProperty}, $\mathbf S-\mathbf S_i$ is a compact operator. Letting $\mathbf K:= \mathbf S-\mathbf S_{i}$, we thus have that $\mathbf K$ is compact. Additionally, since the adjoint of a compact operator remains compact, it follows that $\mathbf S^*-\mathbf S_{i}=\left(\mathbf S-\mathbf S_{i}\right) ^*=\mathbf K^*$ is also compact. Define $\mathbf K_1:=\mathbf K +\mathbf K^*$, then we obtain $\frac{1}{2}\left( \mathbf S^*+\mathbf S\right)=\mathbf S_{i} +\mathbf K_1$, which implies that $\frac{1}{2}\left( \mathbf S^*+\mathbf S\right)$ is a compact perturbation of the self-adjoint and coercive operator $\mathbf S_{i}$. Thus, we can derive the following equation
$$
\Re\left( \mathbf F^d\right)+\mathbf H^*_B\mathbf H_B = -\mathbf G_D^d \left[\sqrt{8\pi\omega}\left( \mathbf S_{i} +\mathbf K_1 \right)-8\pi\omega \mathbf M_{B \to D}\mathbf S_B\mathbf S^*_B\left( \mathbf M_{B \to D}\right) ^*\right]\mathbf G_D^{d*}.
$$
We denote $\mathbf K_2:=8\pi\omega \mathbf M_{B \to D}\mathbf S_B\mathbf S^*_B\left( \mathbf M_{B \to D}\right) ^*$. By the compactness of the boundary value mapping operator $\mathbf M_{B \to D}$ demonstrated in Lemma \ref{BoundaryVCompac}, it follows that $\mathbf K_2$ is a compact operator from $ \left[H^{-1/2}\left(\partial D \right) \right]^2 $ to $\left[H^{1/2}\left(\partial D \right) \right]^2$. Meanwhile, we define $\mathbf{\tilde{K}}=\sqrt{8\pi\omega} \mathbf K_1+\mathbf K_2$, from which we derive 
\begin{equation*}
\Re\left( \mathbf F^d\right)+\mathbf H^*_B\mathbf H_B = -\mathbf G_D^d \left( \sqrt{8\pi\omega} \mathbf S_{i}+\mathbf{\tilde{K}} \right) \mathbf G_D^{d*},
\end{equation*}
where $\mathbf{\tilde{K}}:\left[H^{-1/2}\left(\partial D \right) \right]^2  \to \left[H^{1/2}\left(\partial D \right) \right]^2$ is a compact self-adjoint operator. Consequently, for any $\mathbf f \in \left[L^{2}\left(\mathbb S \right) \right]^2$, we have
\begin{equation*}\begin{aligned}
		\left\langle \left( \Re\left( \mathbf F^d\right)+\mathbf H^*_B\mathbf H_B\right) \mathbf f, \mathbf f\right\rangle 
		&= \left\langle \left(-\mathbf G_D^d \left( \sqrt{8\pi\omega} \mathbf S_{i}+\mathbf{\tilde{K}} \right) \mathbf G_D^{d*} \right) \mathbf f, \mathbf f\right\rangle  \\
		&\leq -c_1 || \mathbf G_D^{d*} \mathbf f ||^2_{\left[H^{-1/2}\left(\partial D \right) \right]^2} - \left(  \mathbf G_D^{d*} \mathbf f, \mathbf{\tilde{K}}\mathbf G_D^{d*} \mathbf f \right) ,
\end{aligned}\end{equation*}
	where the last inequality is derived from the coercivity of $\mathbf S_{i}$ from Lemma \ref{SProperty}. For the second term on the right-hand side of the above equation, Lemma \ref{DatatoPatternProperty} implies that there exists a finite-dimensional subspace \( V_1 \subseteq \left[L^{2}\left(\mathbb S \right) \right]^2 \) such that
	$$
	 - \left(  \mathbf G_D^{d*} \mathbf f, \mathbf{\tilde{K}}\mathbf G_D^{d*} \mathbf f \right) \leq c_1 ||\mathbf G_D^{d*} \mathbf f||_{\left[H^{-1/2}\left(\partial D\right) \right] ^2} ^2, \qquad \forall \, \mathbf f \in V_1^\perp,
	$$
	thus we can obtain 
	$$
	\left\langle \left( \Re\left( \mathbf F^d\right)+\mathbf H^*_B\mathbf H_B\right) \mathbf f, \mathbf f\right\rangle  \leq -c_1 || \mathbf G_D^{d*} \mathbf f ||^2_{\left[H^{-1/2}\left(\partial D \right) \right]^2} + c_1 ||\mathbf G_D^{d*} \mathbf f||_{\left[H^{-1/2}\left(\partial D\right) \right] ^2} ^2=0,\qquad \forall \,\mathbf f \in V_1^\perp.
	$$
	Therefore, according to Lemma \ref{EquDef}, it can be concluded that the first part of this theorem has been proved.
	
	To proceed with the proof of the second part of this theorem, we adopt a proof by contradiction.
	
	If $B\nsubseteq D$, suppose there exists a finite-dimensional subspace $\tilde{V}_1\in \left[L^{2}\left(\mathbb S \right) \right]^2$ such that
	$$
		\left\langle \left( \Re\left( \mathbf F^d\right)+\mathbf H^*_B\mathbf H_B\right) \mathbf f, \mathbf f\right\rangle \leq 0, \qquad \forall \,\mathbf f \in \tilde{V}_1^\perp.
	$$
	From the definitions of the inner product and norm in Hilbert space, we can deduce that
	\begin{equation}\label{eq:InequalityA}
		\left\langle \mathbf H^*_B\mathbf H_B \mathbf f ,\mathbf f\right\rangle =||\mathbf H_B \mathbf f||^2_{\left[H^{1/2}\left(\partial B \right) \right]^2}.
	\end{equation}
Meanwhile, since $\Re\left( \mathbf F^d\right) = -\sqrt{8\pi\omega}\mathbf G_D^d \left[ \frac{1}{2}\left( \mathbf S^* + \mathbf S\right) \right] \mathbf G_D^{d*}$, we can infer that
\begin{equation}\begin{aligned}\label{eq:InequalityB}
-\left\langle \Re\left( \mathbf F^d\right) \mathbf f, \mathbf f\right\rangle
&\leq |\left\langle \Re\left( \mathbf F^d\right) \mathbf f, \mathbf f\right\rangle| \\
&= \left| \left( -\frac{\sqrt{8\pi\omega}}{2} \left( \mathbf S^*+\mathbf S\right) \mathbf G_D^{d*} \mathbf f, \mathbf G_D^{d*} \mathbf f \right) \right| \\
&\leq c_1^{'}||\mathbf G_D^{d*} \mathbf f||_{\left[H^{-1/2}\left(\partial D \right) \right]^2}.
\end{aligned}\end{equation}
It is straightforward to observe that the above $c_1^{'} > 0 $. Combining \eqref{eq:InequalityA} and \eqref{eq:InequalityB}, we can deduce that
	$$
	\left\langle \left( \Re\left( \mathbf F^d\right)+\mathbf H^*_B\mathbf H_B\right) \mathbf f, \mathbf f\right\rangle \geq - c_1^{'}||\mathbf G_D^{d*} \mathbf f||^2_{\left[H^{-1/2}\left(\partial D \right) \right]^2} + ||\mathbf H_B \mathbf g||^2_{\left[H^{1/2}\left(\partial B \right) \right]^2},
	$$
	For the right-hand side of the above inequality, by Theorem \ref{LocialWaveFounc}, there exists a sequence $\left\lbrace \mathbf f_n \right\rbrace \subseteq V_1^\perp$ such that $||\mathbf G_D^{d*} \mathbf f_n||_{\left[H^{-1/2}\left(\partial D \right) \right]^2} \to 0$ and $||\mathbf H_B \mathbf f_n||^2_{\left[H^{1/2}\left(\partial B \right) \right]^2} \to \infty$ as $n\to \infty$. It follows that there exists $\mathbf f \in \tilde{V_1}^\perp $ such that $ \left\langle \left( \Re\left( \mathbf F^d\right)+\mathbf H^*_B\mathbf H_B\right) \mathbf f, \mathbf f\right\rangle > 0$, which contradicts our assumption. This completes the proof of the second part of the theorem.
\end{proof}

Next, we turn our attention to the traction-free impenetrable scatterer. We establish the corresponding shape characterization criterion, which relies on the eigenvalue properties of the operator \( \Re\left(\mathbf F^n \right)-\mathbf H^*_B\mathbf H_B \). The associated theorem is presented as follows.

\begin{thm}\label{Characterofimpenetrable scatterer2}
	Let $B,D\subseteq \mathbb R^{2}$ be open and bounded domains with Lipschitz boundary such that $\mathbb R^2\backslash D$ is connected, where $D$ is a traction-free impenetrable scatterer,
	\begin{itemize}
		\item [(1).] If $\overline{B}\subseteq D$, then we have $\mathbf H^*_B\mathbf H_B \leq_{fin} \Re\left( \mathbf F^n\right) $;
		\item [(2).] If $B\nsubseteq D$, then we have $\mathbf H^*_B\mathbf H_B \nleq_{fin} \Re\left( \mathbf F^n\right)$.
	\end{itemize} 
\end{thm}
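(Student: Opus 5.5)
The plan mirrors the proof of Theorem~\ref{Characterofimpenetrable scatterer}. The sign flips because $-\mathbf N_i$, rather than $\mathbf N_i$, is coercive. The key new ingredient is the identity $\mathbf G_D^d=\mathbf G_D^n\bm{\mathcal T}$ announced before Definition~\ref{de:DtN}. It holds because the radiating solution with Dirichlet data $\mathbf f$ on $\partial D$ has traction $\bm{\mathcal T}\mathbf f$, so both sides give the same far field. Combining it with \eqref{eq:Decomposition H} gives
\begin{equation*}
\mathbf H_B=\sqrt{8\pi\omega}\,\mathbf S_B^*\mathbf M_{B\to D}^*\bm{\mathcal T}^*\mathbf G_D^{n*}.
\end{equation*}
Hence
\begin{equation*}
\Re(\mathbf F^n)-\mathbf H_B^*\mathbf H_B=-\mathbf G_D^n\Big[\sqrt{8\pi\omega}\tfrac12(\mathbf N+\mathbf N^*)+\mathbf K_3\Big]\mathbf G_D^{n*},
\end{equation*}
where $\mathbf K_3:=8\pi\omega\,\bm{\mathcal T}\mathbf M_{B\to D}\mathbf S_B\mathbf S_B^*\mathbf M_{B\to D}^*\bm{\mathcal T}^*$ maps $[H^{1/2}(\partial D)]^2\to[H^{-1/2}(\partial D)]^2$. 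It is self-adjoint, and it is compact because $\mathbf M_{B\to D}$ is compact (Lemma~\ref{BoundaryVCompac}) and $\bm{\mathcal T}$ is bounded.

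For part (1), I use Lemma~\ref{NProperty}(4) to write $\tfrac12(\mathbf N+\mathbf N^*)=\mathbf N_i+\mathbf K_4$ with $\mathbf K_4$ compact and self-adjoint. Then
\begin{equation*}
\langle(\Re\mathbf F^n-\mathbf H_B^*\mathbf H_B)\mathbf f,\mathbf f\rangle=-\sqrt{8\pi\omega}(\mathbf N_i\mathbf G_D^{n*}\mathbf f,\mathbf G_D^{n*}\mathbf f)-(\tilde{\mathbf K}\mathbf G_D^{n*}\mathbf f,\mathbf G_D^{n*}\mathbf f),
\end{equation*}
with $\tilde{\mathbf K}:=\sqrt{8\pi\omega}\mathbf K_4+\mathbf K_3$. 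The coercivity of $-\mathbf N_i$ bounds the first term from below by $c\|\mathbf G_D^{n*}\mathbf f\|^2_{H^{1/2}}$. Lemma~\ref{DatatoPatternProperty}(2), applied with $c_2=c$, gives a finite-dimensional $V_2$ on whose orthogonal complement the compact term is at most $c\|\mathbf G_D^{n*}\mathbf f\|^2$ in absolute value. So the form is nonnegative on $V_2^\perp$, and Lemma~\ref{EquDef} yields $\mathbf H_B^*\mathbf H_B\le_{fin}\Re(\mathbf F^n)$.

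For part (2), I argue by contradiction. Suppose the form $\langle(\Re\mathbf F^n-\mathbf H_B^*\mathbf H_B)\mathbf f,\mathbf f\rangle$ is $\ge0$ on $\tilde V_2^\perp$ for some finite-dimensional $\tilde V_2$. By the factorization in Lemma~\ref{Fdecomposition}(2) and boundedness of $\mathbf N$,
\begin{equation*}
\langle\Re(\mathbf F^n)\mathbf f,\mathbf f\rangle\le c'\|\mathbf G_D^{n*}\mathbf f\|^2_{[H^{1/2}(\partial D)]^2},\qquad \langle\mathbf H_B^*\mathbf H_B\mathbf f,\mathbf f\rangle=\|\mathbf H_B\mathbf f\|^2 .
\end{equation*}
Theorem~\ref{LocialWaveFounc2} with $V_2=\tilde V_2$ supplies $\mathbf h_n\in\tilde V_2^\perp$ with $\|\mathbf G_D^{n*}\mathbf h_n\|\to0$ and $\|\mathbf H_B\mathbf h_n\|\to\infty$. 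The form is then negative for large $n$, a contradiction.

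The main obstacle is justifying $\mathbf G_D^d=\mathbf G_D^n\bm{\mathcal T}$ and the resulting adjoint formula for $\mathbf H_B$, including the mapping properties in the correct dualities. This step also needs well-posedness, so I assume $\omega^2$ is neither a Dirichlet nor a Neumann eigenvalue of $-\Delta^*$ in $D$ and in $B$, as in Lemmas~\ref{SProperty} and~\ref{NProperty}. A second point to check is that $\mathbf K_3$ is self-adjoint as an operator $H^{1/2}\to H^{-1/2}$, so that Lemma~\ref{DatatoPatternProperty}(2) applies. This holds because $\mathbf K_3$ has the form $\mathbf A\mathbf A^*$. If it did not, I would apply the lemma to its self-adjoint part, which suffices since only real quadratic forms appear.
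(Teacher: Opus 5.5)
Your proposal is correct and follows the paper's proof essentially step for step. It uses the identity $\mathbf G_D^d=\mathbf G_D^n\bm{\mathcal T}$ with \eqref{eq:Decomposition H}, the splitting $\tfrac12(\mathbf N+\mathbf N^*)=\mathbf N_i+\mathbf K_4$ with $\mathbf K_4$ compact, coercivity of $-\mathbf N_i$ with Lemma~\ref{DatatoPatternProperty}(2) and Lemma~\ref{EquDef} for part (1), and a contradiction via Theorem~\ref{LocialWaveFounc2} for part (2); your signs are the consistent ones ($\mathbf K_3$ enters the bracket with a plus sign, the part (2) bound is $c'\|\mathbf G_D^{n*}\mathbf h\|^2-\|\mathbf H_B\mathbf h\|^2$ so the form becomes negative, and the sequence lies in $\tilde V_2^\perp$), whereas the paper's written version has slips at exactly these points.
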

\begin{proof}
        We first prove the first part of this theorem. From Definition \ref{de:DtN}, it follows that
		\begin{equation}\label{eq:DNrelation}
		    \mathbf G_D^d=\mathbf G_D^n \bm{\mathcal{T}}.
		\end{equation}
	   Therefore, based on the factorization of the far-field operator $\mathbf F^n$ in Lemma \ref{Fdecomposition}  and \eqref{eq:Decomposition H}, we can obtain
		\begin{equation*}\begin{aligned}
		\Re\left( \mathbf F^n\right)-\mathbf H^*_B\mathbf H_B 
		&= -\sqrt{8\pi\omega}\mathbf G_D^n \left[ \frac{1}{2}\left( \mathbf N^*+\mathbf N\right) \right] \mathbf G_D^{n*}- 8\pi\omega\mathbf G_D^d \left[\mathbf M_{B \to D}\mathbf S_B\mathbf S^*_B\left( \mathbf M_{B \to D}\right) ^*\right] \mathbf G_D^{d*}\\
		&=-\mathbf G_D^n \left[ \frac{\sqrt{8\pi\omega}}{2}\left( \mathbf N^*+\mathbf N\right) - 8\pi\omega \bm{\mathcal{T}} \mathbf M_{B \to D}\mathbf S_B\mathbf S^*_B\left( \mathbf M_{B \to D}\right) ^* \bm{\mathcal{T}}^* \right] \mathbf G_D^{n*},
		\end{aligned}\end{equation*}
		where $\bm{\mathcal{T}}^*$ denotes the adjoint operator of $\bm{\mathcal{T}}$.
	
It follows from Lemma \ref{NProperty} that $\mathbf N-\mathbf N_{i}$ is a compact operator. Accordingly, we have $-\frac{1}{2}\left( \mathbf N^*+\mathbf N\right)=-\mathbf N_{i} +\mathbf Q_1$, where $\mathbf Q_1$ is a compact operator. Therefore,
	$$
	\Re\left( \mathbf F^n\right)-\mathbf H^*_B\mathbf H_B =\mathbf G_D^n \left[\sqrt{8\pi\omega} \left( -\mathbf N_i+\mathbf Q_1\right) + 8\pi\omega \bm{\mathcal{T}} \mathbf M_{B \to D}\mathbf S_B\mathbf S^*_B\left( \mathbf M_{B \to D}\right) ^* \bm{\mathcal{T}}^* \right] \mathbf G_D^{n*},
	$$
	We denote $\mathbf Q_2:=8\pi\omega \mathbf M_{B \to D}\mathbf S_B\mathbf S^*_B\left( \mathbf M_{B \to D}\right) ^*$, according to the compactness of the boundary value mapping operator $\mathbf M_{B \to D}$ and the boundedness of the DtN operator $\bm{\mathcal{T}}$, it follows that $\mathbf Q_2$ is a compact operator from $ \left[H^{1/2}\left(\partial D \right) \right]^2 $ to $\left[H^{-1/2}\left(\partial D \right) \right]^2$. Meanwhile, we write $\mathbf{\tilde{Q}}=\sqrt{8\pi\omega} \mathbf Q_1+\mathbf Q_2$, from which we can obtain $\Re\left( \mathbf F^n\right)-\mathbf H^*_B\mathbf H_B = \mathbf G_D^n \left( -\sqrt{8\pi\omega} \mathbf N_{i}+\mathbf{\tilde{Q}} \right) \mathbf G_D^{n*}$, where $\mathbf{\tilde{Q}}:\left[H^{1/2}\left(\partial D \right) \right]^2  \to \left[H^{-1/2}\left(\partial D \right) \right]^2$ is a self-adjoint compact operator, therefore for any $\mathbf h \in \left[L^{2}\left(\mathbb S \right) \right]^2$, we have
		\begin{equation*}\begin{aligned}
				\left\langle \left( \Re\left( \mathbf F^n\right)-\mathbf H^*_B\mathbf H_B\right) \mathbf h, \mathbf h\right\rangle 
				&= \left\langle \left(\mathbf G_D^n \left( -\sqrt{8\pi\omega} \mathbf N_{i}+\mathbf{\tilde{Q}} \right) \mathbf G_D^{n*} \right) \mathbf h, \mathbf h\right\rangle  \\
				&\geq c_2 || \mathbf G_D^{n*} \mathbf h ||^2_{\left[H^{1/2}\left(\partial D \right) \right]^2} + \left(  \mathbf G_D^{n*} \mathbf h, \mathbf{\tilde{Q}}\mathbf G_D^{n*} \mathbf h \right) ,
		\end{aligned}\end{equation*}
		where the last inequality is derived from the coerciveness of $-\mathbf N_{i}$ from Lemma \ref{NProperty}. For the second term on the right-hand side of the above equation, according to Lemma \ref{DatatoPatternProperty}, we know that there exists a finite-dimensional subspace $V_2 \subseteq \left[L^{2}\left(\mathbb S \right) \right]^2$ such that
		$$
		 -\left(  \mathbf G_D^{n*} \mathbf h, \mathbf{\tilde{Q}}\mathbf G_D^{n*} \mathbf h \right) \leq c_2 ||\mathbf G_D^{n*} \mathbf h||_{\left[H^{1/2}\left(\partial D\right) \right] ^2} ^2, \qquad \forall \,\mathbf h \in V_2^\perp,
		$$
		Thus we can obtain 
		$$
		\left\langle \left( \Re\left( \mathbf F^n\right)-\mathbf H^*_B\mathbf H_B\right) \mathbf h, \mathbf h\right\rangle  \geq c_2 || \mathbf G_D^{n*} \mathbf h ||^2_{\left[H^{1/2}\left(\partial D \right) \right]^2} - c_2 ||\mathbf G_D^{n*} \mathbf h||_{\left[H^{1/2}\left(\partial D\right) \right] ^2} ^2=0,\qquad \forall \,\mathbf h\in V_2^\perp.
		$$
		Therefore, according to Lemma \ref{EquDef}, it can be concluded that the first part of this theorem has been proved.

        Similarly, for the proof of the second part of this theorem, we also employ a proof by contradiction.
		
		If $B\nsubseteq D$, suppose there exists a finite-dimensional subspace $\tilde{V}_2\in \left[L^{2}\left(\mathbb S \right) \right]^2$ such that
		$$
		\left\langle \left( \Re\left( \mathbf F^n\right)-\mathbf H^*_B\mathbf H_B\right) \mathbf h, \mathbf h\right\rangle \geq 0, \qquad \forall \,\mathbf h \in \tilde{V}_2^\perp.
		$$
		Since $\Re\left( \mathbf F^n\right) = -\sqrt{8\pi\omega}\mathbf G_D^n \left[ \frac{1}{2}\left( \mathbf N^* + \mathbf N\right) \right] \mathbf G_D^{n*}$, we can infer that
		\begin{equation}\begin{aligned}\label{eq:InequalityB2}
				\left\langle \Re\left( \mathbf F^n\right) \mathbf h, \mathbf h\right\rangle
				&\leq |\left\langle \Re\left( \mathbf F^n\right) \mathbf h, \mathbf h\right\rangle| \\
				&= \left| \left( -\frac{\sqrt{8\pi\omega}}{2} \left( \mathbf N^*+\mathbf N\right) \mathbf G_D^{n*} \mathbf h, \mathbf G_D^{n*} \mathbf h \right) \right| \\
				&\leq \frac{\sqrt{8\pi\omega}}{2} ||\mathbf N^*+\mathbf N ||_{\left[H^{1/2}\left(\partial D\right) \right] ^2 \to \left[H^{-1/2}\left(\partial D\right) \right] ^2} ||\mathbf G_D^{n*} \mathbf h||^2_{\left[H^{1/2}\left(\partial D \right) \right]^2}\\
				&\leq c_2^{'}||\mathbf G_D^{n*} \mathbf h||_{\left[H^{1/2}\left(\partial D \right) \right]^2}.
		\end{aligned}\end{equation}
		It is straightforward to observe that the above $c_2^{'} > 0 $. Combining \eqref{eq:InequalityA} and \eqref{eq:InequalityB2}, we can deduce that
		$$
		\left\langle \left( \Re\left( \mathbf F^n\right)-\mathbf H^*_B\mathbf H_B\right) \mathbf h, \mathbf h\right\rangle \leq c_2^{'}||\mathbf G_D^{n*} \mathbf h||^2_{\left[H^{1/2}\left(\partial D \right) \right]^2} + ||\mathbf H_B \mathbf h||^2_{\left[H^{1/2}\left(\partial B \right) \right]^2},
		$$
		For the right-hand side of the above inequality, according to Theorem \ref{LocialWaveFounc2}, we know that there exists a sequence $\left\lbrace \mathbf h_n \right\rbrace \subseteq V_2^\perp$ such that $||\mathbf G_D^{n*} \mathbf h_n||_{\left[H^{-1/2}\left(\partial D \right) \right]^2} \to 0$ and $||\mathbf H_B \mathbf h_n||^2_{\left[H^{1/2}\left(\partial B \right) \right]^2} \to \infty$ as $n\to \infty$, hence we can obtain that there exists $\mathbf h \in \tilde{V}_2^\perp $ such that $ \left\langle \left( \Re\left( \mathbf F^n\right)-\mathbf H^*_B\mathbf H_B\right) \mathbf h, \mathbf h\right\rangle > 0$, this contradicts our assumption, thus completing the proof of the second part of the theorem.
\end{proof}

\begin{rem}\label{rem:41}
Based on the shape characterization established in Theorems~\ref{Characterofimpenetrable scatterer} and~\ref{Characterofimpenetrable scatterer2}, the inclusion relationship between a probing domain $B$ and the unknown scatterer $D$ is encoded in the spectral properties of a specific linear combination of the far-field operator \(\mathbf F\) and \emph{probing operator} \(\mathbf{H}_B^* \mathbf{H}_B\). To provide a numerical framework, we define the \emph{monotonicity operator} $\mathcal{A}_{B}$ as:
\begin{equation}\label{operator:A_B}
\mathcal{A}_{B} :=
\begin{cases}
-\Re(\mathbf{F}^d) - \mathbf{H}_B^* \mathbf{H}_B, & 	\text{if $D$ is rigid}, \\
\Re(\mathbf{F}^n) - \mathbf{H}_B^* \mathbf{H}_B, & 	\text{if $D$ is traction-free}.
\end{cases}
\end{equation}
The theoretical criteria for shape reconstruction can be summarized as follows: $\overline B \subseteq D$ if and only if $\mathcal{A}_{B}$ possesses at most finitely many negative eigenvalues. Conversely, if $B \not\subseteq D$, the compact self-adjoint operator $\mathcal{A}_{B}$ admits infinitely many negative eigenvalues.

Building upon the shape characterizations established in Theorems~\ref{Characterofimpenetrable scatterer} and~\ref{Characterofimpenetrable scatterer2}, a natural counting-based indicator for recovering the boundary of $D$ is obtained by quantifying the number of negative eigenvalues of the monotonicity operator $\mathcal{A}_{B}$. Specifically, we define the \emph{counting-based indicator} as:
\begin{equation}\label{eq:indicatorContinuous}
    \mathcal{I}_{\mathrm{count}}(B) := \# \left\{ \lambda_j(\mathcal{A}_{B}) : \lambda_j(\mathcal{A}_{B}) < 0 \right\},
\end{equation}
where $\{\lambda_j(\mathcal{A}_{B})\}$ denotes the sequence of eigenvalues of the compact, self-adjoint operator $\mathcal{A}_{B}$. Let $\mathcal{N} \subset \mathbb{R}^2$ be a bounded search region (search domain) containing the unknown scatterer, i.e., $D \Subset \mathcal{N}$. For any probing domain $B \Subset \mathcal{N}$, it follows directly from Theorems~\ref{Characterofimpenetrable scatterer} and~\ref{Characterofimpenetrable scatterer2} that $\mathcal{I}_{\mathrm{count}}(B) < \infty$ if $\overline{B} \subseteq D$, whereas $\mathcal{I}_{\mathrm{count}}(B) = \infty$ if $B \not\subseteq D$ (or equivalently, $B \cap (\mathcal{N} \setminus D) \neq \emptyset$). 

Consequently, by sweeping the probing domain $B$ across the search region $\mathcal{N}$, the boundary $\partial D$ can be reconstructed. In the following subsection, we detail the discretized counterpart of $\mathcal{I}_{\mathrm{count}}(B)$. At the numerical level, due to discretization, truncation, and rounding effects, the indicator $\mathcal{I}_{\mathrm{count}}(B)$ does not truly diverge to infinity when $B$ intersects the exterior of $D$; nevertheless, it becomes larger compared to the relatively small values obtained when the containment $\overline{B} \subseteq D$ holds.

\end{rem}

\subsection{Numerical approximations of monotonicity operators}\label{sub:Numerical approximation}
In this subsection, we focus on the numerical implementation of Theorems \ref{Characterofimpenetrable scatterer} and \ref{Characterofimpenetrable scatterer2}. Specifically, we present the numerical approximation of the far-field operator $\mathbf F$ and the probing operator $\mathbf H_{B}^*\mathbf H_{B}$ involved in these results. According to Remark \ref{rem:41}, we shall design a counting-based indicator $I_{count}$ and further develop the \emph{counting-based monotonicity sampling method}.

Now, let us consider the numerical approximations of these two operators. We adopt the incident wave $\mathbf u^i$ given by \eqref{eq:ui}. Recall that $ \mathbf d $ denotes the incident direction of $\mathbf{u}^i$, while $ \hat{\mathbf x} $ denotes the observation direction in the far-field measurement. In practice, only limited data are available; therefore, we assume that there are $ N $ incident directions and $ N $ observation directions, namely,
\begin{align}
\label{eq:incidentd}
\mathbf d_n &= \left( \cos \alpha_n, \sin \alpha_n \right), \quad \alpha_n = \frac{(n-1)2\pi}{N}, \quad 1 \leq n \leq N,\\
\label{eq:observationx}
\hat{\mathbf x}_m &= \left( \cos \beta_m, \sin \beta_m \right), \quad \beta_m = \frac{(m-1)2\pi}{N}, \quad 1 \leq m \leq N.
\end{align}

First, we consider the numerical approximation of the far-field operator $\mathbf{F}$. From the definition of the far-field operator $\mathbf{F}$ in \eqref{eq:FarfieldOperator}, and using the trapezoidal rule, the integral can be discretized to obtain
\begin{align*}
(\mathbf{F} \mathbf g)_p(\hat{\mathbf x}) &\approx e^{-\frac{i\pi}{4}} \frac{2\pi}{N} \sum_{n=1}^{N} \left\{ \sqrt{\frac{k_p}{\omega}} u_p^\infty(\hat{\mathbf x}, \mathbf d_n, 1, 0) g_p(\mathbf d_n) + \sqrt{\frac{k_s}{\omega}} u_p^\infty(\hat{\mathbf x}, \mathbf d_n, 0, 1) g_s(\mathbf d_n) \right\},\\
(\mathbf{F} \mathbf g)_s(\hat{\mathbf x}) &\approx e^{-\frac{i\pi}{4}} \frac{2\pi}{N} \sum_{n=1}^{N} \left\{ \sqrt{\frac{k_p}{\omega}} u_s^\infty(\hat{\mathbf x}, \mathbf d_n, 1, 0) g_p(\mathbf d_n) + \sqrt{\frac{k_s}{\omega}} u_s^\infty(\hat{\mathbf x}, \mathbf d_n, 0, 1) g_s(\mathbf d_n) \right\},
\end{align*}
where $u_p^\infty(\hat{\mathbf{x}}, \mathbf{d}_n, 1, 0)$, $u_p^\infty(\hat{\mathbf{x}}, \mathbf{d}_n, 0, 1)$, $u_s^\infty(\hat{\mathbf{x}}, \mathbf{d}_n, 1, 0)$, and $u_s^\infty(\hat{\mathbf{x}}, \mathbf{d}_n, 0, 1)$ denote the compressional and shear parts of the far-field patterns $\mathbf{u}^\infty(\hat{\mathbf{x}}, \mathbf{d}_n, 1, 0)$ and $\mathbf{u}^\infty(\hat{\mathbf{x}}, \mathbf{d}_n, 0, 1)$ given by \eqref{eq:far-field p s}, respectively.
We take the values of the far-field operator at the \( N \) observation directions \( \hat{\mathbf x}_m \) as the approximation of the far-field operator. Accordingly, we employ the following \( 2 \times 2 \) block matrix \( \mathbb{F} \) as an approximation of the far-field operator
\begin{equation*}
\mathbb{F} = 
\begin{pmatrix}
	(\mathbb{F})_{pp} & (\mathbb{F})_{ps} \\
	(\mathbb{F})_{sp} & (\mathbb{F})_{ss}
\end{pmatrix}.
\end{equation*}
Here, \( (\mathbb{F})_{pp} \), \( (\mathbb{F})_{ps} \), \( (\mathbb{F})_{sp} \), and \( (\mathbb{F})_{ss} \) are all \( N \times N \) matrices. The entry at position \( (m,n) \) of each corresponding matrix is defined as follows:
\begin{align*}
    (\mathbb{F})_{pp}(m,n) &:=  e^{-\frac{i\pi}{4}} \frac{2\pi}{N} \sqrt{\frac{k_p}{\omega}} u_p^\infty(\hat{\mathbf x}_m, \mathbf d_n, 1, 0),\\ 
    (\mathbb{F})_{ps}(m,n) &:=  e^{-\frac{i\pi}{4}} \frac{2\pi}{N} \sqrt{\frac{k_s}{\omega}} u_p^\infty(\hat{\mathbf x}_m, \mathbf d_n, 0, 1),\\ 
    (\mathbb{F})_{sp}(m,n) &:=  e^{-\frac{i\pi}{4}} \frac{2\pi}{N} \sqrt{\frac{k_p}{\omega}} u_s^\infty(\hat{\mathbf x}_m, \mathbf d_n, 1, 0),\\ 
    (\mathbb{F})_{ss}(m,n) &:=  e^{-\frac{i\pi}{4}} \frac{2\pi}{N} \sqrt{\frac{k_s}{\omega}} u_s^\infty(\hat{\mathbf x}_m, \mathbf d_n, 0, 1). 
\end{align*}

Next, we first select an appropriate probing domain and focus on the approximation of the probing operator \( \mathbf H_B^* \mathbf H_B \). To begin with, we assume \( D \) is contained within the square \( [-R, R]^2 \). We then perform a grid partition on this domain of interest and denote its nodes as
\begin{equation*}
\mathcal{N}_h := \left\{ \mathbf z_{ij} = (ih, jh) \mid -M \leq i,j \leq M \right\} \subseteq [-R, R]^2,
\end{equation*}
where the grid size is given by \( h = {R}/{M} \). Based on this uniform partition of the domain \( [-R, R]^2 \), for each node \( \mathbf z_{ij} \in \mathcal{N}_h \), we take the probing domain \(B_{ij}\) at this position to be a disk centered at \(\mathbf z_{ij} \) with radius \( h \), namely,
\begin{equation*}
B_{ij} = B(\mathbf{z}_{ij}, h) = \left\{ \mathbf{x} \in \mathbb{R}^2 \mid |\mathbf{x} - \mathbf{z}_{ij}| < h \right\}.
\end{equation*}
Consequently, similar to the far-field operator $\mathbf{F}$, the operator \( \mathbf{H}_{B_{ij}}^* \mathbf{H}_{B_{ij}} \) can be approximated by the following \( 2 \times 2 \) block matrix
\begin{equation*}
\mathbb{H}_{ B_{ij}}^* \mathbb{H}_{ B_{ij}} = 
\begin{pmatrix}
	(\mathbb{H}_{ B_{ij}}^* \mathbb{H}_{ B_{ij}})_{pp} & (\mathbb{H}_{ B_{ij}}^* \mathbb{H}_{ B_{ij}})_{ps} \\
	(\mathbb{H}_{ B_{ij}}^* \mathbb{H}_{ B_{ij}})_{sp} & (\mathbb{H}_{ B_{ij}}^* \mathbb{H}_{ B_{ij}})_{ss}
\end{pmatrix}.
\end{equation*}
Here, \( (\mathbb{H}_{ B_{ij}}^* \mathbb{H}_{ B_{ij}})_{pp} \), \( (\mathbb{H}_{ B_{ij}}^* \mathbb{H}_{ B_{ij}})_{ps} \), \( (\mathbb{H}_{ B_{ij}}^* \mathbb{H}_{ B_{ij}})_{sp} \), and \( (\mathbb{H}_{ B_{ij}}^* \mathbb{H}_{ B_{ij}})_{ss} \) are all \( N \times N \) matrices. The entry at position \( (l,n) \) of each corresponding matrix is defined as follows
\begin{align*}
    \left(\mathbb{H}_{ B_{ij}}^* \mathbb{H}_{ B_{ij}} \right)_{pp}(l,n) &=\frac{4\pi^2 h}{N} e^{i k_p \mathbf z_{ij} \cdot (\bm\theta_n - \mathbf d_l)} J_0\left( k_p h |\bm\theta_n - \mathbf d_l| \right) \mathbf d_l \cdot \bm\theta_n;\\
    \left(\mathbb{H}_{ B_{ij}}^* \mathbb{H}_{ B_{ij}} \right)_{ps}(l,n) &=\frac{4\pi^2 h}{N} \sqrt{\frac{k_s}{k_p}} e^{i \mathbf z_{ij} \cdot (k_s \bm\theta_n - k_p \mathbf d_l)} J_0\left( h |k_s \bm\theta_n - k_p \mathbf d_l| \right) \mathbf d_l \cdot \bm\theta_n^\perp;\\
    \left(\mathbb{H}_{ B_{ij}}^* \mathbb{H}_{ B_{ij}} \right)_{sp}(l,n) &=\frac{4\pi^2 h}{N} \sqrt{\frac{k_p}{k_s}} e^{i \mathbf z_{ij} \cdot (k_p \bm\theta_n - k_s \mathbf d_l)} J_0\left( h |k_p \bm\theta_n - k_s \mathbf d_l| \right) \mathbf d_l^\perp \cdot \bm\theta_n;\\
    \left(\mathbb{H}_{ B_{ij}}^* \mathbb{H}_{ B_{ij}} \right)_{ss}(l,n) &=\frac{4\pi^2 h}{N} e^{i k_s \mathbf z_{ij} \cdot (\bm\theta_n - \mathbf d_l)} J_0\left( k_s h |\bm\theta_n - \mathbf d_l| \right) \mathbf d_l^\perp \cdot \bm\theta_n^\perp,
\end{align*}
where the definition of \(\mathbf d_l\) is given in \eqref{eq:incidentd} and
\[
\bm\theta_n = \left( \cos \sigma_n, \sin \sigma_n \right) \in \mathbb S, \quad \sigma_n = \frac{(n-1)2\pi}{N}, \quad 1 \leq n \leq N.
\]

In summary, the monotonicity operator $\mathcal A_{B_{ij}}$ associated with probing domain \(B_{ij}\) can be approximated by a $2N \times 2N$ \emph{discretized monotonicity matrix} $\mathbb A_{B_{ij}}$, where $\mathbb A_{B_{ij}}$ is defined as
\begin{equation}\label{discretized operator:A_B}
\mathbb A_{B_{ij}} :=
\begin{cases}
-\Re(\mathbb{F}^d) - \mathbb{H}_{B_{ij}}^* \mathbb{H}_{B_{ij}}, & 	\text{if $D$ is rigid}, \\
\Re(\mathbb{F}^n) - \mathbb{H}_{B_{ij}}^* \mathbb{H}_{B_{ij}}, & 	\text{if $D$ is traction-free}.
\end{cases}
\end{equation}

After obtaining $\mathbb A_{B_{ij}}$, we can determine whether a probing domain $ B_{ij} $ is contained in $ D $ by counting the number of negative eigenvalues of $\mathbb A_{B_{ij}}$, in accordance with the shape characterization results stated in Theorems \ref{Characterofimpenetrable scatterer} and \ref{Characterofimpenetrable scatterer2}. Given the discontinuous nature of this eigenvalue-counting procedure, we term this counting-based strategy the counting-based monotonicity sampling method, which is rigorously derived from the aforementioned monotonicity characterization theorems.

\subsection{Counting-based monotonicity sampling method}\label{subsec:discrete sampling}

Based on the numerical approximation $\mathbb A_{B_{ij}}$ defined in \eqref{discretized operator:A_B} of the monotonicity operator $\mathcal{A}_{B_{ij}}$ and the shape characterization results in
Theorems~\ref{Characterofimpenetrable scatterer} and~\ref{Characterofimpenetrable scatterer2}, we define the \emph{counting-based monotonicity indicator} $I_{count}:\mathcal{N}_h	\to \mathbb{N}$ by
\begin{equation}\label{eq:indicatorD}
I_{count}(\mathbf{z}_{ij}):=\#\left\{\lambda_{n,(ij)}\;\middle|\;\lambda_{n,(ij)} < -\sigma,\
1 \leq n \leq 2N \right\}, \quad -M \leq i,j \leq M,
\end{equation}
where $\lambda_{1,(ij)}, \lambda_{2,(ij)}, \dots, \lambda_{2N,(ij)}$ denote the eigenvalues of the discretized monotonicity matrix \(\mathbb A_{B_{ij}}\) and $\sigma > 0$ is a small predefined threshold introduced to enhance numerical stability.
We remark that $I_{count}(\mathbf{z}_{ij})$ is the discretized version of the continuous indicator ${\mathcal I}_{count}(B)$ given in \eqref{eq:indicatorContinuous}.

By Theorems~\ref{Characterofimpenetrable scatterer} and~\ref{Characterofimpenetrable scatterer2}, when the probing domain $B_{ij}$ centered at the sampling point $\mathbf{z}_{ij}$ satisfies $\overline{B_{ij}}\subseteq D$, the monotonicity operator $\mathcal{A}_{B_{ij}}$ possesses only finitely many negative eigenvalues, so that $I_{count}(\mathbf{z}_{ij})$ attains a relatively small value. Conversely, when $B_{ij}\not\subseteq D$, the number of negative eigenvalues is no longer guaranteed to be finite, and $I_{count}(\mathbf{z}_{ij})$ takes a relatively large value. Thus, the indicator $I_{count}$ distinguishes sampling points inside $D$ (small values) from those outside $D$ (large values), enabling shape reconstruction via scanning over all points in $\mathcal{N}_h$. This leads to Algorithm~\ref{alg:discrete monotonicity}, the counting-based monotonicity sampling method for shape imaging.

\begin{rem}
In the counting-based monotonicity indicator $I_{count}$ defined in \eqref{eq:indicatorD}, we count eigenvalues that are less than $-\sigma$ rather than directly counting all negative eigenvalues. This threshold is introduced to exclude eigenvalues near zero whose signs may be unreliable due to numerical discretization errors, thereby enhancing the stability of the imaging scheme.
\end{rem}

\begin{algorithm}[!htbp]
\caption{Counting-based monotonicity sampling method}
\label{alg:discrete monotonicity}
\begin{algorithmic}[1]
\State \textbf{Step 1:} Given $\omega, \lambda, \mu$, fix an incident direction $d$, and collect the corresponding far-field data generated by the scattering system.
\State \textbf{Step 2:} Choose a new incident direction $d \in \mathbb{S}$ and repeat Step 1 until all incident directions are covered.
\State \textbf{Step 3:} Using the data measured in Step 2, obtain the approximation  of the far-field operator $\mathbb{F}^d$ (or $\mathbb{F}^n$).
\State \textbf{Step 4:} Select a suitable grid partition $\mathcal{N}_h$ covering the scattering region $[-R, R]^2$. For each $\mathbf z_{ij} \in \mathcal{N}_h$, take the region $B_{ij}$ as the probing domain, and compute $\mathbb{H}_{B_{ij}}^* \mathbb{H}_{B_{ij}}$.
\State \textbf{Step 5:} Compute the eigenvalues of \(\mathbb{A}_{B_{ij}}\), and calculate the indicator \( I_{\mathrm{count}}(\mathbf z_{ij}) \).
\State \textbf{Step 6:} Select a new grid point $\mathbf z_{ij} \in \mathcal{N}_h$, repeat Steps 4–5 until all grid points are processed.
\State \textbf{Step 7:} Plot the indicator value at each sampling point.
\end{algorithmic}
\end{algorithm}

\begin{rem}\label{rem:al1}
During the validation of Algorithm~\ref{alg:discrete monotonicity} in Section~\ref{sec:Numerical example}, numerical experiments reveal that the counting-based monotonicity indicator $I_{count}$ is highly sensitive to noise. This sensitivity originates from the compactness of the monotonicity operator $\mathcal{A}_B$, compounded by the continuous dependence of eigenvalues on perturbations in the far-field data $\mathbf{u}^{\infty}$.

More precisely, denote by $\mathbf{u}^\infty_\delta$ and $\mathbb{F}_{\delta}$ the perturbed far-field pattern and the perturbed approximation matrix with noise level $\delta$, respectively. Let $\lambda$ and $\lambda_\delta$ be the eigenvalues of $\Re (\mathbb{F})$ and $\Re (\mathbb{F}_{\delta})$, respectively. Since $\Re (\mathbb{F})$ is a finite-dimensional Hermitian matrix, Weyl's monotonicity inequality~\cite{Kato13,SS90} ensures that these eigenvalues depend Lipschitz continuously on $\delta$. Consequently, the eigenvalues of $\mathbb{A}_{B}$ vary continuously with respect to perturbations in the far-field data.
However, since $\mathcal{A}_{B}$ is compact and self-adjoint,  its spectrum consists of real eigenvalues whose nonzero elements are isolated and can only accumulate at zero. Consequently, arbitrarily small perturbations in \(\mathbf{u}^{\infty}\) can induce sign changes in numerous eigenvalues clustered near zero, thereby undermining the reliability of counting-based imaging schemes. Nevertheless, eigenvalues near zero are of extremely small magnitudes, and their perturbations vary continuously with the noise level $\delta$. That is, the signs of the eigenvalues change discontinuously, while their magnitudes vary continuously with $\delta$. These observations motivate us to develop imaging functionals that are robust with respect to noise in Section~\ref{sec:Numerical algorithm}, which utilize the magnitudes of the eigenvalues rather than the sign information.

\end{rem}

\section{ summation-based monotonicity spectral sampling methods}\label{sec:Numerical algorithm}

In this section, we develop two novel monotonicity-based numerical algorithms for reconstructing the impenetrable scatterer. The algorithms are built upon
Theorems~\ref{Characterofimpenetrable scatterer} and~\ref{Characterofimpenetrable scatterer2}, and exploit the magnitudes of the negative eigenvalues of the monotonicity operator $\mathcal{A}_{B}$ defined in \eqref{operator:A_B} to design effective and robust indicator functionals.

In existing monotonicity-based methods (such as \cite{AG20,AG23}), including the indicator $\mathcal{I}_{count}(B)$ proposed in \eqref{eq:indicatorContinuous}, the shape is typically identified by counting the number of negative eigenvalues of $\mathcal{A}_{B}$. Such a criterion uses only the \emph{sign} information of the spectrum without accounting for the \emph{magnitudes} of the negative eigenvalues. Hence, the counting-based indicator may be regarded as a coarse measure that discards quantitative spectral information.

From the numerical results, we observe that when the probing domain $B$ approaches or intersects the true boundary $\partial D$, both the number and magnitudes of negative eigenvalues of $\mathcal{A}_B$ increase significantly, consistent with Theorems~\ref{Characterofimpenetrable scatterer} and~\ref{Characterofimpenetrable scatterer2}. Specifically, the dominant negative eigenvalues become substantially larger in magnitude when $B$ intersects $\partial D$ compared to $B \subseteq D$. This suggests that the boundary information is encoded in the negative spectral values of the monotonicity operator $\mathcal A_B$, rather than merely in the number of negative eigenvalues. Therefore, instead of considering only the counting indicator $\mathcal I_{count}(B)$ defined in \eqref{eq:indicatorContinuous}, it is natural to introduce an indicator defined via the sum of the negative part of the spectrum:
$$
\mathcal{I}_{sum}(B) := \sum_{\lambda_j(\mathcal{A}_B)<0} \lambda_j(\mathcal{A}_B),
$$
which quantifies the sum of the negative spectrum of \(\mathcal{A}_B\) and is more informative than the counting-based indicator \(\mathcal I_{count}\). In the noise-free case (Figure~\ref{fig:Dirichlet square}(a)), two probing domains $B_1 \subseteq D$ and $B_2 \subseteq \mathbb{R}^2 \setminus \overline{D}$ may yield the same number of negative eigenvalues. However, their negative eigenvalues can have substantially different magnitudes. In such a situation, the counting-based indicator \(\mathcal I_{count}\) fails to determine the boundary of $D$, whereas the summation-based indicator $\mathcal I_{sum}$ is still able to reconstruct $\partial D$, as illustrated in Figure~\ref{fig:Dirichlet square}(b).

The advantages of the summation-based indicator extend to the noisy regime. As discussed in Remark~\ref{rem:al1}, $\mathcal{I}_{sum}$ is more robust to noise than $\mathcal{I}_{count}$ because it exploits magnitude information, which varies continuously with perturbations. Numerical experiments confirm that $\mathcal{I}_{count}$ fails to reconstruct the scatterer shape under $10\%$ noise (Figure~\ref{fig:Dirichlet square}(d)), whereas $\mathcal{I}_{sum}$ stably reconstructs $\partial D$ at the same noise level (Figure~\ref{fig:Dirichlet square}(e)). Furthermore, since tiny eigenvalues near zero have signs that can flip under machine-precision perturbations, the counting-based indicator is sensitive to numerical rounding errors. In contrast, $\mathcal{I}_{sum}$ captures contributions from dominant negative spectral components and is therefore more resilient to round-off errors in practical implementations.

Therefore, instead of relying on the counting-based indicator $\mathcal I_{count}(B)$, we introduce a novel summation-based indicator $\mathcal I_{sum}(B)$ that quantifies the sum of the negative spectrum of $\mathcal{A}_{B}$, enabling more robust shape reconstruction of the scatterer \(D\). In the next subsection, we formally propose this new sampling method based on the negative spectral sum of the monotonicity operator \(\mathcal{A}_{B}\).

\subsection{Single-frequency monotonicity spectral sampling method}\label{subsec:soft monotonicity indicator}

In this subsection, we propose the \emph{single-frequency monotonicity spectral sampling method} described in Algorithm~\ref{alg:spectrum monotonicity}. Throughout this subsection, the incident wave is set at a single fixed angular frequency $\omega$, and the available data consist only of the far-field pattern $\mathbf{u}^\infty(\hat{x},d)$ measured at this fixed frequency $\omega$ for all observation directions $\hat{\mathbf x}\in\mathbb{S}$ and all incident
directions $\mathbf d\in\mathbb{S}$. 

We employ the grid partition \(\mathcal{N}_h\) and probing domains \(B_{ij}\) introduced in Subsection \ref{sub:Numerical approximation}. For each sampling point $\mathbf{z}_{ij}\in\mathcal{N}_h$, the discretized monotonicity matrix $\mathbb A_{B_{ij}}$ defined in \eqref{discretized operator:A_B} is assembled from the single-frequency far-field data via the discretized far-field operator $\mathbb{F}$ and the Herglotz wave operator $\mathbb{H}_{B_{ij}}$.

Let $\lambda_{1,(ij)}, \lambda_{2,(ij)}, \dots, \lambda_{2N,(ij)}$ be the eigenvalues of the discretized monotonicity matrix $\mathbb A_{B_{ij}}$. We define the \emph{single-frequency summation-based monotonicity indicator} $I_{sum}:\mathcal{N}_h	\to\mathbb{R}$ by
\begin{equation}\label{eq:soft-sec-indicator}
I_{sum}(\mathbf{z}_{ij}):= \operatorname{tr}\bigl((\mathbb A_{B_{ij}})_-\bigr)
=\sum_{\lambda_{k,(ij)} < 0}\lambda_{k,(ij)} ,
\end{equation}
where $(\mathbb A_{B_{ij}})_-$ denotes the negative part of $\mathbb A_{B_{ij}}$, defined spectrally by
$$
(\mathbb A_{B_{ij}})_-\,\phi_k
:=
\min(\lambda_{k,(ij)},\,0)\,\phi_k,
$$
with $\phi_k$ the eigenfunction corresponding to $\lambda_{k,(ij)}$.

The quantity $I_{sum}(\mathbf{z}_{ij})$ measures the sum of the negative spectrum of the discretized monotonicity matrix $\mathbb{A}_{B_{ij}}$ constructed from single-frequency far-field data. In contrast to the counting-based indicator $I_{count}$, which only detects the presence of negative eigenvalues, $I_{sum}$ exploits their magnitudes and is therefore expected to be more robust with respect to noise, a property that we verify numerically in the experiments below. Since both indicators require only single-frequency data, the method is computationally efficient. Based on the indicator $I_{sum}(\mathbf{z}_{ij})$ defined in \eqref{eq:soft-sec-indicator}, the detailed implementation steps of the newly developed algorithm are presented in Algorithm~\ref{alg:spectrum monotonicity}.

\begin{algorithm}[!htbp]
\caption{Single-frequency monotonicity  spectral sampling method}
\label{alg:spectrum monotonicity}
\begin{algorithmic}[1]
\State \textbf{Step 1:} Given $\omega, \lambda, \mu$, fix an incident direction $d$, and collect the corresponding far-field data generated by the scattering system.
\State \textbf{Step 2:} Choose a new incident direction $d \in \mathbb{S}$ and repeat Step 1 until all incident directions are covered.
\State \textbf{Step 3:} Using the data measured in Step 2, obtain the approximation  of the far-field operator $\mathbb{F}^d$ (or $\mathbb{F}^n$).
\State \textbf{Step 4:} Select a suitable grid partition $\mathcal{N}_h$ covering the scattering region $[-R, R]^2$. For each $\mathbf z_{ij} \in \mathcal{N}_h$, take the region $B_{ij}$ as the probing domain, and compute $\mathbb{H}_{B_{ij}}^* \mathbb{H}_{B_{ij}}$.
\State \textbf{Step 5:} Compute the eigenvalues of \(\mathbb {A}_{B_{ij}}\), and calculate the indicator \( I_{sum}(\mathbf z_{ij}) \).
\State \textbf{Step 6:} Select a new grid point $\mathbf z_{ij} \in \mathcal{N}_h$, repeat Steps 4–5 until all grid points are processed.
\State \textbf{Step 7:} Plot the indicator value at each sampling point.
\end{algorithmic}
\end{algorithm}

The numerical results in Figures~\ref{fig:Dirichlet cassini}(d) and~\ref{fig:Dirichlet cassini}(e) demonstrate that the single-frequency monotonicity spectral sampling method, based on the summation indicator $I_{sum}$ defined in \eqref{eq:soft-sec-indicator}, achieves superior numerical stability and more accurate boundary characterization compared to the counting-based method with indicator $I_{count}$ defined in \eqref{eq:indicatorD}. 
However, single-frequency far-field data carry limited geometric information because only one wavelength is used for probing. To exploit the multiscale geometric information available across different frequencies, we extend the framework and propose a \emph{multi-frequency monotonicity spectral sampling method} in next subsection.

\subsection{Multi-frequency monotonicity spectral sampling method}

The single-frequency indicator $I_{sum}(\mathbf{z}_{ij})$ defined in \eqref{eq:soft-sec-indicator} quantifies the sum of negative spectral of the discretized monotonicity matrix $\mathbb A_{B_{ij}}$ at a single prescribed frequency $\omega$. However, the spectral information encoded in $\mathbb A_{B_{ij}}$ is intrinsically scale-dependent: low frequencies are typically more stable and capture the coarse support of the scatterer $D$, while higher frequencies are more sensitive to fine boundary details and concave features. Restricting to a single frequency may lose useful information available at other scales. We therefore combine frequency-wise monotonicity spectral functionals into a multi-frequency indicator $I_{sum}^{\mathrm{MF}}$, which integrates multiscale geometric information for enhanced reconstruction.

We still employ the grid partition \(\mathcal{N}_h\) and probing domains \(B_{ij}\) introduced in Subsection \ref{sub:Numerical approximation}. For each sampling point $\mathbf{z}_{ij}\in\mathcal{N}_h$, we recall the definition of the discretized monotonicity matrix $\mathbb A_{B_{ij}}$ in \eqref{discretized operator:A_B}. For a fixed angular frequency $\omega > 0$, we introduce the frequency-dependent discretized monotonicity matrix $\mathbb A_{B_{ij}}(\omega)$ as follows:
\begin{equation*}
\mathbb A_{B_{ij}}(\omega)
:=
\begin{cases}
 -\Re\bigl(\mathbb{F}^d(\omega)\bigr) - \mathbb{H}_{B_{ij},\omega}^*\mathbb{H}_{B_{ij},\omega}, & \text{$D$ is rigid,} \\[6pt]
 \Re\bigl(\mathbb{F}^n(\omega)\bigr)  - \mathbb{H}_{B_{ij},\omega}^*\mathbb{H}_{B_{ij},\omega}, & \text{$D$ is traction-free,}
\end{cases}
\end{equation*}
where $\mathbb{F}^{d}(\omega)$ and $\mathbb{F}^{n}(\omega)$ denote the numerical approximations of the far-field operators at frequency $\omega$, and $\mathbb{H}_{B_{ij},\omega}$ denotes the numerical approximation of the Herglotz wave operator associated with the probing domain $B_{ij}$ at the same frequency. For each fixed $\omega$, the shape characterization results in Theorems~\ref{Characterofimpenetrable scatterer} and~\ref{Characterofimpenetrable scatterer2} assert that the negative spectral structure of $\mathbb A_{B_{ij}}(\omega)$ encodes information about the inclusion relation between $B$ and $D$.

Let $\omega_1, \dots, \omega_L > 0$ be a given set of frequencies. For each frequency $\omega_\ell \in \{ \omega_1, \dots, \omega_L\}$, let $\{\lambda_k(\mathbb{A}_{B_{ij}}(\omega_\ell))\}_{k=1,\dots,2N}$ denote the eigenvalues of the discretized monotonicity matrix $\mathbb{A}_{B_{ij}}(\omega_\ell)$. The frequency-wise monotonicity spectral functional is defined by
\begin{equation*}
S_{B_{ij}}(\omega_\ell) := \sum_{\lambda_k(\mathbb{A}_{B_{ij}}(\omega_\ell))<0} \lambda_k\bigl(\mathbb{A}_{B_{ij}}(\omega_\ell)\bigr), \qquad \ell = 1,\dots,L.
\end{equation*}
Note that $S_{B_{ij}}(\omega_\ell) \leq 0$, and when $L=1$, $S_{B_{ij}}(\omega_\ell)$ coincides with the single-frequency indicator $I_{sum}(\mathbf{z}_{ij})$ defined in \eqref{eq:soft-sec-indicator}.
We then define the \emph{multi-frequency summation-based monotonicity indicator} $I_{sum}^{\mathrm{MF}}:\mathcal{N}_h \to \mathbb{R}$ by
\begin{equation}\label{eq:mf-soft}
I_{sum}^{\mathrm{MF}}(\mathbf{z}_{ij}) := \sum_{\ell=1}^{L} a_\ell S_{B_{ij}}(\omega_\ell),
\end{equation}
where $a_\ell > 0$ are prescribed positive weights satisfying $\sum_{\ell=1}^{L} a_\ell = 1$. The indicator $I_{sum}^{\mathrm{MF}}(\mathbf{z}_{ij})$ is a weighted combination of the frequency-wise functionals $S_{B_{ij}}(\omega_\ell)$ with normalized weights, and satisfies $I_{sum}^{\mathrm{MF}}(\mathbf{z}_{ij}) \leq 0$.

The multi-frequency indicator $I_{sum}^{\mathrm{MF}}$ inherits the monotonicity-based interpretation from each single-frequency component: for every fixed $\omega_\ell$, the quantity $S_{B_{ij}}(\omega_\ell)$ measures the sum of the negative spectral of $\mathbb{A}_{B_{ij}}(\omega_\ell)$, and $I_{sum}^{\mathrm{MF}}(\mathbf{z}_{ij})$ aggregates these contributions across all prescribed frequencies in a balanced manner. 

The multi-frequency summation-based indicator $I_{sum}^{\mathrm{MF}}$ offers key advantages. By integrating spectral information across multiple frequencies, it captures both coarse geometric support from low-frequency data and fine boundary details from high-frequency data, achieving superior boundary localization compared to single-frequency methods. Moreover, $I_{sum}^{\mathrm{MF}}$ is more robust than $I_{sum}$ and $I_{count}$. First, it inherits the stability of the single-frequency indicator $I_{sum}$, which transforms the discontinuous counting process into a continuous summation process, thereby achieving continuity with respect to perturbations in the far-field data (as discussed in Remark~\ref{rem:al1}). Second, the multi-frequency aggregation further mitigates the influence of noise, since noise-induced perturbations at individual frequencies are unlikely to affect all frequencies simultaneously in a coherent manner. By combining these two mechanisms, $I_{sum}^{\mathrm{MF}}$ achieves enhanced robustness compared to both single-frequency and counting-based methods.

Based on the indicator $I_{sum}^{\mathrm{MF}}(\mathbf{z}_{ij})$ defined in \eqref{eq:mf-soft}, the detailed implementation steps of the \emph{multi-frequency monotonicity spectral sampling method} are presented in Algorithm~\ref{alg:multi-frequency spectrum monotonicity}.

\begin{algorithm}[!htbp]
\caption{Multi-frequency monotonicity spectral sampling method}
\label{alg:multi-frequency spectrum monotonicity}
\begin{algorithmic}[1]
\State \textbf{Step 1:} Given $\lambda, \mu$ and  \(L\) frequencies $\omega_1, \omega_2, \dots, \omega_L$. Fix an incident direction $d$ and a frequency $\omega_\ell$, and collect the corresponding far-field data generated by the scattering system.
\State \textbf{Step 2:} For the frequency $\omega_\ell$ chosen in Step 1, choose a new incident direction $d \in \mathbb{S}$ and repeat Step 1 until all incident directions are covered.
\State \textbf{Step 3:} Using the data measured in Step 2, obtain the approximation  of the far-field operator $\mathbb{F}^d(\omega_\ell)$ (or $\mathbb{F}^n(\omega_\ell)$).
\State \textbf{Step 4:} Select a suitable grid partition $\mathcal{N}_h$ covering the scattering region $[-R, R]^2$. For each $\mathbf z_{ij} \in \mathcal{N}_h$, take the region $B_{ij}$ as the probing domain, and compute $\mathbb{H}_{B_{ij},\omega_\ell}^* \mathbb{H}_{B_{ij,\omega_\ell}}$.
\State \textbf{Step 5:} Choose a new frequency $\omega_\ell$ and repeat Steps 1- 4 until all $L$ frequencies under consideration are exhausted.
\State \textbf{Step 5:} Calculate the indicator function \( I_{sum}^{\mathrm{MF}}(\mathbf z_{ij}) \).
\State \textbf{Step 6:} Select a new grid point $\mathbf z_{ij} \in \mathcal{N}_h$, repeat Steps 4–5 until all grid points are processed.
\State \textbf{Step 7:} Plot the indicator function value at each sampling point.
\end{algorithmic}
\end{algorithm}

At the end of this section, we present a remark to derive the equivalent formulation of the three sampling methods proposed in this work.

\begin{rem}
For rigid impenetrable scatterers, the indicators $I_{count}$, $I_{sum}$, and $I_{sum}^{\mathrm{MF}}$ are defined based on the negative eigenvalues of the discretized monotonicity matrix $-\Re(\mathbb{F}^d)-\mathbb{H}_B^*\mathbb{H}_B$, in accordance with Theorems~\ref{Characterofimpenetrable scatterer} and~\ref{Characterofimpenetrable scatterer2}. Equivalently, these indicators can be computed using the positive eigenvalues of $\Re(\mathbb{F}^d)+\mathbb{H}_B^*\mathbb{H}_B$. All numerical results for rigid impenetrable scatterers in Section~\ref{sec:Numerical example} were computed using this equivalent formulation.
\end{rem}

\section{ Numerical experiments}\label{sec:Numerical example}
In this section, we present extensive numerical experiments to validate the effectiveness and robustness of the three proposed algorithms: the counting-based monotonicity sampling method (Algorithm~\ref{alg:discrete monotonicity}), the single-frequency monotonicity spectral sampling method (Algorithm~\ref{alg:spectrum monotonicity}), and the multi-frequency monotonicity spectral sampling method (Algorithm~\ref{alg:multi-frequency spectrum monotonicity}).

For the forward elastic scattering problem, the synthetic far-field data are generated by solving the two-dimensional Navier equation subject to Dirichlet and Neumann boundary conditions via the boundary integral equation method. To simulate measurement errors, we add complex Gaussian noise to the synthetic far-field operator as
\begin{equation*}
\mathbf{u}^{\infty}_\delta := \mathbf{u}^\infty  + \delta\|\mathbf{u}^\infty\| \, \boldsymbol{\eta},
\label{eq:noisy_data}
\end{equation*}
where $\boldsymbol{\eta}$ is a complex-valued random vector and drawn from the standard complex normal distribution $\mathcal{CN}(0,1)$, and $\delta > 0$ denotes the relative noise level (e.g., $\delta = 0.01$ corresponds to $1\%$ noise). Selected experiments are conducted at multiple noise levels to systematically examine the noise robustness of each algorithm.

\subsection{Numerical examples for rigid impenetrable scatterers}
In this subsection, we treat $D$ as a rigid impenetrable scatterer subject to the Dirichlet boundary condition \eqref{eq:DirichletBC}. We first validate the counting-based monotonicity sampling method (Algorithm~\ref{alg:discrete monotonicity}) in Example~\ref{ex:Dirichlet discrete}, examining both its reconstruction performance at different frequencies and its noise sensitivity. In Example~\ref{ex:Dirichlet spectrum}, we then evaluate the counting-based monotonicity sampling method (Algorithm~\ref{alg:discrete monotonicity}), the single-frequency and multi-frequency monotonicity spectral sampling methods (Algorithms~\ref{alg:spectrum monotonicity} and \ref{alg:multi-frequency spectrum monotonicity}) for various impenetrable scatterers, and present a comprehensive comparison of the noise robustness and reconstruction quality across all three algorithms.

\begin{example}\label{ex:Dirichlet discrete}

We consider two rigid impenetrable scatterers: a Cassini-shaped and a kite-shaped scatterer, which are embedded in a homogeneous isotropic background medium with Lam\'{e} parameters $\lambda = 1$ and $\mu = 5$. A uniform rectangular sampling grid $\mathcal{N}_h$ with step size $h = 0.01$ over $[-2,2]^2$ yields $401 \times 401$ grid points. The far-field data \(\mathbf u^{\infty}\) that form matrix \(\mathbb{F}^d\) is calculated via the Nystr\"{o}m method, and we set the threshold parameter \(\sigma = 2 \times 10^{-10}\) for \(I_{count}\) (defined in \eqref{eq:indicatorD}).

We first investigate the reconstruction quality at two frequencies, $\omega = 10$ and $\omega = 50$, for the Cassini-shaped scatterer using $N = 32$ incident and observation directions. As shown in Figure~\ref{fig:cassini}, the reconstruction quality improves significantly at higher frequencies, a result consistent with the fact that shorter wavelengths resolve finer geometric features.

Figure~\ref{fig:kite discrete dirichlet}(a) displays the counting-based indicator $I_{count}(\mathbf{z}_{ij})$ for the kite-shaped scatterer at $\omega = 12$ with $N = 32$ directions in the noise-free case. The lowest level set provides a sharp approximation to the true boundary, accurately capturing both the overall shape and local geometric features. We next corrupt $\mathbb{F}^d$ with additive Gaussian noise at relative noise level $\delta = 0.1\%$. Figure~\ref{fig:kite discrete dirichlet}(b) shows that the counting-based indicator becomes compromised, with visible loss of boundary sharpness. This performance degradation reveals the inherent sensitivity of binary counting to small eigenvalue perturbations, motivating the use of summation-based indicators $I_{sum}$ and $I_{sum}^{\mathrm{MF}}$ in subsequent examples.

\end{example}

\begin{figure}[htbp]
    \centering
    \begin{minipage}{0.45\textwidth}
        \centering
        \includegraphics[width=\linewidth]{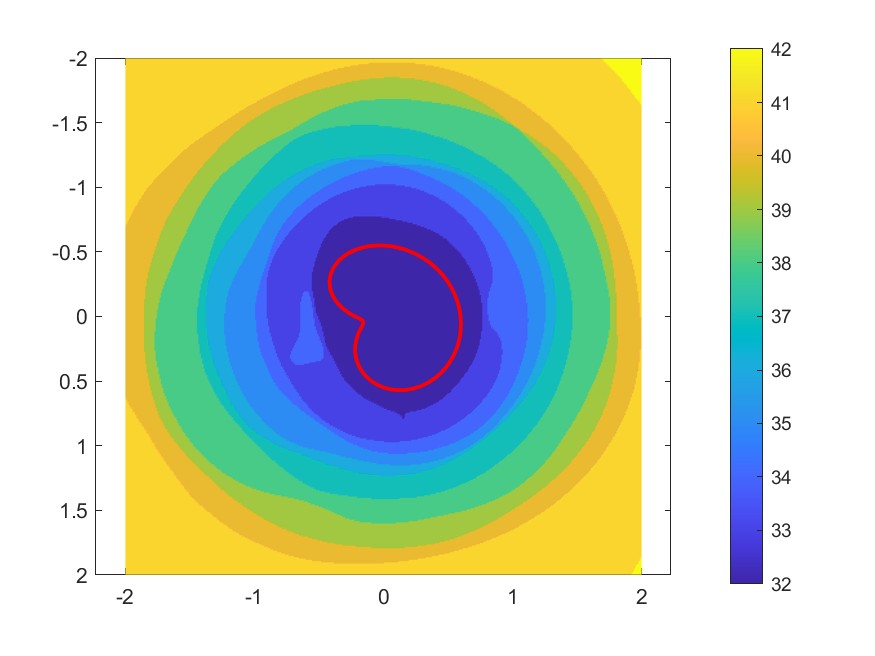}
        \vspace{-0.5em} 
        \centering (a)
    \end{minipage}
    \hfill
   \begin{minipage}{0.45\textwidth}
        \centering
        \includegraphics[width=\linewidth]{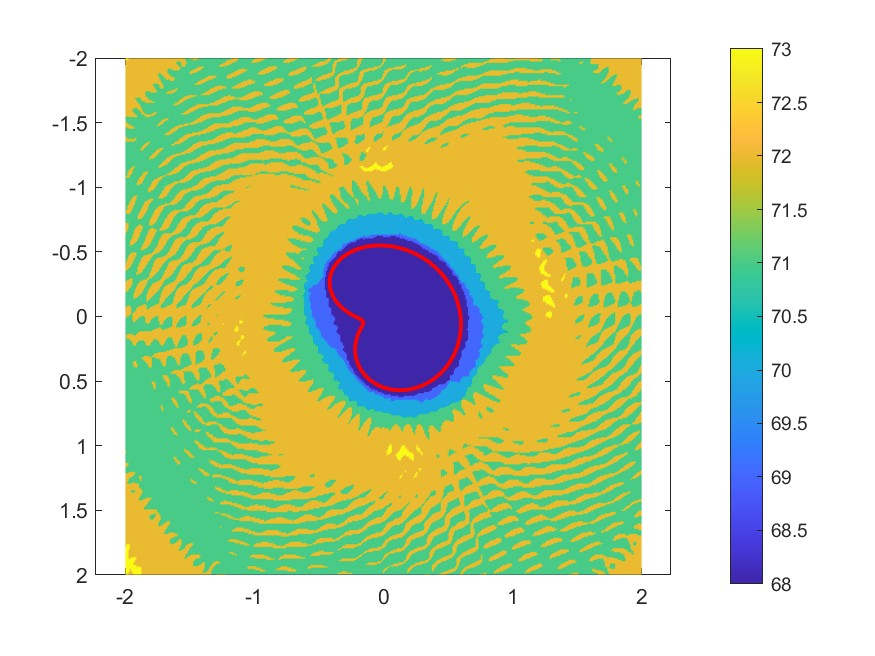}
        \vspace{-0.5em}
        \centering (b)
    \end{minipage}
   \caption{Reconstruction results for the Cassini-shaped rigid impenetrable scatterer obtained by the counting-based monotonicity sampling method (Algorithm~\ref{alg:discrete monotonicity}). (a): $\omega = 10$, $N = 32$. (b): $\omega = 50$, $N = 32$. The solid red curve denotes the true boundary $\partial D$.}
    \label{fig:cassini}
\end{figure}

\begin{figure}[htbp]
    \centering
    \begin{minipage}{0.45\textwidth}
        \centering
        \includegraphics[width=\textwidth]{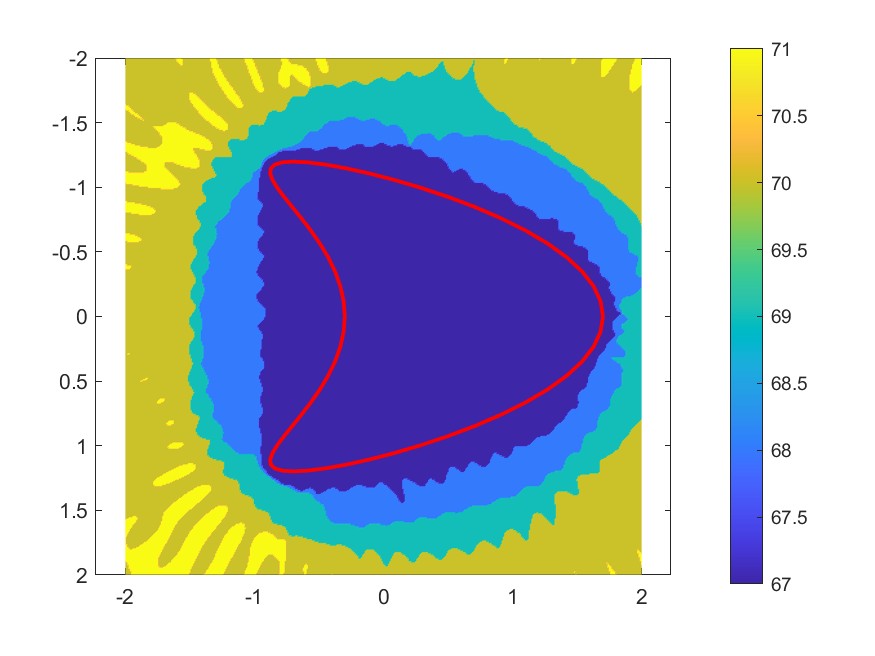}
        \vspace{-0.5em} 
        \centering (a)
    \end{minipage}
    \hfill
   \begin{minipage}{0.45\textwidth}
        \centering
        \includegraphics[width=\textwidth]{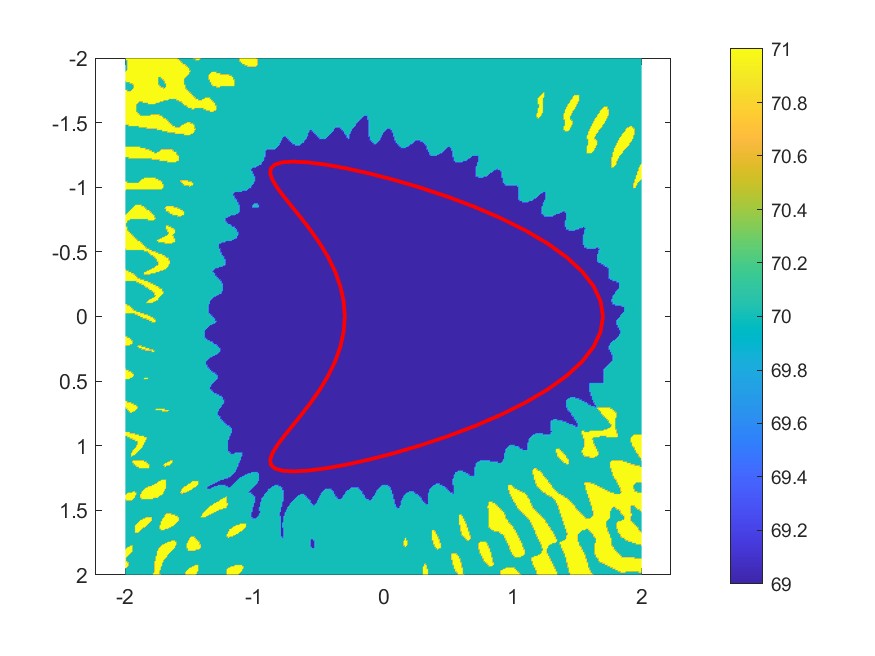}
        \vspace{-0.5em}
        \centering (b)
    \end{minipage}
   \caption{Reconstruction results for the kite-shaped rigid impenetrable scatterer obtained by the counting-based monotonicity sampling method (Algorithm~\ref{alg:discrete monotonicity}) at $\omega = 30$, $N = 32$. (a): Noise-free case ($\delta = 0$). (b): Noisy case with relative noise level $\delta = 0.1\%$. The solid red curve denotes the true boundary  $\partial D$.}
    \label{fig:kite discrete dirichlet}
\end{figure}

Example~\ref{ex:Dirichlet discrete} demonstrates that the counting-based monotonicity sampling method (Algorithm~\ref{alg:discrete monotonicity}) produces acceptable reconstructions at a relative noise level of $\delta = 0.1\%$ (see Figure~\ref{fig:kite discrete dirichlet}). However, the performance deteriorates significantly as the noise level increases. At $\delta = 10\%$, the indicator $I_{count}$ fails to produce meaningful reconstructions, making it difficult to determine the scatterer's location and shape (Figures~\ref{fig:Dirichlet square}(d), \ref{fig:Dirichlet cassini}(d), and \ref{fig:Dirichlet kite}(d)). This degradation is consistent with the theoretical observation that the binary counting mechanism is inherently discontinuous with respect to eigenvalue perturbations, rendering clustered eigenvalues near zero vulnerable to sign changes induced by noise.

In contrast, the newly proposed single-frequency monotonicity spectral sampling method (Algorithm~\ref{alg:spectrum monotonicity}) and multi-frequency monotonicity spectral sampling method (Algorithm~\ref{alg:multi-frequency spectrum monotonicity}) exploit the magnitudes of the negative eigenvalues of the monotonicity operator $\mathcal{A}_{ij}$ rather than relying on binary sign-counting. Consequently, these methods are expected to exhibit superior robustness with respect to noise, a property that will be demonstrated explicitly in the subsequent numerical experiments.

\begin{example}\label{ex:Dirichlet spectrum}
We consider three rigid impenetrable scatterers: a square, a Cassini-shaped , and a kite-shaped scatterer, which are embedded in a homogeneous isotropic background medium with Lam\'{e} parameters $\lambda = 0.2$ and $\mu = 1$. A uniform rectangular sampling grid $\mathcal{N}_h$ with step size $h = 0.02$ over $[-2,2]^2$ yields $201 \times 201$ grid points. We employ $N = 32$ incident and observation directions throughout.

For the counting-based monotonicity indicator \(I_{count}\), the threshold $\sigma = 1 \times 10^{-12}$. Both the counting-based monotonicity sampling method (Algorithm~\ref{alg:discrete monotonicity}) and single-frequency monotonicity spectral sampling method (Algorithm~\ref{alg:spectrum monotonicity}) adopt the fixed frequency $\omega = 18$. For the multi-frequency monotonicity spectral sampling method (Algorithm~\ref{alg:multi-frequency spectrum monotonicity}), we employ four frequencies $\omega \in \{12, 14, 16, 18\}$ with uniform weights $a_\ell = 1/4$ in the indicator $I_{sum}^{\mathrm{MF}}(\mathbf{z}_{ij})$.
The three indicators $I_{count}$, $I_{sum}$, and $I_{sum}^{\mathrm{MF}}$ are computed and visualized as color-coded maps for each scatterer. Figures~\ref{fig:Dirichlet square}, \ref{fig:Dirichlet cassini}, and \ref{fig:Dirichlet kite} display reconstruction results for the three scatterers, with the first row showing noise-free cases ($\delta = 0$) and the second row showing results under $\delta = 10\%$ additive noise.

In the noise-free case, the counting-based monotonicity sampling method correctly identifies the location and overall geometry of each scatterer (Figures~\ref{fig:Dirichlet square}(a), \ref{fig:Dirichlet cassini}(a), and \ref{fig:Dirichlet kite}(a)); however, the reconstructed images exhibit visible artifacts, particularly for scatterers with concave boundaries such as the Cassini-shaped and kite-shaped domains (Figures~\ref{fig:Dirichlet cassini}(a) and \ref{fig:Dirichlet kite}(a)). This difficulty is attributed to high-curvature concave structures that generate stronger multiple scattering and diffraction effects, introducing perturbations into the far-field data. By contrast, both spectral sampling methods yield substantially cleaner reconstructions by exploiting multiscale spectral information to mitigate these scattering-induced perturbations. The single-frequency indicator $I_{sum}$ concentrates its largest values near the true boundary $\partial D$, although isolated bright spots may appear in the interior (Figures~\ref{fig:Dirichlet square}(b), \ref{fig:Dirichlet cassini}(b), and ef{fig:Dirichlet kite}(b)). The multi-frequency indicator $I_{sum}^{\mathrm{MF}}$ further improves reconstruction quality by suppressing interior artifacts and producing sharper boundary profiles, particularly for concave regions (Figures~\ref{fig:Dirichlet square}(c), \ref{fig:Dirichlet cassini}(c), and \ref{fig:Dirichlet kite}(c)). This improvement is attributed to multiscale aggregation of spectral information across frequencies, enabling the capture of both coarse geometric features at low frequencies and fine boundary details at high frequencies, while effectively resolving features that are challenging for the counting-based approach.

Under $\delta = 10\%$ additive noise, the counting-based method deteriorates dramatically, with noise-induced artifacts obscuring the scatterer location (Figures~\ref{fig:Dirichlet square}(d), \ref{fig:Dirichlet cassini}(d), and \ref{fig:Dirichlet kite}(d)). This degradation is consistent with the theoretical observation that binary counting is inherently discontinuous with respect to eigenvalue perturbations. The single-frequency spectral method is considerably more stable, preserving main structural features despite noticeable blurring (Figures~\ref{fig:Dirichlet square}(e), \ref{fig:Dirichlet cassini}(e), and \ref{fig:Dirichlet kite}(e)). The multi-frequency method exhibits the strongest robustness, maintaining clear boundary localization and accurate recovery of concave features even under substantial noise contamination (Figures~\ref{fig:Dirichlet square}(f), \ref{fig:Dirichlet cassini}(f), and \ref{fig:Dirichlet kite}(f)). These results demonstrate that the spectral methods' advantage over counting-based approaches is particularly pronounced for geometrically complex scatterers with concave boundaries, where the multiscale aggregation effectively resolves features that challenge the binary counting mechanism.

\end{example}

The numerical results in Example \ref{ex:Dirichlet spectrum} reveal a clear hierarchy in reconstruction quality and robustness among the three proposed algorithms. The counting-based monotonicity sampling method is directly derived from Theorem \ref{Characterofimpenetrable scatterer}, but exhibits limited stability under noisy data and limited resolution of fine geometric features, see Figures~\ref{fig:Dirichlet square}(d), \ref{fig:Dirichlet cassini}(d) and \ref{fig:Dirichlet kite}(d). The single-frequency monotonicity spectral sampling method provides a marked improvement in both numerical stability and imaging quality by exploiting the magnitudes of the negative eigenvalues of $\mathcal{A}_{ij}$, see Figures~\ref{fig:Dirichlet square}(e), \ref{fig:Dirichlet cassini}(e) and \ref{fig:Dirichlet kite}(e). The multi-frequency monotonicity spectral sampling method achieves the best overall performance across all tested scenarios, delivering the highest reconstruction fidelity, the strongest robustness to additive noise, and the most accurate
recovery of concave boundary structures, see Figures~\ref{fig:Dirichlet square}(f), \ref{fig:Dirichlet cassini}(f) and \ref{fig:Dirichlet kite}(f). These observations confirm the effectiveness of the proposed spectral indicators $I_{sum}$ and $I_{sum}^{\mathrm{MF}}$ as improvements over the classical counting-based criterion $I_{count}$.

\begin{figure}[htbp]
    \centering
    
    \begin{minipage}{0.32\textwidth}
        \centering
        \includegraphics[width=\linewidth]{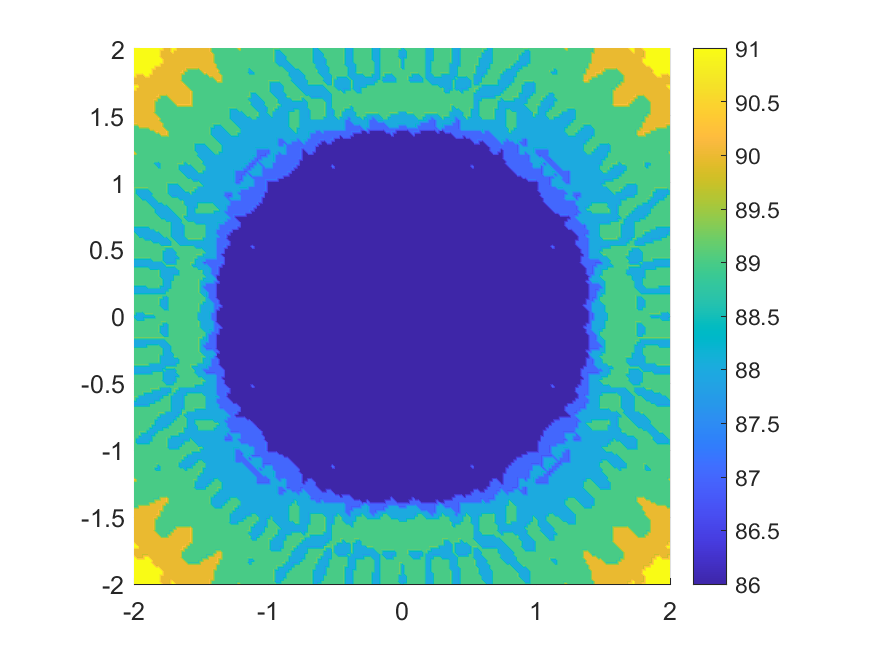}
        \vspace{-0.5em} 
        \centering (a)
    \end{minipage}
    \hfill
   \begin{minipage}{0.32\textwidth}
        \centering
        \includegraphics[width=\linewidth]{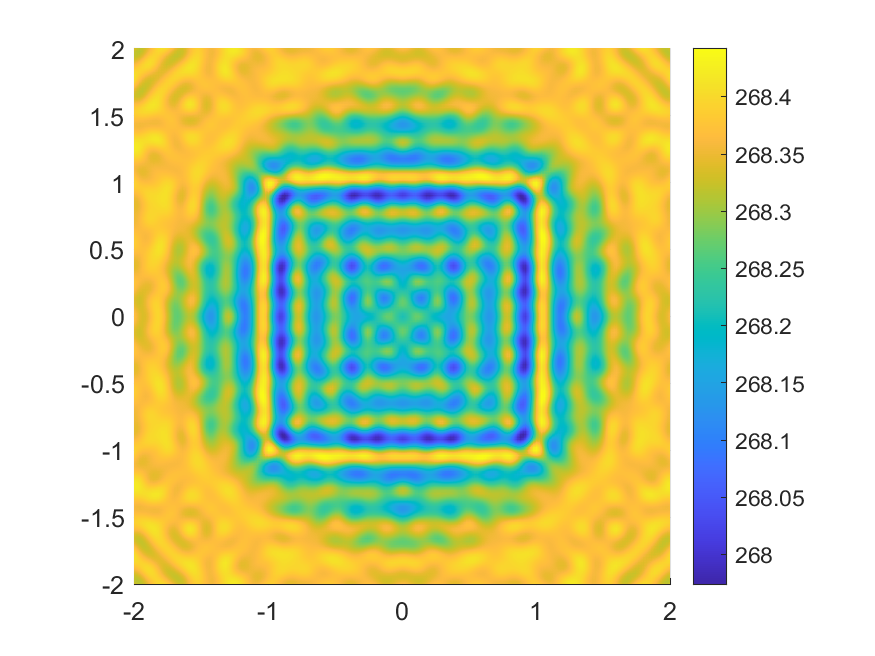}
        \vspace{-0.5em}
        \centering (b)
    \end{minipage}
    \hfill
    \begin{minipage}{0.32\textwidth}
        \centering
        \includegraphics[width=\linewidth]{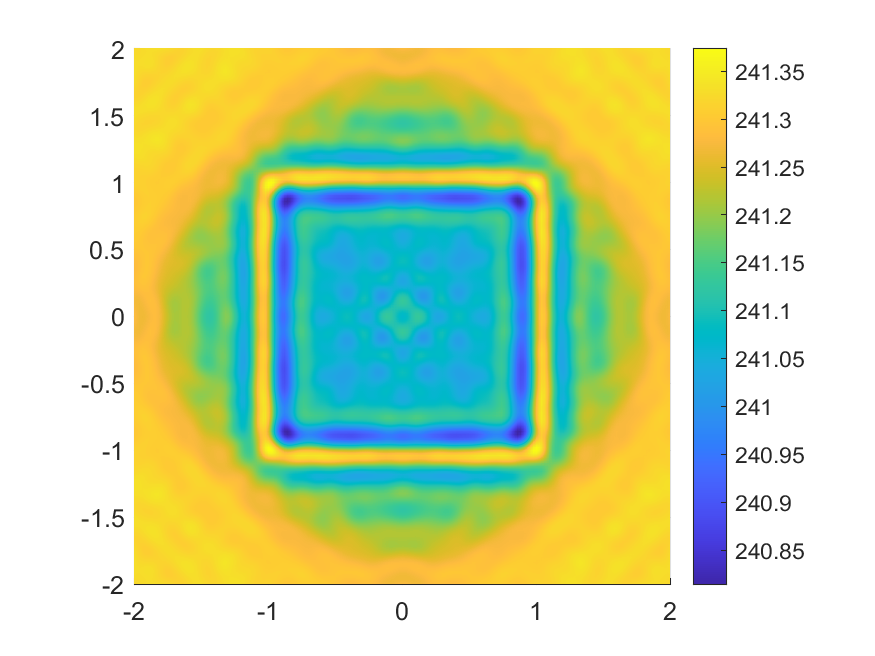}
        \vspace{-0.5em}
        \centering (c)
    \end{minipage}

    \vspace{1em}

    \begin{minipage}{0.32\textwidth}
        \centering
        \includegraphics[width=\linewidth]{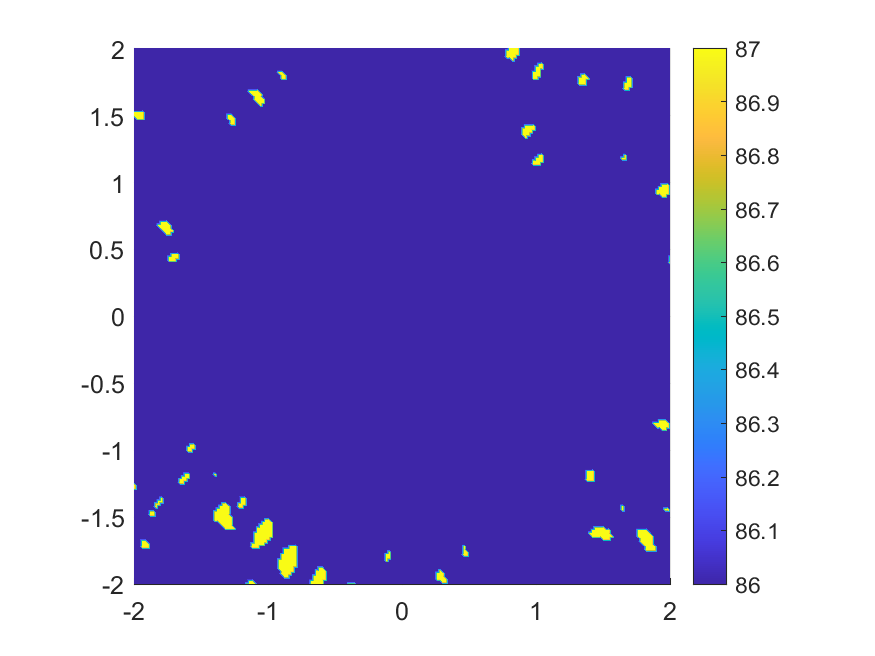}
        \vspace{-0.5em}
        \centering (d)
    \end{minipage}
    \hfill
    \begin{minipage}{0.32\textwidth}
        \centering
        \includegraphics[width=\linewidth]{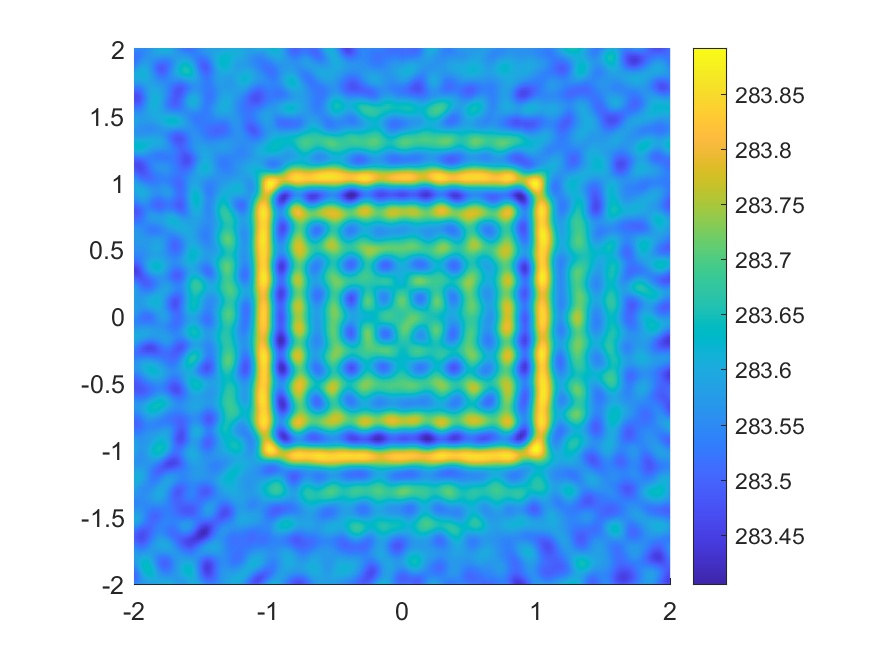}
        \vspace{-0.5em}
        \centering (e)
    \end{minipage}
    \hfill
    \begin{minipage}{0.32\textwidth}
        \centering
        \includegraphics[width=\linewidth]{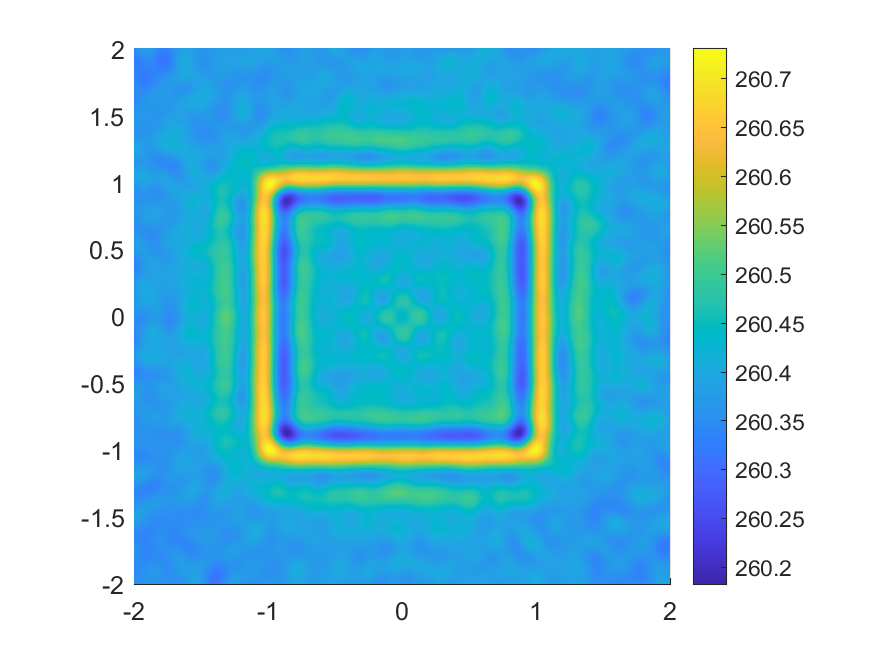}
        \vspace{-0.5em}
        \centering (f)
    \end{minipage}
    \caption{Reconstruction results for the square-shaped rigid impenetrable scatterer obtained by the three proposed sampling methods at $\omega = 18$, $N = 32$. The first row corresponds to the noise-free case ($\delta = 0$), and the second row corresponds to data contaminated by additive noise at relative noise level $\delta = 10\%$. (a), (d): counting-based monotonicity sampling method (Algorithm~\ref{alg:discrete monotonicity}). (b), (e): single-frequency monotonicity spectral sampling method (Algorithm~\ref{alg:spectrum monotonicity}). (c), (f): multi-frequency monotonicity spectral sampling method (Algorithm~\ref{alg:multi-frequency spectrum monotonicity}).}
    \label{fig:Dirichlet square}
\end{figure}
\begin{figure}[htbp]
    \centering
    \begin{minipage}{0.32\textwidth}
        \centering
        \includegraphics[width=\linewidth]{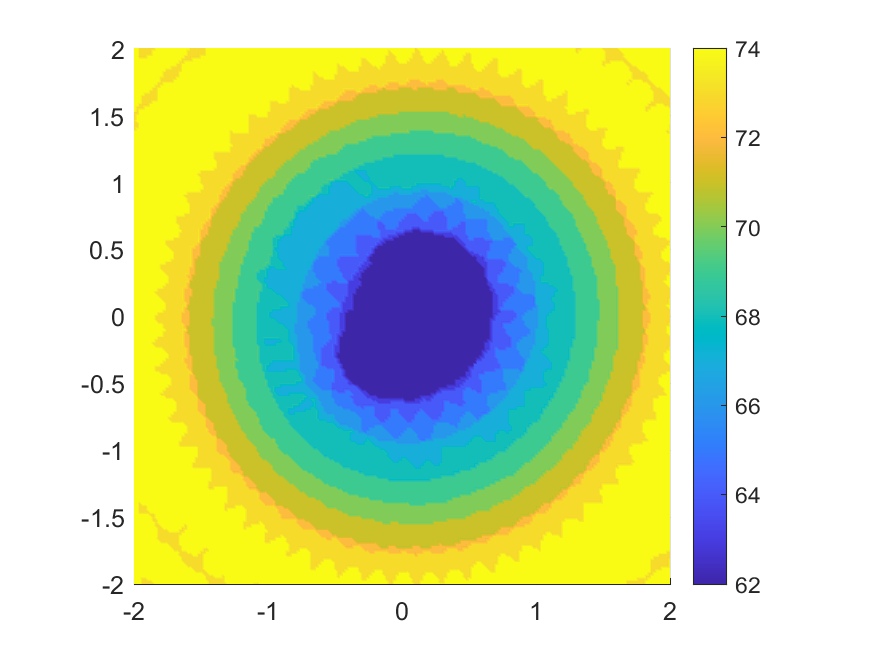}
        \vspace{-0.5em} 
        \centering (a)
    \end{minipage}
    \hfill
   \begin{minipage}{0.32\textwidth}
        \centering
        \includegraphics[width=\linewidth]{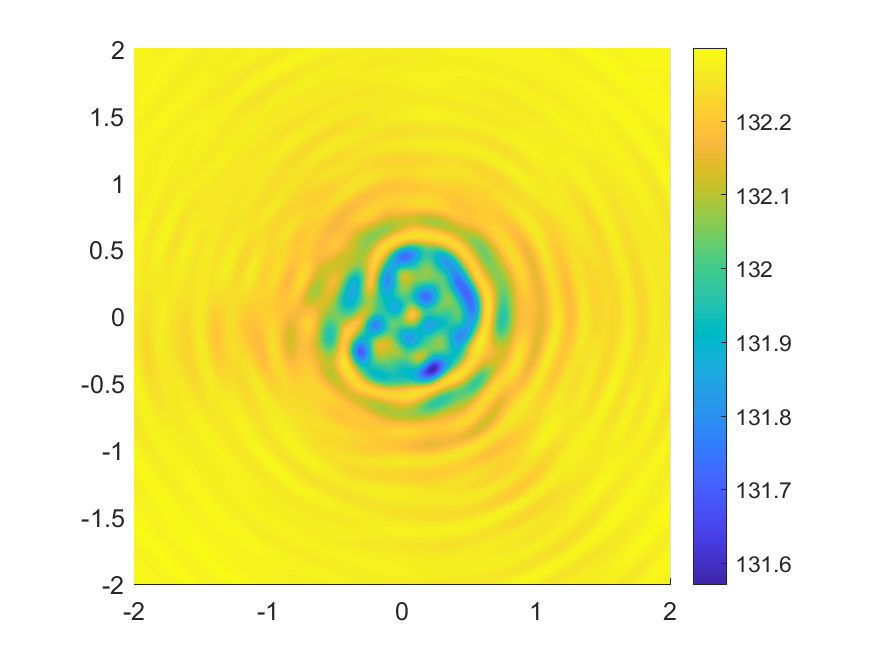}
        \vspace{-0.5em}
        \centering (b)
    \end{minipage}
    \hfill
    \begin{minipage}{0.32\textwidth}
        \centering
        \includegraphics[width=\linewidth] {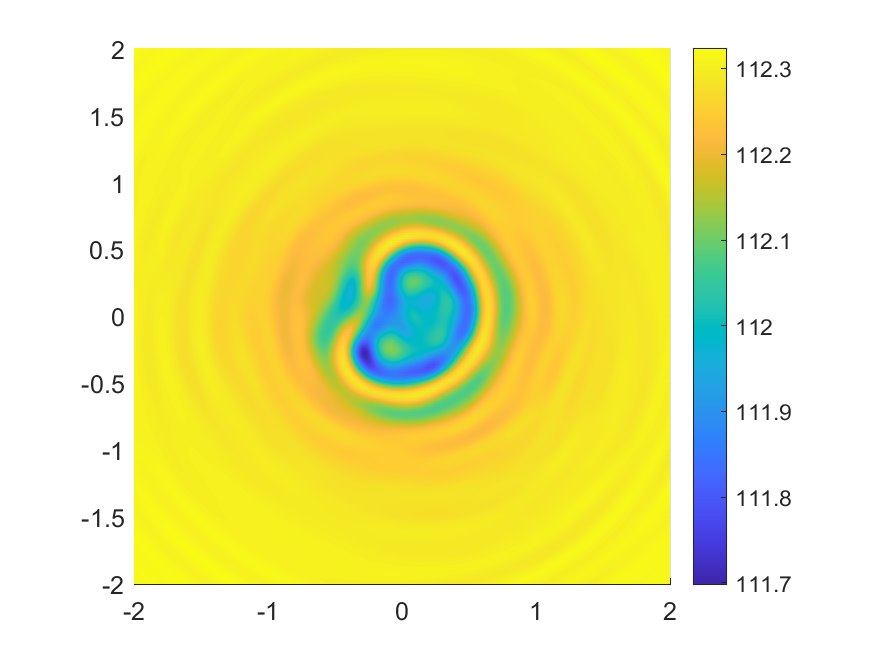}
        \vspace{-0.5em}
        \centering (c)
    \end{minipage}

    \vspace{1em}

    \begin{minipage}{0.32\textwidth}
        \centering
        \includegraphics[width=\linewidth]{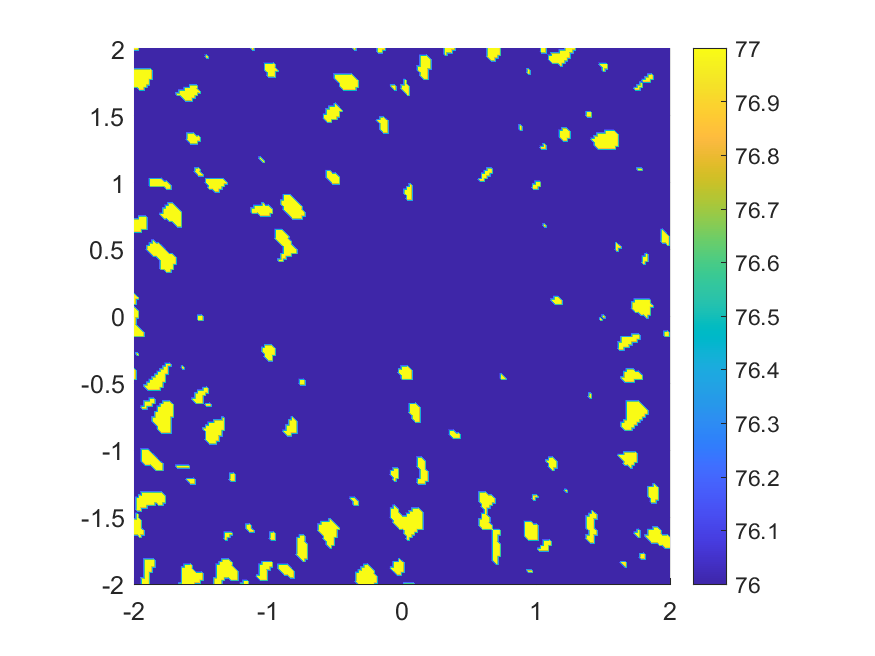}
        \vspace{-0.5em}
        \centering (d)
    \end{minipage}
    \hfill
    \begin{minipage}{0.32\textwidth}
        \centering
        \includegraphics[width=\linewidth]{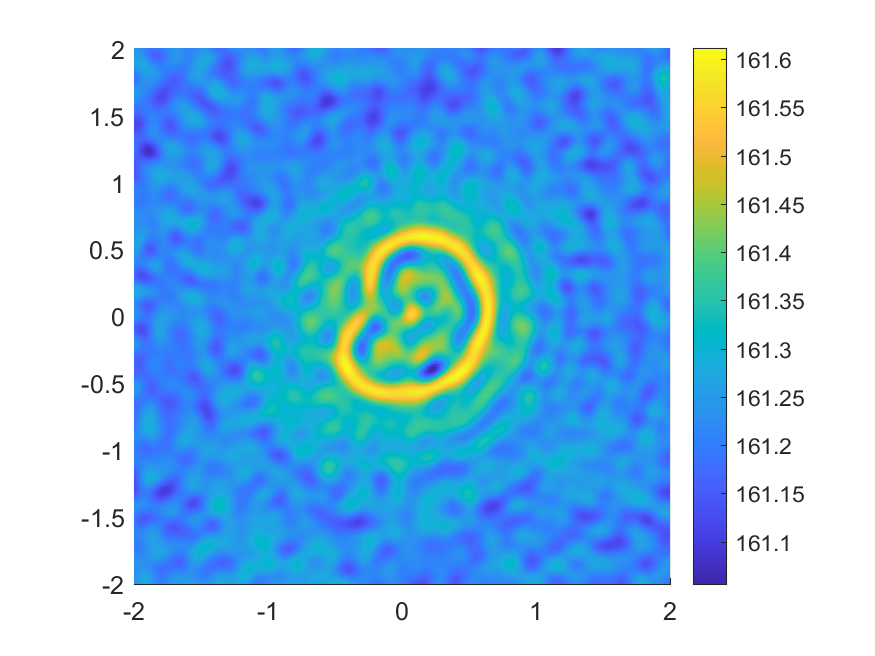}
        \vspace{-0.5em}
        \centering (e)
    \end{minipage}
    \hfill
    \begin{minipage}{0.32\textwidth}
        \centering
        \includegraphics[width=\linewidth] {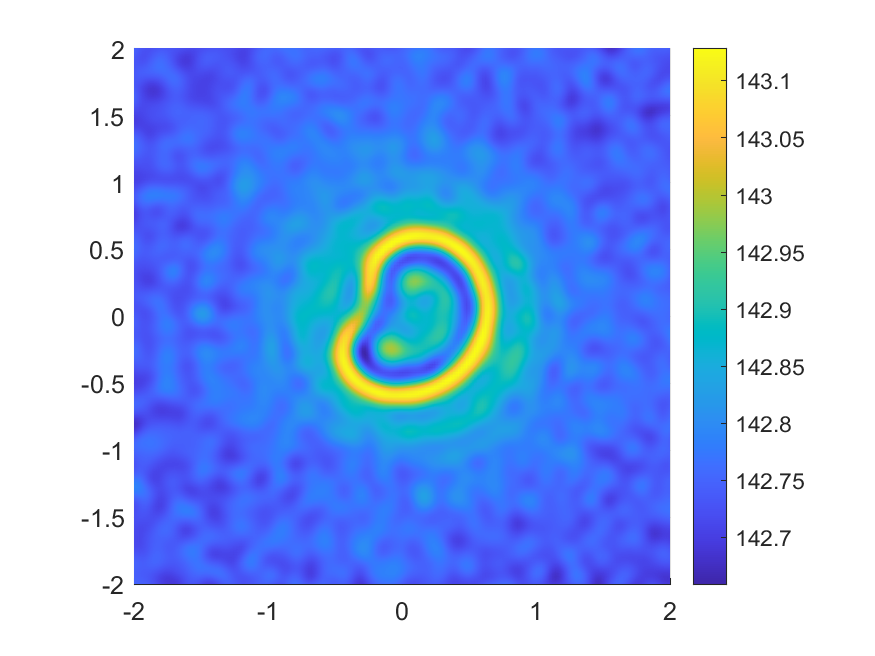}
        \vspace{-0.5em}
        \centering (f)
    \end{minipage}
    
    \caption{Reconstruction results for the Cassini-shaped rigid impenetrable scatterer obtained by the three proposed sampling methods at $\omega = 18$, $N = 32$. The first row corresponds to the noise-free case ($\delta = 0$), and the second row corresponds to data contaminated by additive noise at relative noise level $\delta = 10\%$. (a), (d): counting-based monotonicity sampling method (Algorithm~\ref{alg:discrete monotonicity}). (b), (e): single-frequency monotonicity spectral sampling method (Algorithm~\ref{alg:spectrum monotonicity}). (c), (f): multi-frequency monotonicity spectral sampling method (Algorithm~\ref{alg:multi-frequency spectrum monotonicity}). }
    \label{fig:Dirichlet cassini}
\end{figure}
\begin{figure}[htbp]
    \centering
   \begin{minipage}{0.32\textwidth}
        \centering
        \includegraphics[width=\linewidth]{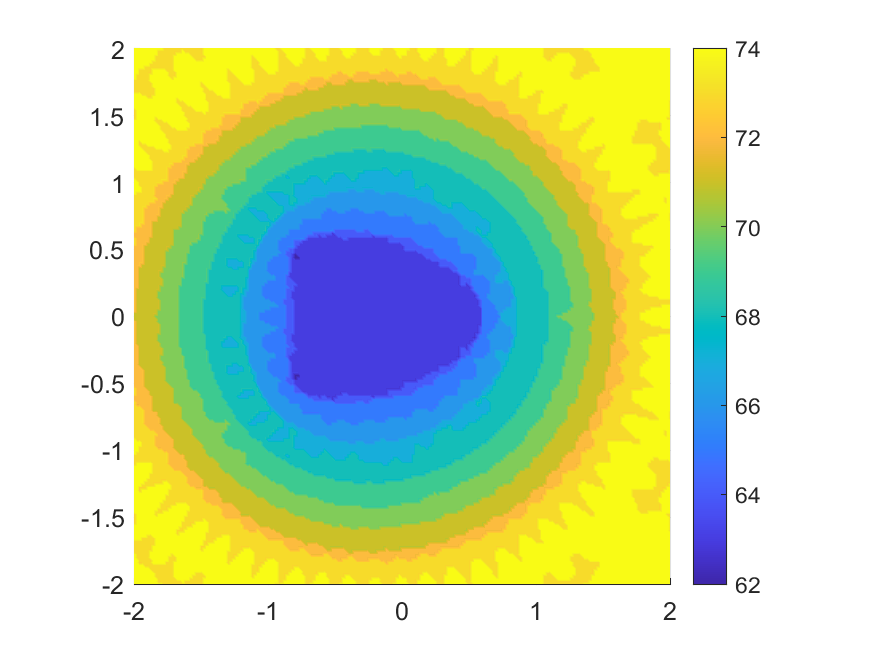}
        \vspace{-0.5em} 
        \centering (a)
    \end{minipage}
    \hfill
   \begin{minipage}{0.32\textwidth}
        \centering
        \includegraphics[width=\linewidth]{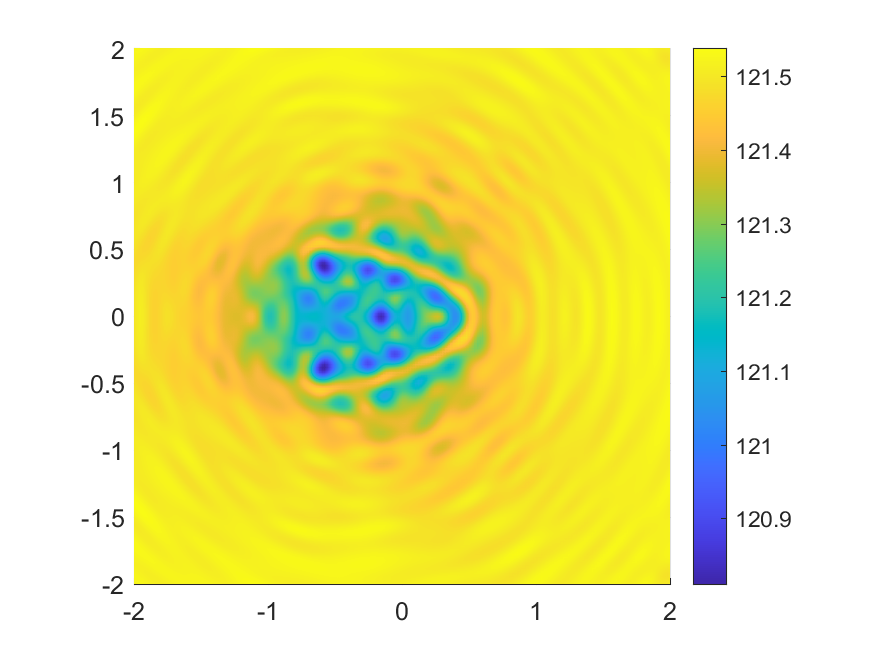}
        \vspace{-0.5em}
        \centering (b)
    \end{minipage}
    \hfill
    \begin{minipage}{0.32\textwidth}
        \centering
        \includegraphics[width=\linewidth] {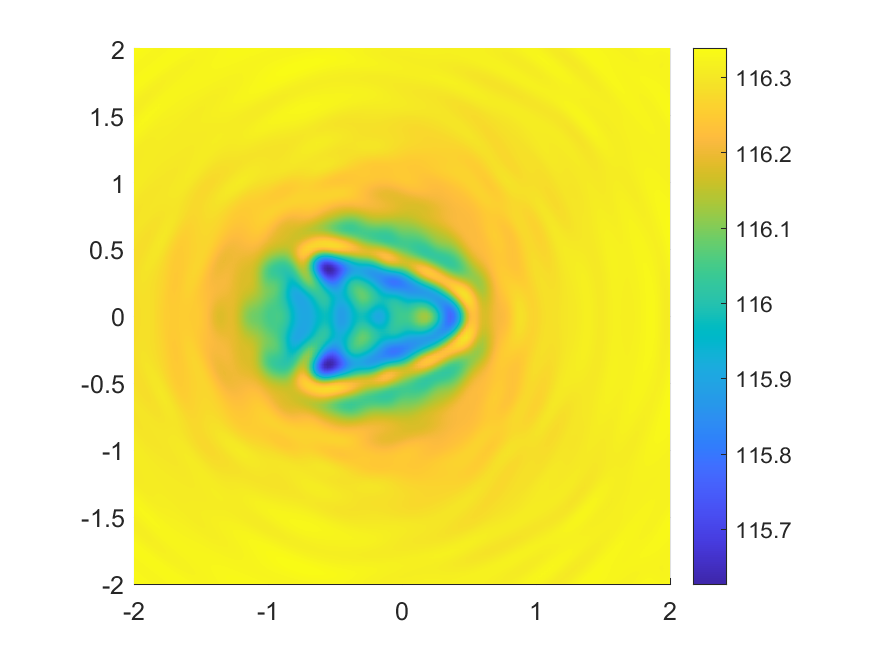}
        \vspace{-0.5em}
        \centering (c)
    \end{minipage}

    \vspace{1em}

    \begin{minipage}{0.32\textwidth}
        \centering
        \includegraphics[width=\linewidth]{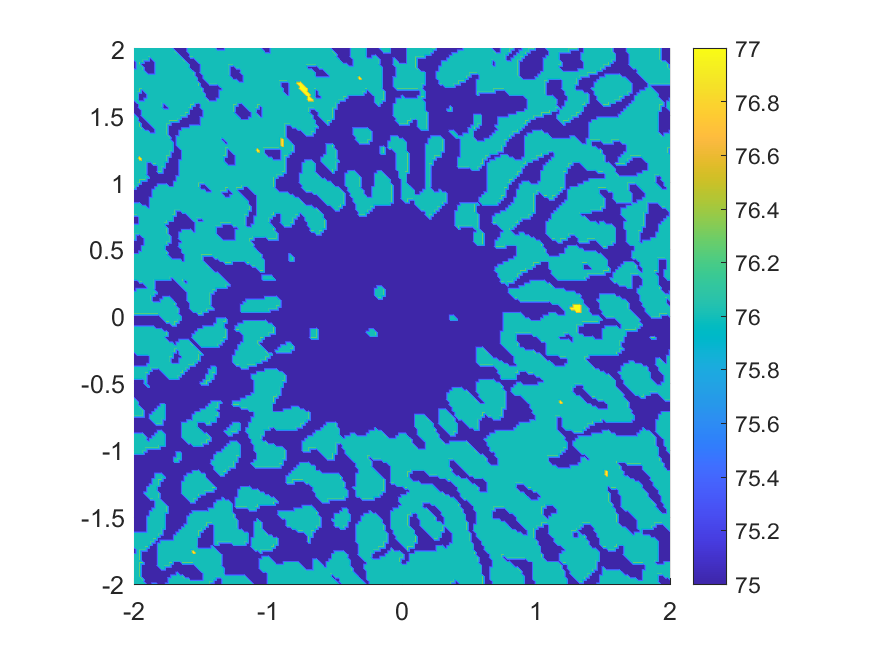}
        \vspace{-0.5em}
        \centering (d)
    \end{minipage}
    \hfill
    \begin{minipage}{0.32\textwidth}
        \centering
        \includegraphics[width=\linewidth]{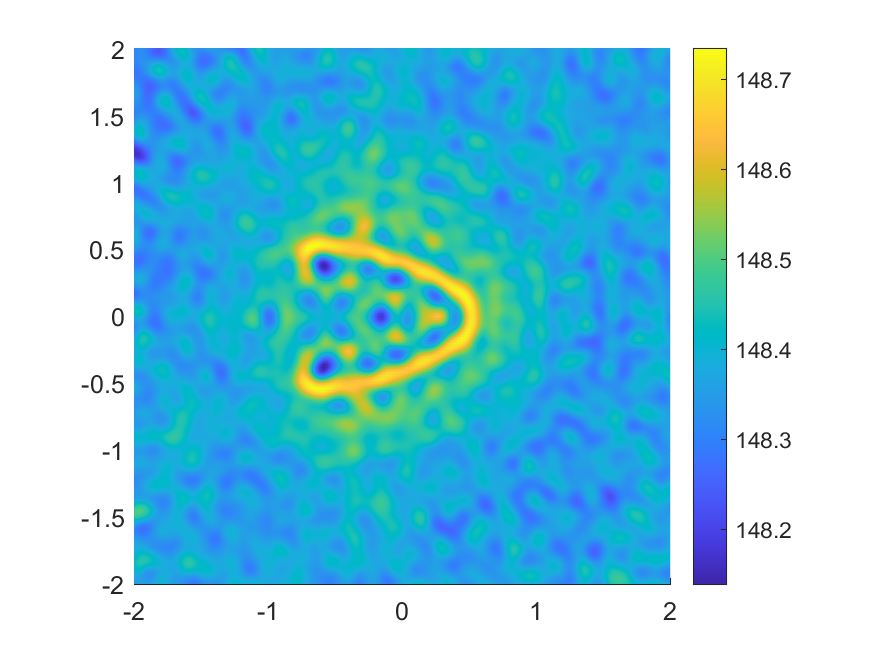}
        \vspace{-0.5em}
        \centering (e)
    \end{minipage}
    \hfill
    \begin{minipage}{0.32\textwidth}
        \centering
        \includegraphics[width=\linewidth] {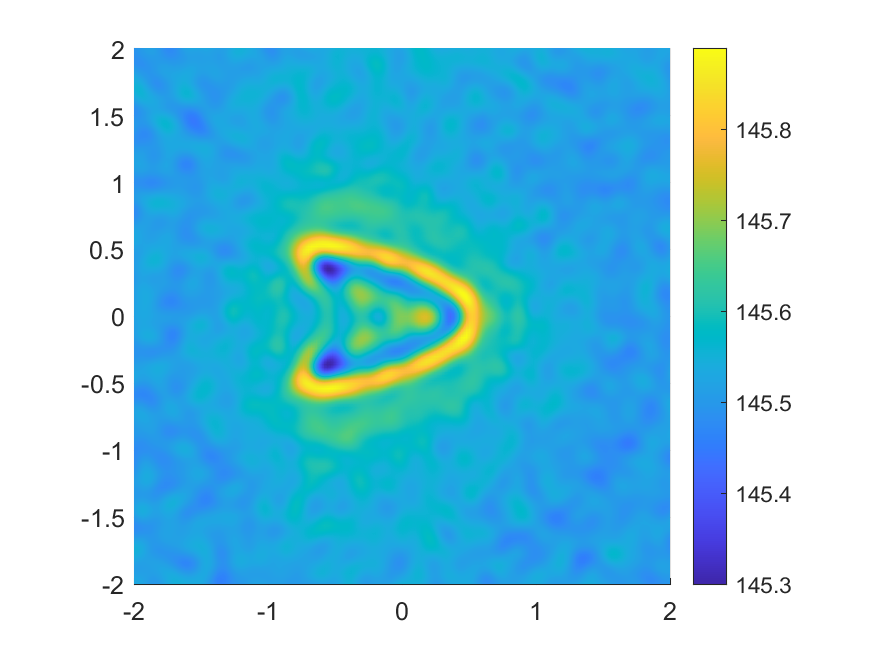}
        \vspace{-0.5em}
        \centering (f)
    \end{minipage}
    
    \caption{Reconstruction results for the kite-shaped rigid impenetrable scatterer obtained by the three proposed sampling methods at $\omega = 18$, $N = 32$. The first row corresponds to the noise-free case ($\delta = 0$), and the second row corresponds to data contaminated by additive noise at relative noise level $\delta = 10\%$. (a), (d): counting-based monotonicity sampling method (Algorithm~\ref{alg:discrete monotonicity}). (b), (e): single-frequency monotonicity spectral sampling method (Algorithm~\ref{alg:spectrum monotonicity}). (c), (f): multi-frequency monotonicity spectral sampling method (Algorithm~\ref{alg:multi-frequency spectrum monotonicity}). }
    \label{fig:Dirichlet kite}
\end{figure}

\subsection{Numerical examples for traction-free impenetrable scatterers}

In this subsection, we present numerical experiments for the case in which $D$ is a traction-free impenetrable scatterer governed by the Neumann boundary condition~\eqref{eq:NeumannBC}, in order to verify the effectiveness of the three proposed algorithms, namely, Algorithm~\ref{alg:discrete monotonicity}, Algorithm~\ref{alg:spectrum monotonicity}, and Algorithm~\ref{alg:multi-frequency spectrum monotonicity}, in reconstructing such scatterers.
In Example~\ref{ex:Neumann discrete}, we consider a kite-shaped scatterer and a three-leaf-shaped scatterer, and examine the reconstruction performance at different frequencies as well as the numerical stability of the counting-based monotonicity sampling method (Algorithm~\ref{alg:discrete monotonicity}). 
In Example~\ref{ex:Neumann spectrum}, we further compare the reconstruction performance and noise robustness of all three proposed algorithms on the Cassini-shaped and kite-shaped scatterers.

\begin{example}\label{ex:Neumann discrete}
We assume that $D$ is a traction-free impenetrable scatterer embedded in a homogeneous isotropic background medium with Lam\'{e} parameters $\lambda = 5$ and $\mu = 5$. A uniform rectangular sampling grid $\mathcal{N}_h$ with step size $h = 0.01$ over $[-2,2]^2$ yields $401\times 401$ grid points. The far-field matrix $\mathbb{F}^n$ is computed via the Nystr\"{o}m method with $N = 32$ incident and observation directions. The threshold parameter is set to $\sigma = 2 \times 10^{-9}$.

We first investigate the reconstruction quality at two frequencies, $\omega = 10$ and $\omega = 30$, for the three-leaf-shaped scatterer. As shown in Figure~\ref{fig:3-leaf}, the reconstruction quality improves substantially at higher frequencies, consistent with the behavior observed for rigid scatterers, since higher frequencies better capture fine geometric details.

Figure~\ref{fig: discrete kite neumann}(a) displays the counting-based indicator $I_{count}(\mathbf{z}_{ij})$ for the kite-shaped scatterer at $\omega = 30$ in the noise-free case. As predicted by Theorem~\ref{Characterofimpenetrable scatterer2}, the indicator attains smaller values inside or near the scatterer than in the exterior domain. The lowest level set provides a sharp approximation to the true boundary.
We next corrupt $\mathbb{F}^n$ with additive Gaussian noise at relative noise level $\delta = 0.1\%$. Figure~\ref{fig: discrete kite neumann}(b) shows that the counting-based indicator becomes compromised, with visible blurring of the boundary compared to the noise-free result. This performance degradation, consistent with the rigid scatterer case, reveals the inherent sensitivity of binary counting to small eigenvalue perturbations, motivating the use of summation-based indicators $I_{sum}$ and $I_{sum}^{\mathrm{MF}}$ examined in Example~\ref{ex:Neumann spectrum}.

\end{example}

\begin{figure}[htbp]
    \centering
    \begin{minipage}{0.45\textwidth}
        \centering
        \includegraphics[width=\textwidth]{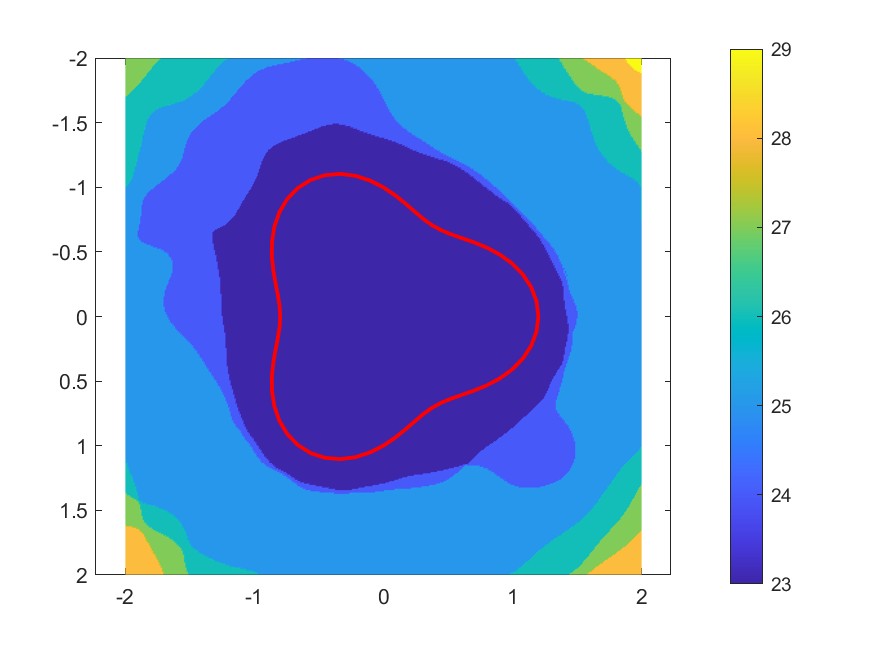}
        \vspace{-0.5em} 
        \centering (a)
    \end{minipage}
    \hfill
   \begin{minipage}{0.45\textwidth}
        \centering
        \includegraphics[width=\textwidth]{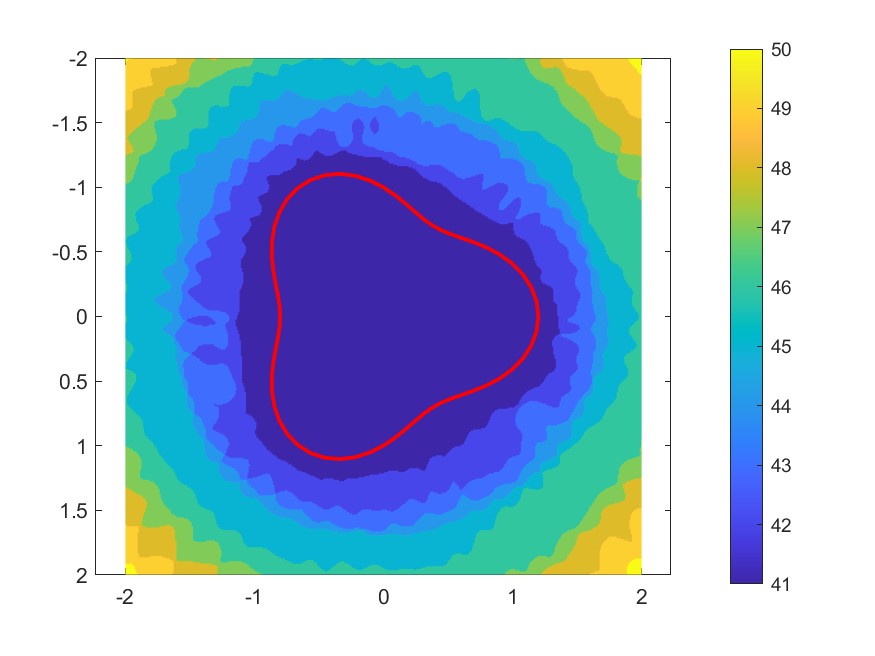}
        \vspace{-0.5em}
        \centering (b)
    \end{minipage}
\caption{Reconstruction results for the three-leaf-shaped traction-free impenetrable scatterer obtained by the counting-based monotonicity sampling method (Algorithm~\ref{alg:discrete monotonicity}), $N = 32$.	(a): $\omega = 10$.	(b): $\omega = 30$. The solid red curve denotes the true boundary  $\partial D$.}
    \label{fig:3-leaf}
\end{figure}

\begin{figure}[htbp]
    \centering
    \begin{minipage}{0.45\textwidth}
        \centering
        \includegraphics[width=\textwidth]{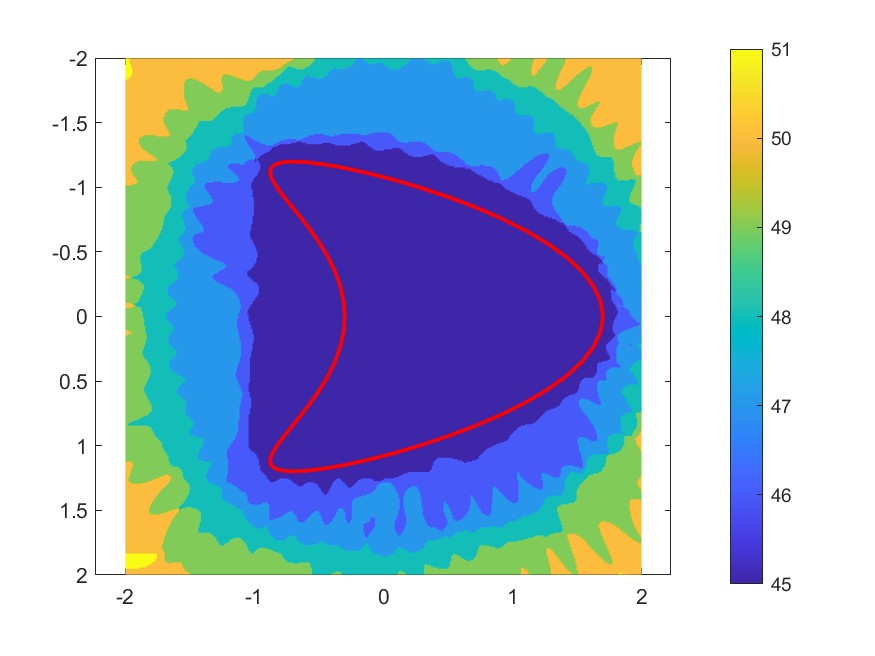}
        \vspace{-0.5em} 
        \centering (a)
    \end{minipage}
    \hfill
   \begin{minipage}{0.45\textwidth}
        \centering
        \includegraphics[width=\textwidth]{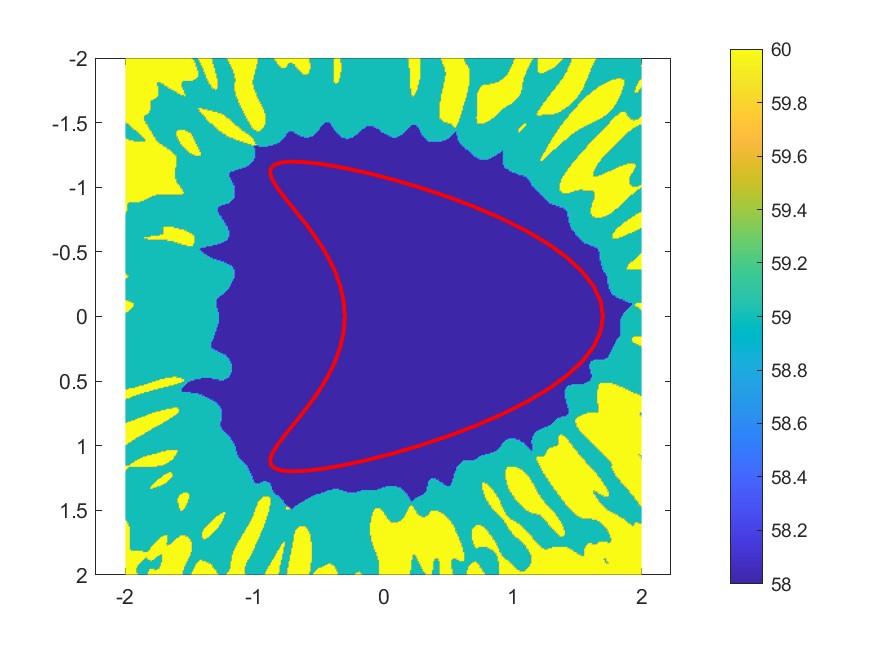}
        \vspace{-0.5em}
        \centering (b)
    \end{minipage}
\caption{Reconstruction results for the three-leaf-shaped traction-free impenetrable scatterer obtained by the counting-based monotonicity sampling method (Algorithm~\ref{alg:discrete monotonicity}), $N = 32$. (a): $\omega = 10$. (b): $\omega = 30$. The solid red curve denotes the true boundary $\partial D$. }
    \label{fig: discrete kite neumann}
\end{figure}

\begin{example}\label{ex:Neumann spectrum}
We test two traction-free impenetrable scatterers: a Cassini-shaped scatterer and a kite-shaped scatterer, which are embedded in a homogeneous isotropic background medium with Lam\'{e} parameters $\lambda = 0.2$ and $\mu = 1$. A uniform rectangular sampling grid $\mathcal{N}_h$ with step size $h = 0.02$ over $[-2,2]^2$ yields $201\times 201$ grid points. We employ $N = 32$ incident and observation directions throughout.

For the counting-based monotonicity sampling method (Algorithm~\ref{alg:discrete monotonicity}), the frequency is $\omega = 40$ with threshold $\sigma = 1\times 10^{-12}$. For the single-frequency monotonicity spectral sampling method (Algorithm~\ref{alg:spectrum monotonicity}), the frequency is also $\omega = 40$. For the multi-frequency monotonicity spectral sampling method (Algorithm~\ref{alg:multi-frequency spectrum monotonicity}), we employ four frequencies $\omega \in \{40, 42, 44, 46\}$ with uniform weights $a_\ell = 1/4$ in the indicator $I_{sum}^{\mathrm{MF}}(\mathbf{z}_{ij})$.
The three indicators $I_{count}$, $I_{sum}$, and $I_{sum}^{\mathrm{MF}}$ are computed and visualized as color-coded maps for each scatterer. Figures~\ref{fig:Neumann cassini} and \ref{fig:Neumann kite} display reconstruction results for the two scatterers, with the first row showing noise-free cases ($\delta = 0$) and the second row showing results under $\delta = 10\%$ additive noise.

In the noise-free case, all three methods successfully identify the location and overall geometry of each scatterer; however, their reconstruction qualities differ substantially. The counting-based monotonicity sampling method produces visible artifacts and yields only a rough approximation of the scatterer boundary (Figures~\ref{fig:Neumann cassini}(a) and \ref{fig:Neumann kite}(a)). This difficulty is attributed to high-curvature concave structures that generate stronger multiple scattering and diffraction effects, introducing perturbations into the far-field data. By contrast, both spectral sampling methods yield significantly cleaner reconstructions by exploiting multiscale spectral information to mitigate these scattering-induced perturbations. The single-frequency indicator $I_{sum}$ concentrates its values near the true boundary $\partial D$ (Figures~\ref{fig:Neumann cassini}(b) and \ref{fig:Neumann kite}(b)), whereas the multi-frequency indicator $I_{sum}^{\mathrm{MF}}$ produces the sharpest and most localized reconstruction by further suppressing artifacts and concentrating its largest values precisely at the boundary (Figures~\ref{fig:Neumann cassini}(c) and \ref{fig:Neumann kite}(c)). This improvement is attributed to multiscale aggregation of spectral information across frequencies, enabling effective resolution of features that challenge the counting-based approach, particularly for geometrically complex boundaries with varying curvature.

Under $\delta = 10\%$ additive noise, the differences in robustness among the three methods become pronounced. The counting-based method deteriorates severely, with noise-induced artifacts dominating the reconstruction (Figures~\ref{fig:Neumann cassini}(d) and \ref{fig:Neumann kite}(d)). The single-frequency spectral method is considerably more stable, preserving main geometric features despite noticeable blurring (Figures~\ref{fig:Neumann cassini}(e) and \ref{fig:Neumann kite}(e)). The multi-frequency method exhibits the strongest robustness, maintaining clear boundary localization and accurate shape recovery even under substantial noise contamination (Figures~\ref{fig:Neumann cassini}(f) and \ref{fig:Neumann kite}(f)). These results demonstrate that the spectral methods' advantage over counting-based approaches is particularly pronounced for geometrically complex scatterers with concave boundaries, where the multiscale aggregation effectively resolves features that challenge the binary counting mechanism.

Among the three algorithms, the multi-frequency monotonicity spectral sampling method (Algorithm~\ref{alg:multi-frequency spectrum monotonicity}) achieves the best overall performance in all tested scenarios. In particular, for the Cassini-shaped scatterer, the reconstructed boundary in Figure~\ref{fig:Neumann cassini}(c) nearly coincides with the true boundary $\partial D$. Even at a noise level of $\delta = 10\%$, the multi-frequency method reliably distinguishes the concave boundary features (see Figures~\ref{fig:Neumann cassini} (f) and~\ref{fig:Neumann kite}(f)), whereas the counting-based and single-frequency methods deteriorate significantly under the same conditions (see Figures~\ref{fig:Neumann cassini}(d), \ref{fig:Neumann cassini}(e), \ref{fig:Neumann kite}(d) and \ref{fig:Neumann kite}(e)). Overall, the numerical results confirm that the counting-based monotonicity sampling method is directly derived from Theorem \ref{Characterofimpenetrable scatterer2}. but highly sensitive to noise; the single-frequency monotonicity spectral sampling method provides a marked improvement in numerical stability and reconstruction accuracy; and the multi-frequency monotonicity spectral sampling method consistently delivers the most accurate and noise-robust reconstructions across all tested scatterer geometries and noise levels.
\end{example}

\begin{figure}[htbp]
    \centering
    
    \begin{minipage}{0.32\textwidth}
        \centering
        \includegraphics[width=\linewidth]{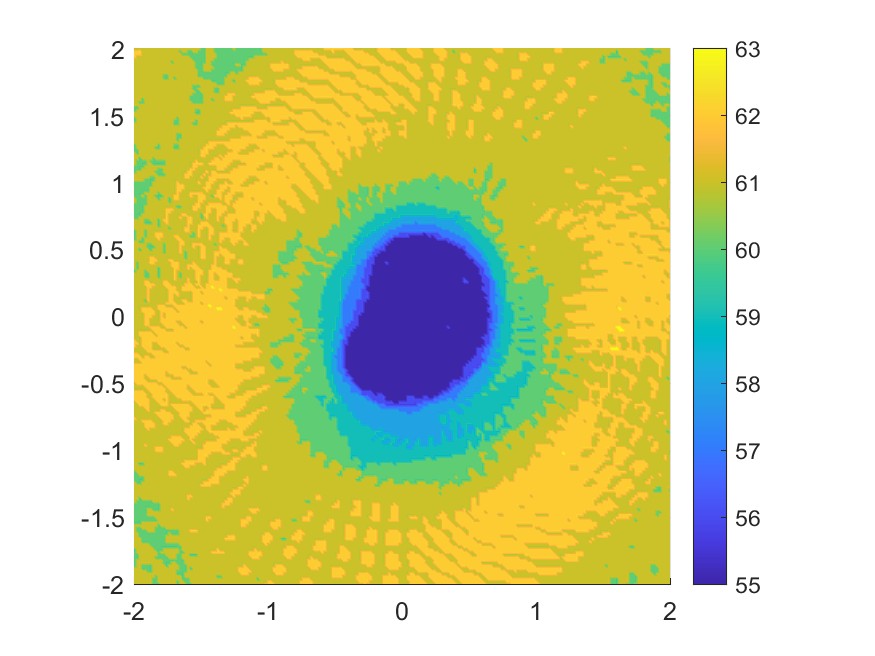}
        \vspace{-0.5em} 
        \centering (a)
    \end{minipage}
    \hfill
   \begin{minipage}{0.32\textwidth}
        \centering
        \includegraphics[width=\linewidth]{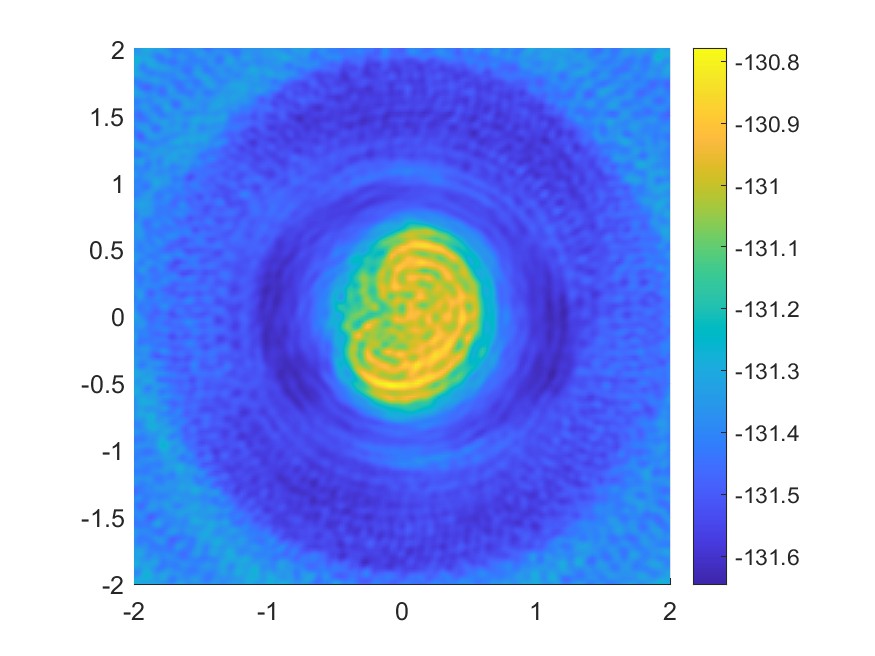}
        \vspace{-0.5em}
        \centering (b)
    \end{minipage}
    \hfill
    \begin{minipage}{0.32\textwidth}
        \centering
        \includegraphics[width=\linewidth]{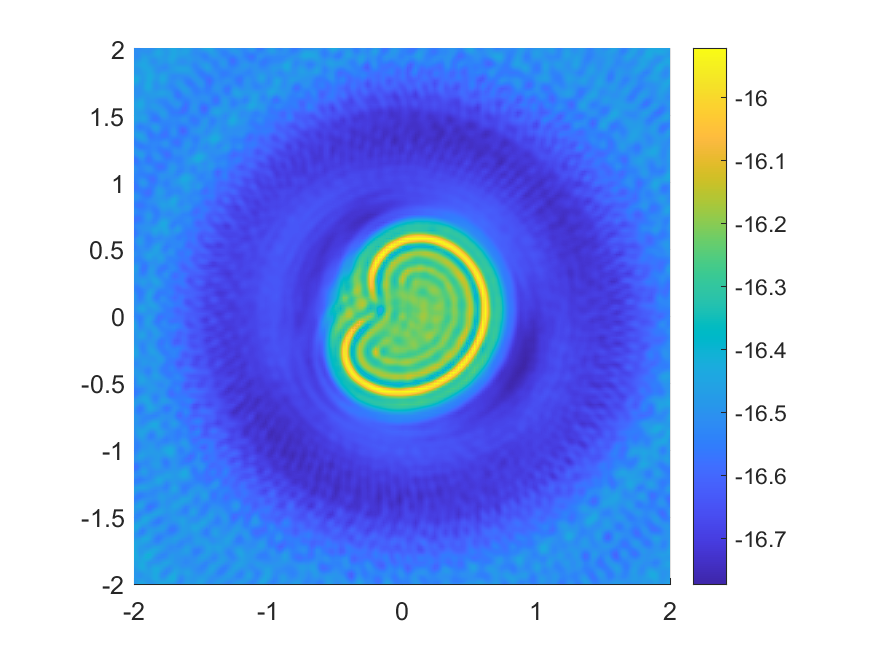}
        \vspace{-0.5em}
        \centering (c)
    \end{minipage}

    \vspace{1em}

    \begin{minipage}{0.32\textwidth}
        \centering
        \includegraphics[width=\linewidth]{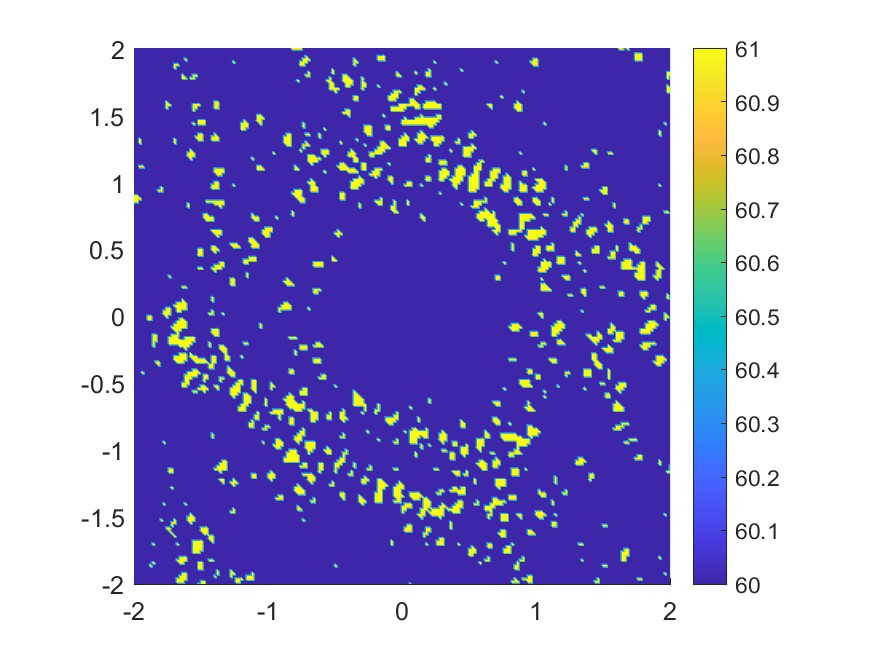}
        \vspace{-0.5em}
        \centering (d)
    \end{minipage}
    \hfill
    \begin{minipage}{0.32\textwidth}
        \centering
        \includegraphics[width=\linewidth]{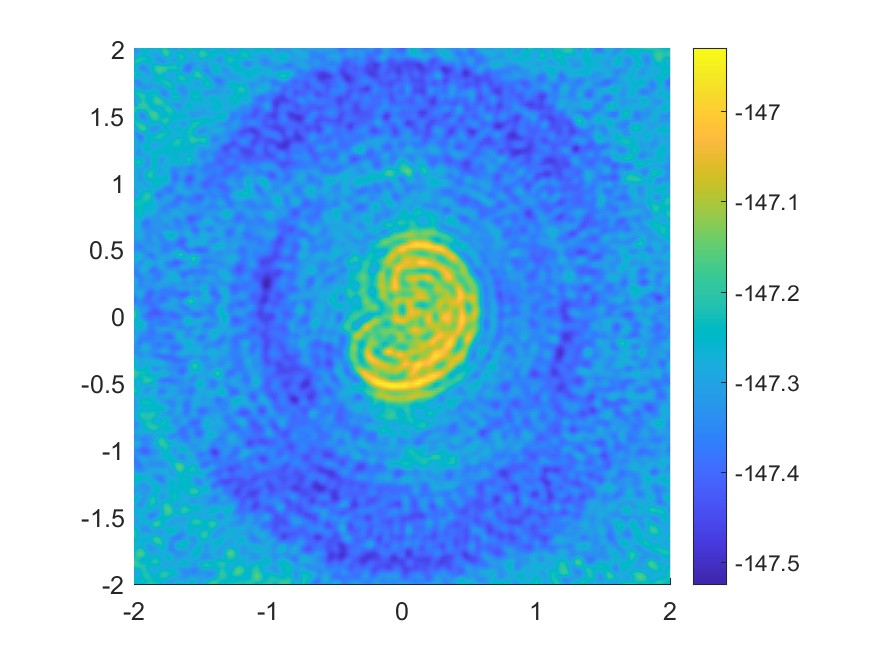}
        \vspace{-0.5em}
        \centering (e)
    \end{minipage}
    \hfill
    \begin{minipage}{0.32\textwidth}
        \centering
        \includegraphics[width=\linewidth]{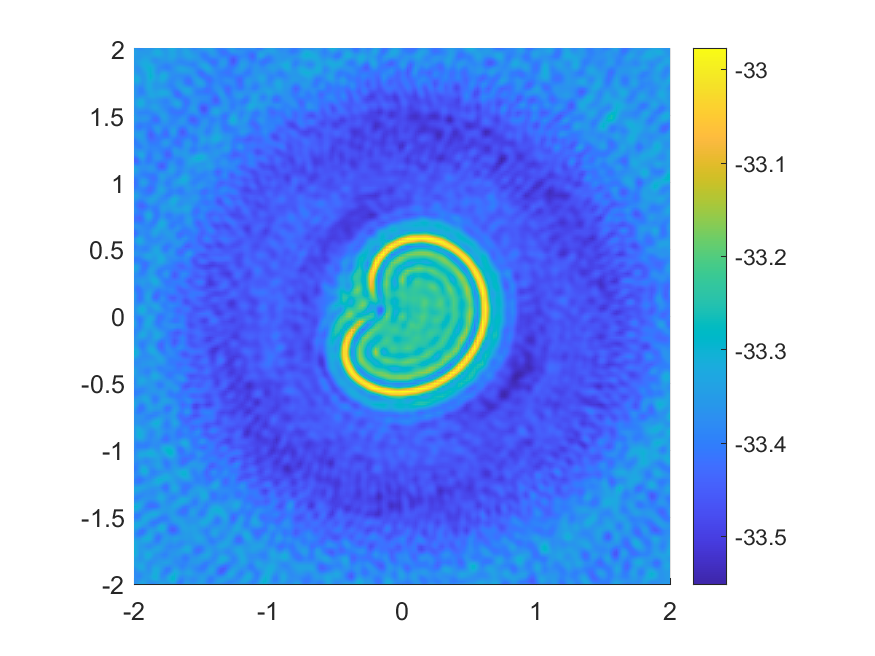}
        \vspace{-0.5em}
        \centering (f)
    \end{minipage}
    \caption{ Reconstruction results for the Cassini-shaped traction-free impenetrable scatterer obtained by the three proposed sampling methods at $\omega = 40$, $N = 32$. The first row corresponds to the noise-free case ($\delta = 0$), and the second row corresponds to data contaminated by additive noise at relative noise level $\delta = 10\%$. (a), (d): counting-based monotonicity sampling method (Algorithm~\ref{alg:discrete monotonicity}). (b), (e): single-frequency monotonicity spectral sampling method (Algorithm~\ref{alg:spectrum monotonicity}). (c), (f): multi-frequency monotonicity spectral sampling method (Algorithm~\ref{alg:multi-frequency spectrum monotonicity}). }
    \label{fig:Neumann cassini}
\end{figure}
\begin{figure}[htbp]
    \centering
    
    \begin{minipage}{0.32\textwidth}
        \centering
        \includegraphics[width=\linewidth]{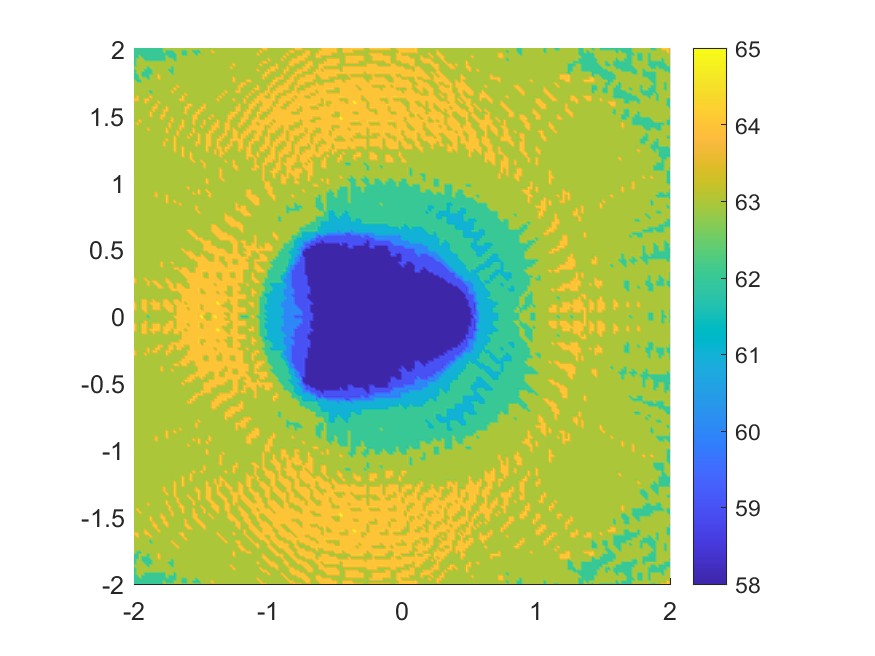}
        \vspace{-0.5em} 
        \centering (a)
    \end{minipage}
    \hfill
   \begin{minipage}{0.32\textwidth}
        \centering
        \includegraphics[width=\linewidth]{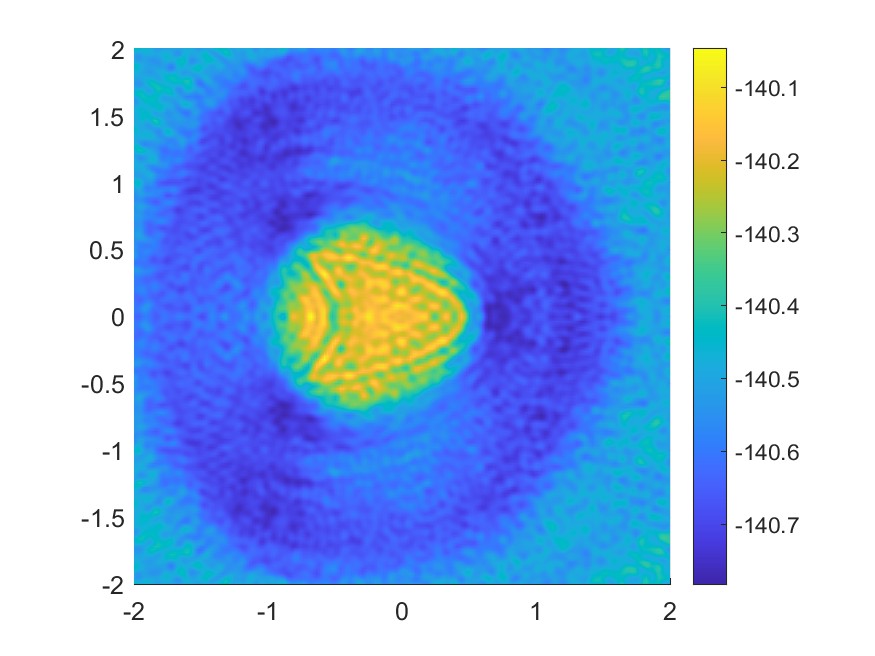}
        \vspace{-0.5em}
        \centering (b)
    \end{minipage}
    \hfill
    \begin{minipage}{0.32\textwidth}
        \centering
        \includegraphics[width=\linewidth]{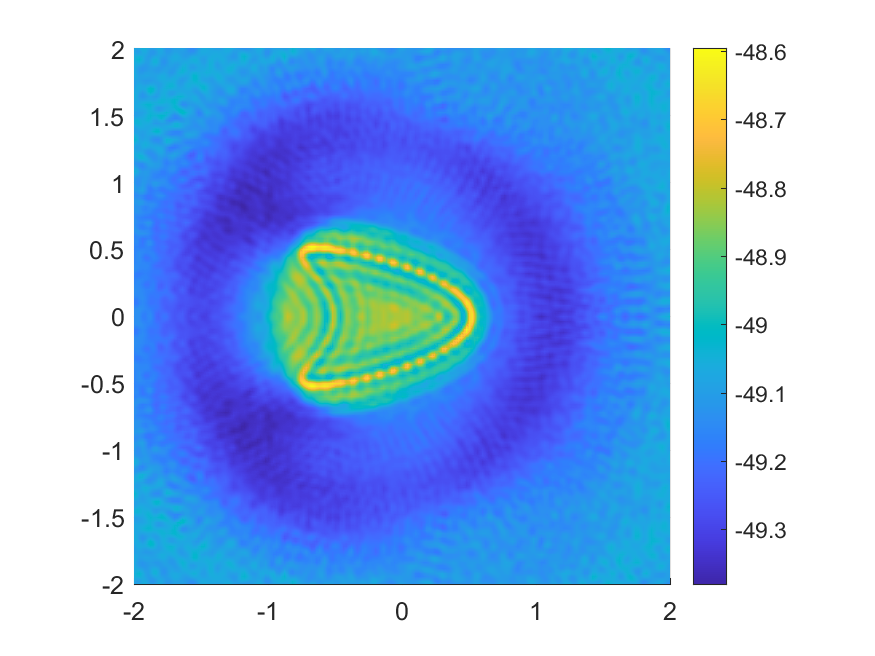}
        \vspace{-0.5em}
        \centering (c)
    \end{minipage}

    \vspace{1em}

    \begin{minipage}{0.32\textwidth}
        \centering
        \includegraphics[width=\linewidth]{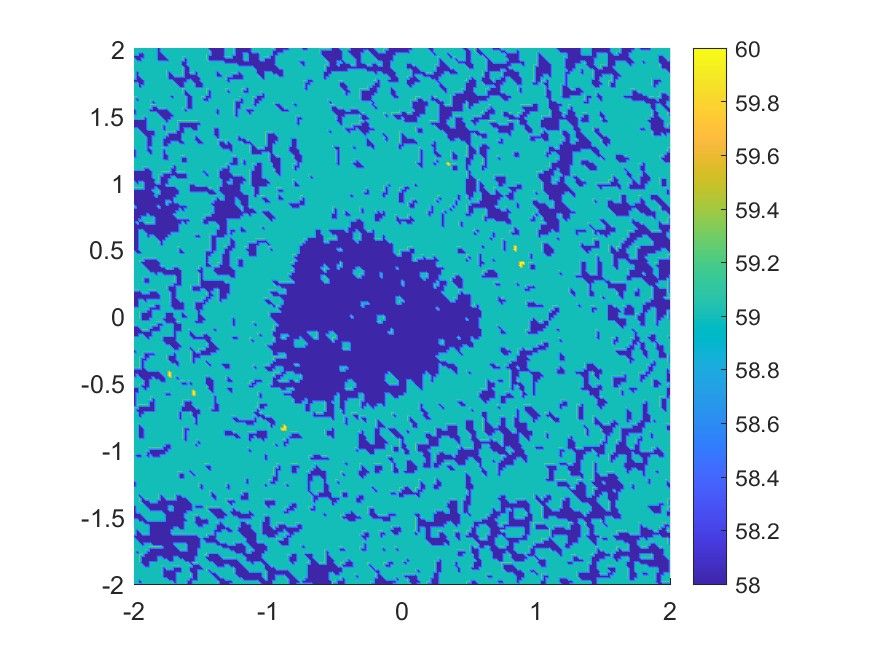}
        \vspace{-0.5em}
        \centering (d)
    \end{minipage}
    \hfill
    \begin{minipage}{0.32\textwidth}
        \centering
        \includegraphics[width=\linewidth]{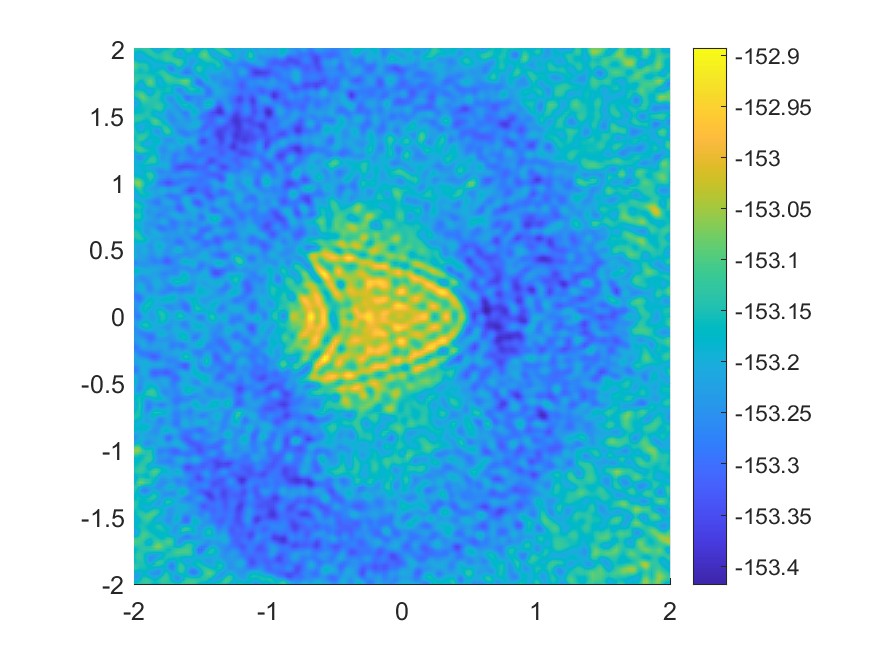}
        \vspace{-0.5em}
        \centering (e)
    \end{minipage}
    \hfill
    \begin{minipage}{0.32\textwidth}
        \centering
        \includegraphics[width=\linewidth]{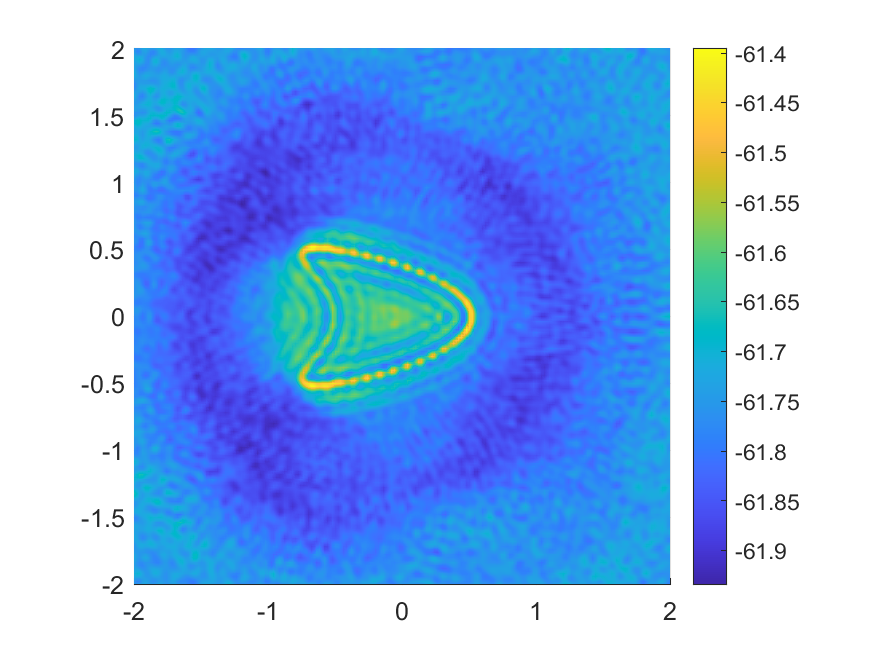}
        \vspace{-0.5em}
        \centering (f)
    \end{minipage}
    \caption{Reconstruction results for the kite-shaped traction-free impenetrable scatterer obtained by the three proposed sampling methods at $\omega = 40$, $N = 32$. The first row corresponds to the noise-free case ($\delta = 0$), and the second row corresponds to data contaminated by additive noise at relative noise level $\delta = 10\%$. (a), (d): counting-based monotonicity sampling method (Algorithm~\ref{alg:discrete monotonicity}). (b), (e): single-frequency monotonicity spectral sampling method (Algorithm~\ref{alg:spectrum monotonicity}). (c), (f): multi-frequency monotonicity spectral sampling method (Algorithm~\ref{alg:multi-frequency spectrum monotonicity}). }
    \label{fig:Neumann kite}
\end{figure}

In numerical examples presented above, the counting-based monotonicity sampling method (Algorithm~\ref{alg:discrete monotonicity}) produces an indicator $I_{count}$ that attains its smallest values inside and near the impenetrable scatterer. By contrast, the single-frequency and multi-frequency monotonicity spectral sampling methods (Algorithms~\ref{alg:spectrum monotonicity} and~\ref{alg:multi-frequency spectrum monotonicity}) yield indicators $I_{sum}$ and $I_{sum}^{\mathrm{MF}}$ that enable a more accurate identification of the shape of the scatterer. In particular, even in the presence of noise, the regions of the largest indicator values in the reconstructed images still delineate the true scatterer $D$ with satisfactory accuracy.

Furthermore, the newly proposed monotonicity spectral sampling methods (Algorithms~\ref{alg:spectrum monotonicity} and~\ref{alg:multi-frequency spectrum monotonicity}) demonstrate not only significantly greater numerical robustness with respect to noise than the counting-based monotonicity sampling method (Algorithm~\ref{alg:discrete monotonicity}), but also higher accuracy in reconstructing concave parts of the scatterer boundary. These advantages confirm that the spectral summation-based indicators $I_{sum}$ and $I_{sum}^{\mathrm{MF}}$ constitute effective and practically reliable improvements over the classical counting-based criterion $I_{count}$ within the monotonicity-based reconstruction framework.

\section*{Acknowledgment}
The work of H. Diao is supported by National Natural Science Foundation of China  (No. 12371422), 2025 National Foreign Experts Program (Grant No. D20250157) and the Fundamental Research Funds for the Central Universities, JLU.

\end{document}